\documentclass{amsart}

\usepackage{microtype}
\usepackage{fullpage}
\usepackage{bbm}
\usepackage{amsmath}
\usepackage{amssymb}
\usepackage{mathtools}
\usepackage{amsthm}
\usepackage{color}
\usepackage{soul}
\usepackage{float}
\usepackage{tikz}
\usepackage{xfrac}
\usepackage{tikz-cd}
\usetikzlibrary{trees}
\usetikzlibrary[shapes]
\usetikzlibrary[arrows]
\usetikzlibrary{patterns}
\usetikzlibrary{fadings}
\usetikzlibrary{backgrounds}
\usetikzlibrary{decorations.pathreplacing}
\usetikzlibrary{decorations.pathmorphing}
\usetikzlibrary{positioning}

\newtheorem{ThmAlpha}{Theorem}

\newtheorem{thm}{Theorem}[section]
\newtheorem{theorem}[thm]{Theorem}
\newtheorem{corollary}[thm]{Corollary}
\newtheorem{proposition}[thm]{Proposition}
\newtheorem{lemma}[thm]{Lemma}

\newtheorem*{theorem*}{Theorem}

\newtheorem*{conjecture*}{Conjecture}

\theoremstyle{definition}
\newtheorem{definition}[thm]{Definition}
\newtheorem{example}[thm]{Example}

\newtheorem{construction}[thm]{Construction}

\theoremstyle{remark}
\newtheorem{remark}[thm]{Remark}
 
\newtheorem*{convention*}{Convention}

\newcommand{\SpecR}{\mathrm{Spec}(R)}
\newcommand{\SpecZ}{\mathrm{Spec}(\mathbb{Z})}
\newcommand{\SpcK}{\mathrm{Spc}(\mathcal{K})}
\newcommand{\SpcKinv}{\mathrm{Spc}(\mathcal{K})^\vee}
\newcommand{\SpcDZinv}{\mathrm{Spc}(\mathsf{D}^{\mathrm{perf}}(\mathbb{Z}))^\vee}
\newcommand{\SpcEpinv}{\mathrm{Spc}(\mathsf{D}(\mathcal{E}_p)^\omega)^\vee}
\newcommand{\SpcEp}{\mathrm{Spc}(\mathsf{D}(\mathcal{E}_p)^\omega)}
\newcommand{\DZ}{\mathsf{D}^{\mathrm{perf}}(\mathbb{Z})}
\newcommand{\DZp}{\mathsf{D}_{\{p\}}^{\mathrm{perf}}(\mathbb{Z})}
\newcommand{\DQ}{\mathsf{D}^{\mathrm{perf}}(\mathbb{Q})}
\newcommand{\Ep}{\mathsf{D}(\mathcal{E}_p)^\omega}
\newcommand{\SHCp}{\mathcal{SH}_{(p)}^\omega}

\newcommand{\Thick}{\mathrm{thick}^{\otimes}}
\newcommand{\ThickK}{\mathrm{Th}^{\sqrt{\otimes}}(\mathcal{K})}
\newcommand{\Ridl}{\mathrm{RIdl}(R)}
\newcommand{\AnnR}{\mathrm{Ann}(R)}
\newcommand{\RoSpecR}{\Omega^{\mathrm{reg}}(\SpecR)}
\newcommand{\K}{\mathcal{K}}
\newcommand{\I}{\mathcal{I}}
\newcommand{\J}{\mathcal{J}}
\renewcommand{\P}{\mathcal{P}}
\newcommand{\E}{\mathcal{E}}
\newcommand{\C}{\mathcal{C}}
\newcommand{\D}{\mathcal{D}}
\newcommand{\A}{\mathcal{A}}
\newcommand{\oo}{\mathcal{O}}

\newcommand{\colim}{\mathop{\mathrm{colim}}}
\newcommand{\supp}{\mathop{\mathrm{supp}}}

\usepackage{hyperref}
\usepackage[nameinlink,capitalise,noabbrev]{cleveref}

\usepackage[sorting=nyt,citestyle=alphabetic,bibstyle=alphabetic,maxalphanames=5,maxbibnames=5,doi=false,isbn=false,url=false]{biblatex}
\title{De Morgan's Laws in Tensor-Triangular Geometry}
\author{Mark Lyttle}
\date{August 26, 2026.}

\begin{document}

\begin{abstract}
    There is an isomorphism between the frame of radical thick tensor ideals of an essentially small tt-category and the frame of open sets of the Hochster dual of its Balmer spectrum. This isomorphism allows us to compare topological properties of the latter with tt-geometric properties of the former. We use this to give an intrinsic definition of those tt-categories which satisfy a variant of the second De Morgan law, and furthermore give a construction for producing such tt-categories.
\end{abstract}

\maketitle

\section{Introduction}

Given a topological space $X$, there are many properties which $X$ may satisfy, for example, $X$ may be Noetherian, Hausdorff, and/or connected. In some cases, these properties are determined by the underlying frame of open sets of $X$, denoted by $\Omega(X)$, where the join of open sets is given by their union and the meet given by the interior of their intersection.

When considering the (spatial) frame $\Omega(X)$, we would like to be able to recover the original space $X$. To do this, we require $X$ to be a sober space, as we can then make use of Stone Duality (see Section \ref{SectionTopologicalDeMorganlaws}). In this case, $\Omega(X)$ often has additional meaning. The most prominent example of this is the frame of radical ideals of a commutative ring $R$, $\Ridl$. This is a spatial frame, and the sober space obtained by Stone duality is the Zariski spectrum of $R$, $\SpecR$. We therefore have an isomorphism
\begin{equation*}
    \Ridl\cong\Omega(\SpecR).
\end{equation*}
This isomorphism enables one to pass from topological properties of the space $\SpecR$ to algebraic properties of the ring $R$:

\begin{center}
    \begin{tikzpicture}[yscale=0.75]
\node at (0,2) {Topological property of $\SpecR$};
\node at (0,0) {Frame-theoretic property of $\Omega(\SpecR)\cong\Ridl$};
\node at (0,-2) {Algebraic property of $R$};
\draw [<->]  (0,1.7) -- (0,0.3);
\draw [<->]  (0,-0.3) -- (0,-1.7);
\end{tikzpicture}
\end{center}

There are many well known examples of this pipeline in use, for example, a commutative ring $R$ splits as a direct sum of subrings if and only if its spectrum splits into clopen pieces. Some less obvious examples come from \cite{ST10}, where the authors specify a certain topological property and ask whether or not the class of commutative rings whose Zariski spectrum satisfies this property is first order-axiomatizable. Their results are summarised concisely in \cite[Section 11]{ST10}.

In this paper we will focus on the case addressed in \cite{NR85}. There, the authors investigate those commutative rings whose Zariski spectra are extremally disconnected (the closure of every open subset is open)\footnote{Note that we do not assume our spaces are necessarily Hausdorff.}. They prove that in the case of $R$ being a reduced ring, $\SpecR$ is extremally disconnected if and only if the ideals of $R$ satisfy an algebraic analogue of the second De Morgan law. Niefield and Rosenthal in fact give an intrinsic definition of those reduced commutative rings which satisfy these equivalent conditions. They are the commutative Baer rings, which will be looked at further in Section \ref{SectionBaerRings}.

A tensor-triangulated category $\K$ can be thought of as a categorification of a commutative ring. When $\K$ is essentially small, its frame of radical thick tensor ideals $\ThickK$ is again a spatial frame. Stone duality therefore gives us a sober space whose frame of open sets is isomorphic to $\ThickK$, which is in fact the Hochster dual of the Balmer spectrum of $\K$, $\SpcKinv$ \cite{KP17}. With this in hand, we can establish a tt-geometric pipeline for passing from topological properties of the Balmer spectrum to intrinsic properties of the tt-category, much like the commutative algebra case:

\begin{center}
    \begin{tikzpicture}[yscale=0.75]
\node at (0,2) {Topological property of $\SpcKinv$};
\node at (0,0) {Frame-theoretic property of $\Omega(\SpcKinv)\cong\ThickK$};
\node at (0,-2) {tt-Geometric property of $\K$};
\draw [<->]  (0,1.7) -- (0,0.3);
\draw [<->]  (0,-0.3) -- (0,-1.7);
\end{tikzpicture}
\end{center}

We aim to use the above framework to establish De Morgan laws for tt-categories, and in particular to give an intrinsic definition of those essentially small rigid tt-categories which satisfy the second De Morgan law, and thus have $\SpcKinv$ extremally disconnected.

\subsection*{Main Results}

We begin Section \ref{SectionttDML} by deriving tt-De Morgan laws for an arbitrary essentially small tt-category $\K$, where the pseudocomplement of a tt-ideal $\I$ is given by $\I^*=\{k\in\K~|~k\otimes\I\simeq0\}$.

\begin{enumerate}
        \item $\K$ satisfies the first tt-De Morgan law if $\Thick(\mathcal{I},\mathcal{J})^* = \mathcal{I}^*\cap\mathcal{J}^*$ for all tt-ideals $\mathcal{I},\mathcal{J}\subseteq\mathcal{K}$.
        \item $\K$ satisfies the second tt-De Morgan law if $(\mathcal{I}\cap\mathcal{J})^* = \Thick(\mathcal{I}^*,\mathcal{J}^*)$ for all tt-ideals $\mathcal{I},\mathcal{J}\subseteq\mathcal{K}$.
    \end{enumerate}

We then prove that an essentially small rigid tt-category satisfies the second tt-De Morgan Law if and only if it is a Baer tt-category, which we define in an analogous way to commutative Baer rings (see Definition \ref{DefBaertt}).

\begin{ThmAlpha}
    (Corollary \ref{CorollaryBaerttED}) Let $\K$ be an essentially small rigid tt-category. Then the following are equivalent:
    \begin{enumerate}
        \item $\K$ is a Baer tt-category.
        \item $\K$ satisfies the second tt-De Morgan law.
        \item $\SpcKinv$ is extremally disconnected.
    \end{enumerate}
\end{ThmAlpha}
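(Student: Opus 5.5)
The plan is to pass through the frame isomorphism $\ThickK\cong\Omega(\SpcKinv)$ of \cite{KP17}, prove the equivalence (2)$\Leftrightarrow$(3) purely frame-theoretically, and then use rigidity to prove (1)$\Leftrightarrow$(2).

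\textbf{Step 1: frame-theoretic reformulation.} In any frame $L$, the pseudocomplement is $a^*=\bigvee\{b : b\wedge a=0\}$. I would first check that in $\ThickK$ this is the ideal $\I^*=\{k : k\otimes\I\simeq 0\}$. For this I need that the meet is $\I\cap\J$ and that $\I\cap\J=0$ iff $\I\otimes\J=0$. The first is immediate. The second uses rigidity: every tt-ideal of a rigid category is radical, and $\I\cap\J=\sqrt{\I\otimes\J}$, so $k\otimes k'\simeq 0$ for $k\in\I$, $k'\in\J$ exactly when $\I\cap\J=0$. With this, the second tt-De Morgan law is precisely the frame identity $(a\wedge b)^*=a^*\vee b^*$.

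\textbf{Step 2: (2)$\Leftrightarrow$(3).} For a space $X$ and frame $\Omega(X)$, we have $U^*=\operatorname{int}(X\setminus U)$. It is classical (Johnstone, \emph{Stone Spaces}) that the following are equivalent:
\begin{itemize}
\item $(U\cap V)^*=U^*\cup V^*$ for all open $U,V$;
\item $a^*\vee a^{**}=1$ for all $a$;
\item $X$ is extremally disconnected.
\end{itemize}
The last two match because $U^{**}=\operatorname{int}\overline{U}$ and $U^*=X\setminus\overline U$. So $U^*\cup U^{**}=X$ iff $\operatorname{int}\overline U\supseteq\overline U$, i.e.\ iff $\overline U$ is open.

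For the implication from the first condition to the second, take $V=U^*$: then $(U\cap U^*)^*=0^*=1$. For the converse, one uses $a^*\vee a^{**}=1$ together with the distributive law to show that $(a\wedge b)^*\le a^*\vee b^*$. I would reproduce this short argument rather than cite it, since spatiality makes it concrete.

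\textbf{Step 3: (1)$\Leftrightarrow$(2) via Baer.} I expect Definition \ref{DefBaertt} to say that every annihilator ideal $\I^*$ is generated by a single object whose "complement" idempotent-like behaviour mimics $eR$. More precisely, I expect it to say that $\I^*=\Thick(a)$ with $\I^{**}=\Thick(b)$ and $\Thick(a,b)=\K$, or that $\I^*\vee\I^{**}=\K$.

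\begin{itemize}
\item \emph{Baer $\Rightarrow$ (2).} The Baer condition gives $\I^*\vee\I^{**}=\K$, which is the Stone identity; by Step 2 this gives the second law.
\item \emph{(2) $\Rightarrow$ Baer.} The Stone identity yields $\Thick(\I^*,\I^{**})=\K$. Since the unit is compact, $\mathbb 1\in\Thick(x,y)$ for some $x\in\I^*$ and $y\in\I^{**}$. Then $\I^*$ and $\I^{**}$ are complementary in the frame, i.e.\ they form a clopen pair in $\SpcKinv$. A complemented element of $\ThickK$ is compact, so it corresponds to a quasi-compact open set, and is therefore finitely generated, i.e.\ principal.
\end{itemize}

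The main obstacle is this last matching with the precise Baer definition. This includes the compactness argument (complemented elements of a coherent frame are compact) and verifying that rigidity is what makes $\I^*$ an honest tt-ideal computing the frame pseudocomplement. I would check those points first.
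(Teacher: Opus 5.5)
The gap is in (2)$\Rightarrow$(1). Compactness of complemented elements only gives you $\I^*=\Thick(a)$ for a single object $a$. Definition \ref{DefBaertt} asks for more: a generator $e$ that sits in an idempotent triangle $e\to\mathbbm{1}\to f\to\Sigma e$ with $e\otimes f\simeq 0$. A principal generator need not have this form, since $e\oplus\Sigma e$ generates the same ideal as $e$, and nothing in your argument extracts an idempotent from $a$.

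The missing idea is to split the unit along the clopen decomposition $\SpcKinv=\supp(\I^*)\sqcup\supp(\I^{**})$, which you already have. The paper does this with \cite[Theorem 2.11]{Bal07}. It gets $\K\simeq\I^*\oplus\I^{**}$, hence $\mathbbm{1}\simeq x\oplus y$ with $x\in\I^*$ and $y\in\I^{**}$. Then $x\otimes y\simeq 0$, so the split triangle $x\to\mathbbm{1}\to y\xrightarrow{0}\Sigma x$ is an idempotent triangle. Every $z\in\I^*$ satisfies $z\simeq(z\otimes x)\oplus(z\otimes y)\simeq z\otimes x$, so $x$ generates $\I^*$.

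You could also prove the splitting by hand. Rigidity gives $\mathrm{Hom}(a,b)\cong\mathrm{Hom}(\mathbbm{1},a^\vee\otimes b)=0$ for $a\in\I^*$, $b\in\I^{**}$, and symmetrically. So the objects $a\oplus b$ form a triangulated subcategory. In an idempotent-complete $\K$, the unit, being a summand of such an object, splits compatibly. Either way the compactness detour is no longer needed.

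This step is not a formality: it is exactly where idempotent completeness is used, which the paper leaves tacit. For example, let $\K\subseteq\mathsf{D}^{\mathrm{perf}}(k\times k)$ be the full subcategory of pairs $(x_1,x_2)$ with $\chi(x_1)\equiv\chi(x_2)\pmod 2$. It is an essentially small rigid tt-category whose spectrum is two discrete points, so (2) and (3) hold. Take $\I=\{x\in\K : x_1\simeq 0\}$. Then $\I^*=\{x\in\K : x_2\simeq 0\}$ is complemented and principal, generated by $(k\oplus\Sigma k,0)$. But any idempotent triangle $e\to\mathbbm{1}\to f$ with $0\neq e\in\I^*$ forces $e\simeq(k,0)\notin\K$. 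So ``complemented and principal'' is strictly weaker than ``idempotent generated''.

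The rest of your plan is sound. Step 1 correctly identifies $\I^*$ as the frame pseudocomplement, and Step 2 is Johnstone's theorem unpacked. Your (1)$\Rightarrow$(2) also works with the actual definition: from the triangle, $f\in(e)^*=\I^{**}$ and $\mathbbm{1}\in\Thick(e,f)$, so the Stone identity holds and your frame argument gives the second law. That route is a cleaner alternative to the paper's computation via Lemmas \ref{LemProductsEqual} and \ref{LemttIdempotents}.
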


An important consequence of this theorem is shown in Example \ref{ExDZ} where we consider the tt-category $\DZ$. $\SpcDZinv$ is not extremally disconnected and hence, $\DZ$ is not a Baer tt-category. This demonstrates a sharp contrast to commutative algebra, where $\mathbb{Z}$ is easily shown to be a Baer ring in Example \ref{ExampleZ}. This example thus serves as a warning when working with the derived category of a commutative ring. Although many properties of a given ring will pass to its derived category, caution is needed as this does not always occur.

In Section \ref{SectionReflection}, we give a construction for producing these Baer tt-categories. This construction is functorial, and moreover produces a left adjoint to the forgetful functor from the category of Baer tt-categories to the category of essentially small rigid tt-categories.

\begin{ThmAlpha}
    (Theorem \ref{ThmttUniversal}) Let $\mathcal{K}$ be an essentially small rigid tt-category. Then there is a Baer tt-category $B(\mathcal{K})$ and an annihilator compatible functor $\beta\colon\mathcal{K}\rightarrow B(\mathcal{K})$ such that for all annihilator compatible functors $F\colon\mathcal{K}\rightarrow\mathcal{B}$ from $\mathcal{K}$ to a Baer tt-category $\mathcal{B}$, there is a unique functor $\overline{F}\colon B(\mathcal{K})\rightarrow\mathcal{B}$ such that $\overline{F}\circ\beta=F$.
\end{ThmAlpha}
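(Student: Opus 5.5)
The plan is to mirror the classical construction of the Baer hull of a reduced ring via Stone duality for the frame $\ThickK$. By Corollary \ref{CorollaryBaerttED}, a rigid $\K$ is Baer exactly when $\SpcKinv$ is extremally disconnected. Equivalently, the frame $\ThickK\cong\Omega(\SpcKinv)$ satisfies the second De Morgan law, i.e.\ $(\I\cap\J)^*=\Thick(\I^*,\J^*)$.

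\textbf{Construction.} I would first build $B(\K)$ by formally adjoining idempotent-type objects that split each annihilator ideal $\I^{**}$ from its pseudocomplement $\I^*$. Concretely, consider the Boolean algebra $\mathrm{Ann}(\K)=\{\I^{**}\}$ of regular (annihilator) ideals of $\K$. For each pair of complementary annihilators $\I^{**},\I^*$ there is a "localisation" $\K\to\K/\I^*$, obtained as the Verdier quotient by the ideal $\I^*$.

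I would then define $B(\K)$ as the filtered 2-colimit, over finite partitions of unity in $\mathrm{Ann}(\K)$ (finite sets of pairwise disjoint annihilators whose join is $\K$), of the products
\[
\prod_i \K/\mathcal{A}_i^{*},
\]
followed by idempotent completion. The map $\beta$ is the diagonal. Each such product carries its tt-structure componentwise, and rigidity and essential smallness pass to quotients, finite products and filtered colimits.

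\textbf{Verification.} Three things must be checked.
\begin{enumerate}
\item $\beta$ is annihilator compatible. This follows because an annihilator $\I^*$ in $\K$ becomes exactly the kernel of projection onto the complementary factors.
\item $B(\K)$ is Baer. Every annihilator ideal of $B(\K)$ is already realised at some finite stage, by a compactness argument using that objects live at a finite stage. At the next stage it is split off as a direct factor, i.e.\ it is generated by an idempotent-like decomposition $\mathbbm{1}=e\oplus e'$. This gives the defining property of Definition \ref{DefBaertt}.
\item The universal property. Given an annihilator compatible $F\colon\K\to\mathcal B$ with $\mathcal B$ Baer, each annihilator $F(\I)^{*}$ in $\mathcal B$ is a direct factor, so $\mathcal B\simeq\mathcal B_1\times\mathcal B_2$. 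Annihilator compatibility ensures $F$ kills $\I^*$ in the right factor. Hence $F$ factors uniquely through each $\prod_i\K/\mathcal A_i^*$, compatibly with refinement, and therefore through the colimit. Uniqueness comes from $B(\K)$ being generated as a thick tensor ideal by the images of $\beta$ tensored with the splitting idempotents, all of whose images are forced.
\end{enumerate}

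\textbf{Main obstacle.} I expect the hardest step to be showing that $B(\K)$ is Baer, i.e.\ that adjoining splittings does not create new non-split annihilators in the colimit. Topologically this amounts to identifying $\mathrm{Spc}(B(\K))^\vee$ with the Gleason-cover-like space obtained from the Stone space of $\mathrm{Ann}(\K)$ fibred over $\SpcKinv$, and checking that this space is extremally disconnected. I would first try to prove this frame-theoretically: compute $\Omega(\mathrm{Spc}(B(\K))^\vee)$ as the frame generated by $\ThickK$ together with complemented elements for each annihilator. Then I would verify the second De Morgan law there directly and invoke Corollary \ref{CorollaryBaerttED}.
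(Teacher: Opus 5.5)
Up to the idempotent completion (see below), your construction is the paper's. $\K/\mathcal{A}^*$ is what the paper writes $L_{\supp(\mathcal{A}^*)}\K$, and $B(\K)$ is the filtered colimit over finite partitions of $A(\K)$. Your steps 1 and 3 are the paper's arguments for annihilator compatibility of $\beta$ and for $\overline{F}$, which is defined on the $\beta(k)$ and on the splitting idempotents.

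\textbf{The gap is in step 2}, the step you flagged. Its conclusion is true, but it does not follow by compactness. Compactness controls annihilators of finitely many objects. For a tt-ideal $\I\subseteq B(\K)$, however, $\I^*=\bigcap_{x\in\I}(x)^*$, and the objects of $\I$ may live at infinitely many stages.

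Here is a concrete case. For $\D\in A(\K)$ let $e_{\D}$ be the idempotent of the partition $\{\D,\D^*\}$ that is $\mathbbm{1}$ on the $\D$-block, so that $\beta(a)^*=(e_{(a)^*})$. In $B(\DZ)$, let $\I$ be generated by those $\beta(\mathbb{F}_p)$ for which $\P_p$ has odd index in the enumeration of Example \ref{ExDZnotDML}.
\begin{itemize}
\item Each finite subfamily has annihilator $(e_{\P_{p_1}\cap\cdots\cap\P_{p_k}})$, and these form a strictly decreasing chain.
\item But $\I^*=(e_{\oo})$ comes from the partition $\{\oo,\E\}$, which no finite subfamily sees.
\end{itemize}

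The missing idea is that $A(\K)$ is a \emph{complete} Boolean algebra (Proposition \ref{PropBoolean}). An arbitrary family of blocks therefore still determines a single two-block partition. The paper's route is:
\begin{itemize}
\item every object is $\bigoplus_i\beta(k_i)\otimes e_i$ (Lemma \ref{LemStandard});
\item $\beta(a)^*=(e_{(a)^*})$, whose heart is $k\in((a)^{**}\cap\D)^*\iff a\otimes k\in\D^*$, proved using the absence of nilpotents (Lemma \ref{LemIdempotentGenerated});
\item Lemma \ref{LemProductsEqual} passes from $\beta(a)$ to $\beta(k_i)\otimes e_i$;
\item finally $\bigcap_{x\in\I}(e_{\D_x})=(e_{\bigcap_x\D_x})$ (Proposition \ref{PropBaer}).
\end{itemize}
That last identity is what replaces compactness. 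Checking it componentwise uses the infinite distributive law in $A(\K)$.

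\textbf{Your fallback} is a genuinely different route, and it works for your completed category. Identifying $\mathrm{Spc}(B(\K))^\vee$ with the absolute of $\SpcKinv$ is exactly Proposition \ref{PropositionBaerRump}. That proof uses continuity of $\mathrm{Spc}$ along filtered colimits and disjoint unions for finite products. It also uses $\mathrm{Spc}(L_{\supp(\D^*)}\K)^\vee\cong U(\D^*)=\overline{\supp(\D)}$ (Lemma \ref{LemmattClosure}) and Proposition \ref{PropositionBooleanIso} to match partitions. Rump's theorem and Corollary \ref{CorollaryBaerttED} then give Baer. This route is more conceptual, but it rests on external spectral results. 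It also does not exhibit the generating idempotents, which steps 1 and 3 need anyway.

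\textbf{The idempotent completion} is not in the paper, and it creates a tension.
\begin{itemize}
\item With it, the universal property fails for Baer targets that are not idempotent complete. Take $\K$ itself to be such a category and $F=\mathrm{id}_\K$: no additive $\overline{F}\colon\K^\natural\to\K$ restricts to the identity.
\item Without it, the last step of your fallback is unsafe. Passing from extremally disconnected to Baer ((3)$\Rightarrow$(1) in Corollary \ref{CorollaryBaerttED}) splits $\mathbbm{1}$ via Balmer's theorem, which needs idempotents to split. For a field $k$, the tt-subcategory of $\mathsf{D}^{\mathrm{perf}}(k\times k)$ of objects whose two Euler characteristics have equal parity has a discrete two-point spectrum. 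Yet its only idempotents are $0$ and $\mathbbm{1}$, so it is not Baer.
\end{itemize}
For the uncompleted $B(\K)$ that the statement requires, use the direct argument, with the idempotents already present in the finite products.
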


We have produced a tt-geometric version of T.P. Speed's idea of Baerification from \cite{Spe73}. This is significant as it demonstrates a construction from commutative algebra which does indeed pass through to tt-geometry. This is sometimes not the case, an example being the construction of the von Neumann-regular hull of a commutative ring $R$ from \cite{AM16}. An analogous construction for tt-categories is very desirable, however the construction of \cite{AM16} requires the use of the residue fields of $R$, a concept which has thus far proved elusive in tt-geometry (see for example \cite{BKS19}).

\subsection*{Acknowledgements}

The author would like to thank their PhD advisor Scott Balchin for many helpful conversations and comments on earlier versions of this work. We also thank David Barnes and Greg Stevenson for their careful reading of earlier drafts of this paper.

\section{Background}

\subsection{Topological De Morgan laws}\label{SectionTopologicalDeMorganlaws}

De Morgan's laws in set theory are incredibly well known. Given subsets $A$ and $B$ of some set $\mathcal{U}$, the following hold:

\begin{enumerate}
    \item $(A\cup B)^c = A^c \cap B^c$.
    \item $(A\cap B)^c = A^c \cup B^c$.
\end{enumerate}

We can however state these laws more generally in the setting of frames, where the power set of $\mathcal{U}$ is an example of a frame (in fact a complete Boolean algebra, see Section \ref{SectionBaerRings}).

\begin{definition}\label{DefinitionFrame}
    Let $L$ be a complete lattice, that is, a partially ordered set such that every subset has a meet ($\wedge$) and a join ($\vee$). $L$ is a \textbf{frame} if it satisfies the following distributivity law:
    \begin{equation*}
        \Big(\bigvee A\Big)\wedge b = \bigvee\{a\wedge b~|~a\in A\}
    \end{equation*}
    for all subsets $A\subseteq L$ and for all $b\in L$. Every frame comes equipped with a \textbf{Heyting operation}, which is defined as follows:
\begin{equation*}
    (a\Rightarrow b) := \bigvee\{c\in L~|~ a\wedge c\leq b\}
\end{equation*}

for elements $a,b\in L$. From this, we can define the \textbf{pseudocomplement} of an element $a\in L$, which is given by

\begin{equation*}
    \neg a := (a\Rightarrow 0) = \bigvee\{c\in L~|~ a\wedge c= 0\}
\end{equation*}

where $0$ denotes the bottom element of the frame.
\end{definition}

For the power set of $\mathcal{U}$, $\mathbb{P}(\mathcal{U})$, the meet of a collection of subsets is given by their intersection, while the join is given by their union. The top and bottom elements of this frame are $\mathcal{U}$ and $\varnothing$ respectively, and the pseudocomplement of a subset $A$ is given by its complement $A^c=\mathcal{U}\setminus A$. One can easily prove that the above distributivity law holds for $\mathbb{P}(\mathcal{U})$. 

We now define De Morgan's laws in this frame-theoretic setting:

\begin{definition}\label{DefinitionFrameDeMorganlaws}
    Let $L$ be a frame and let $A,B\subseteq L$. Then
    \begin{enumerate}
    \item $L$ satisfies the \textbf{first De Morgan law} if $\neg(A\vee B) = \neg A \wedge \neg B$.
    \item $L$ satisfies the \textbf{second De Morgan law} if $\neg(A\wedge B) = \neg A \vee \neg B$.
\end{enumerate}
\end{definition}

If we again consider the example of $\mathbb{P}(\mathcal{U})$, the above definition recovers the usual De Morgan laws from set theory, which are known to always hold. In the more general setting of an arbitrary frame, \cite{Fre72} shows that the first De Morgan law is still always true. However, this does not apply to the second De Morgan law, and Theorem 1 of \cite{Joh79} gives a necessary and sufficient condition for a frame to satisfy it (Theorem \ref{TheoremJohnstone}).

A frame $L$ is \textbf{spatial} if for all $a,b\in L$ such that $a\nleq b$, we have a map $p\colon L\rightarrow\mathbf{2}:=\{0\leq 1\}$ with $p(a)=1$ and $p(b)=0$, while a topological space $X$ is \textbf{sober} if every non-empty irreducible closed subset of $X$ contains a unique generic point. There is an equivalence of categories between the category of spatial frames and sober spaces
\begin{equation*}
    S\colon\mathsf{Spat}\rightleftarrows\mathsf{Sob}\colon\Omega
\end{equation*}
known as Stone duality, so given a spatial frame $L$ we have a sober space $S(L)$ such that there is an isomorphism of frames
\begin{equation*}
    L\cong\Omega(S(L))
\end{equation*}
between $L$ and the frame of open sets of $S(L)$. This isomorphism will allow us to compare topological properties of the space $S(L)$ with properties of the original spatial frame in question. With this in hand, we can give Johnstone's condition for a frame to satisfy the second De Morgan law.

\begin{definition}
    Let $X$ be a topological space. Then $X$ is said to be \textbf{extremally disconnected} if the closure of every open set of $X$ is open.
\end{definition}

\begin{theorem}\label{TheoremJohnstone}
    (\cite[Theorem 1]{Joh79}) Let $L$ be a spatial frame and let $S(L)$ be the sober space obtained by Stone duality. Then $L$ satisfies the second De Morgan law if and only if $S(L)$ is extremally disconnected.
\end{theorem}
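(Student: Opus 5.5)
Since $L$ is spatial, I will work directly in $\Omega(X)$ with $X=S(L)$, using the frame isomorphism $L\cong\Omega(X)$ from Stone duality. Both the second De Morgan law and pseudocomplements are frame-theoretic, so they transfer along this isomorphism. In $\Omega(X)$ the meet of two opens $U,V$ is $U\cap V$, since a finite intersection of opens is open. The join is $U\cup V$. The pseudocomplement is $\neg U=\bigcup\{W \text{ open}~|~W\cap U=\varnothing\}=\operatorname{int}(X\setminus U)=X\setminus\overline{U}$. So the statement reduces to a purely topological one: $X\setminus\overline{U\cap V}=(X\setminus\overline U)\cup(X\setminus\overline V)$ for all opens $U,V$ if and only if $X$ is extremally disconnected. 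Taking complements, the law says $\overline{U\cap V}=\overline U\cap\overline V$ for all open $U,V$.

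For the direction ($\Leftarrow$), assume $X$ is extremally disconnected. The inclusion $\overline{U\cap V}\subseteq\overline U\cap\overline V$ always holds. For the reverse, let $x\in\overline U\cap\overline V$ and let $N$ be an open neighbourhood of $x$. Since $\overline U$ is open, $N\cap\overline U$ is an open neighbourhood of $x$ meeting $V$ (as $x\in\overline V$). Hence $N\cap\overline U\cap V$ is a nonempty open set contained in $\overline U$. Any nonempty open subset of $\overline U$ meets $U$, so $N\cap U\cap V\neq\varnothing$. Therefore $x\in\overline{U\cap V}$.

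For the direction ($\Rightarrow$), assume the law holds and let $U$ be open. Put $V=X\setminus\overline U$. Then $U\cap V=\varnothing$, so the law gives $\overline U\cap\overline V=\varnothing$. Now $\overline U\cup V=X$, so $\overline U\cup\overline V=X$. Together these say $\overline U=X\setminus\overline V$, which is open, so $X$ is extremally disconnected.

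The only point needing care is the identification of the frame operations in $\Omega(X)$, in particular that $\neg U=X\setminus\overline U$. The rest is elementary point-set topology.
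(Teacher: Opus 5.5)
Your proof is correct and complete. The paper gives no proof of this statement; it only cites \cite{Joh79}. Your argument is therefore a self-contained substitute, and the useful comparison is with the lattice-theoretic argument behind Johnstone's result.

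You compute $\neg U = X\setminus\overline{U}$ in $\Omega(X)$. This turns the law into the point-set identity $\overline{U\cap V}=\overline{U}\cap\overline{V}$ for open $U,V$, which you check directly in both directions. Sobriety is never used: you have in fact shown that, for an arbitrary space $X$, $\Omega(X)$ satisfies the second De Morgan law if and only if $X$ is extremally disconnected. Spatiality of $L$ enters only to identify $L\cong\Omega(S(L))$.

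The frame-theoretic route first shows that in any frame (spatial or not) the second De Morgan law is equivalent to $\neg a\vee\neg\neg a=1$ for all $a$, i.e.\ every regular element is complemented. Only then does it specialise to $\Omega(X)$, where $\neg U\cup\neg\neg U=X$ says exactly that $\overline{U}$ is open. Your ($\Rightarrow$) step with $V=X\setminus\overline{U}=\neg U$ is precisely this condition in topological dress.

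It is also the form the paper itself exploits later: the step $\mathrm{thick}^{\otimes}(\mathcal{I}^*,\mathcal{I}^{**})=(\mathcal{I}\cap\mathcal{I}^*)^*=\mathcal{K}$ in the proof of Theorem~\ref{ThmBaer2ttDML}, and the regular-open discussion around Theorem~\ref{TheoremBaeriffED}. The algebraic route works point-free, so it applies directly in frames such as $\mathrm{Th}^{\sqrt{\otimes}}(\mathcal{K})$. Yours is more elementary and makes the geometric content transparent: closure commutes with finite intersections of open sets.
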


\begin{example}\label{Example3set}
    Consider the three point set $X=\{0,1,2\}$ with the excluded point topology. That is, a subset $U$ of $X$ is open if it does not contain the point $0$, or is equal to the whole space. This space can be represented in the following diagram where closure goes downwards.
    
    \begin{figure}[H]
    \centering
\begin{tikzpicture}[yscale=-1]
\draw[white, fill=white] (0,0) circle (.3cm);
\node[circle] at (0,0) (e) {};
\draw[fill = black] (0,0) circle (0.1cm);
\draw[draw = black] (-2,-1.5) circle (0.1cm);
\draw[draw = black] (2,-1.5) circle (0.1cm);
\node at (0,.5) {$0$};
\node at (-2,-2.0) {$1$};
\node at (2,-2.0) {$2$};
\draw [->,line join=round,decorate, decoration={zigzag, segment length=6, amplitude=.9,post=lineto, post length=2pt}]  (-1.8,-1.25) -- (e);
\draw [->,line join=round,decorate, decoration={zigzag, segment length=6, amplitude=.9,post=lineto, post length=2pt}]  (1.8,-1.25)  -- (e);
\end{tikzpicture}\caption{Three point set with the excluded point topology. The filled circle represents a closed point and the arrows represent specialization.}\label{Figure3set}
\end{figure}

Recall that in the frame $\Omega(X)$, the meet of open sets is given by the interior of their intersection, the join by their union, and the pseudocomplement of an open set by the interior of its complement. Now consider the open sets $A=\{1\}$ and $B=\{2\}$. Then
\begin{align*}
    \neg(A\wedge B) &= \neg((\{1\}\cap\{2\})^\circ) = \neg(\varnothing) = X,\\
    \text{while } ~~~ \neg A \vee \neg B &= \{0,2\}^\circ\cup\{0,1\}^\circ = \{2\}\cup\{1\} = \{1,2\} \neq X,
\end{align*}
which shows that the second De Morgan law does not hold for the frame $\Omega(X)$. By Theorem \ref{TheoremJohnstone}, $X$ cannot be extremally disconnected, which can be verified by observing that $\overline{\{1\}}=\{0,1\}$ which is not an open set. Note that this space arises as 
\begin{equation*}
    X \cong \mathrm{Spec}\big(\sfrac{k[x,y]_{(x,y)}}{(xy)}\big).
\end{equation*}
\end{example}

\subsection{Commutative Baer Rings}\label{SectionBaerRings}

The frame of radical ideals of a commutative ring $R$, $\Ridl$, forms a spatial frame, with the meet of two radical ideals given by their intersection and their join given by the radical of their sum (the smallest radical ideal containing their union). The space obtained via Stone duality is the usual Zariski spectrum $\SpecR$. One can therefore conclude that the frame $\Ridl$ satisfies the second De Morgan law if and only if $\SpecR$ is extremally disconnected. In \cite{NR85}, the authors derive algebraic De Morgan laws, and give an intrinsic definition of those commutative (reduced) rings which satisfy the second algebraic De Morgan law. They are the commutative Baer rings, objects which have been studied as a potential algebraic analogue of von Neumann algebras.

The pseudocomplement of a radical ideal $I$ is given by the radical of its \textbf{annihilator ideal}, $I^*:=\{r\in R~|~rI=0\}$. We can now define the algebraic De Morgan laws, as derived by Niefield and Rosenthal.

\begin{definition}\label{DefinitionAlgeraicDeMorganlaws}
    (\cite[Definition 2.3.]{NR85}) Let $R$ be a commutative ring. Then $R$ satisfies:
    \begin{enumerate}
        \item The \textbf{first algebraic De Morgan law} if $(A+B)^*=A^*\cap B^*$ for all ideals $A,B\unlhd R$.
        \item The \textbf{second algebraic De Morgan law} if $(A\cap B)^*=A^*+B^*$ for all ideals $A,B\unlhd R$.
    \end{enumerate}
\end{definition}

In Section \ref{SectionTopologicalDeMorganlaws}, we commented on the fact that every frame satisfies the first De Morgan law by \cite{Fre72}. We now confirm that every commutative ring satisfies the first algebraic De Morgan law.

\begin{proposition}
    Let $R$ be a commutative ring. Then $R$ satisfies the first algebraic De Morgan law.
\end{proposition}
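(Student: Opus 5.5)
The plan is a direct double inclusion, using only the definition $I^*=\{r\in R~|~rI=0\}$ for an arbitrary ideal $I\unlhd R$ together with the fact that $A+B=\{a+b~|~a\in A,\ b\in B\}$. No frame theory is needed. The argument is purely elementwise, since annihilators are defined for all ideals, not just radical ones.

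First, I will record that taking annihilators is order-reversing: if $I\subseteq J$ then $J^*\subseteq I^*$. Indeed, if $rJ=0$ then in particular $rI=0$. Since $A\subseteq A+B$ and $B\subseteq A+B$, this immediately gives $(A+B)^*\subseteq A^*$ and $(A+B)^*\subseteq B^*$. Hence $(A+B)^*\subseteq A^*\cap B^*$.

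For the reverse inclusion, take $r\in A^*\cap B^*$, so that $rA=0$ and $rB=0$. An arbitrary element of $A+B$ has the form $a+b$ with $a\in A$ and $b\in B$. Distributivity gives $r(a+b)=ra+rb=0+0=0$. Thus $r(A+B)=0$, that is, $r\in(A+B)^*$.

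Combining the two inclusions gives $(A+B)^*=A^*\cap B^*$ for all ideals $A,B\unlhd R$. There is no real obstacle here. The only point requiring care is to use the elementwise description of $A+B$, which needs no radicals, so the identity holds for all ideals and not merely radical ones. This parallels the frame-theoretic fact from \cite{Fre72} that the first De Morgan law always holds.
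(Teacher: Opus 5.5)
Your proof is correct and matches the paper's argument. One inclusion comes from order-reversal of annihilators applied to $A, B \subseteq A+B$, and the other from the elementwise computation $r(a+b)=ra+rb=0$ for $r\in A^*\cap B^*$.
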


\begin{proof}
    $A\subseteq A+B$ and $B\subseteq A+B$, hence $(A+B)^*\subseteq A^*$ and $(A+B)^*\subseteq B^*$, and so $(A+B)^*\subseteq A^*\cap B^*$. 
    
    Conversely, let $x\in A^*\cap B^*$ and $y\in A+B$. Then $y=a+b$ for $a\in A$ and $b\in B$, so 
    \begin{equation*}
        xy=xa+xb=0
    \end{equation*} 
    since $x\in A^*\cap B^*$. Hence, $x\in(A+B)^*$ and so $A^*\cap B^*\subseteq(A+B)^*$.
\end{proof}

Every commutative ring in fact satisfies one inclusion of the second algebraic De Morgan law. To see this, let $r\in A^*+B^*$. Then there are $a\in A^*$ and $b\in B^*$ such that $r=a+b$. Now, let $c\in A\cap B$. Then 
\begin{equation*}
    rc=(a+b)c=ac+bc=0
\end{equation*}
so $r\in(A\cap B)^*$ as required. The following example will demonstrate that the reverse inclusion does not always hold.

\begin{example}\label{ExampleNot2DML}
    Consider the commutative ring $R=\mathbb{Z}[x]/(x^2-1)$ and the ideals $A=(x+1)$ and $B=(x-1)$. Then
    \begin{align*}
        A\cap B &= (x+1)\cap(x-1)=0 \implies (A\cap B)^* = R,\\
        \text{while } ~~~ A^*+B^* &= (x+1)^*+(x-1)^* = (x-1)+(x+1)\neq R
    \end{align*}
    since $1\notin(x-1)+(x+1)$. Hence, $(A\cap B)^*\nsubseteq A^*+B^*$.
\end{example}

 We now restrict our discussion to commutative \textbf{reduced} rings. In this case, the second algebraic De Morgan law holds precisely when $R$ is a commutative Baer ring.

 \begin{definition}
    Let $R$ be a commutative ring. Then $R$ is a \textbf{Baer ring} if the annihilator of every ideal of $R$ is generated by an idempotent, i.e., for every ideal $I\unlhd R$, we have $I^*=(e)$ for some idempotent element $e\in R$. 
\end{definition}

\begin{theorem}
    (\cite[Theorem 1]{NR85}) Let $R$ be a commutative reduced ring. Then $R$ satisfies the second algebraic De Morgan law if and only if $R$ is a Baer ring.
\end{theorem}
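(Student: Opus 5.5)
We prove both directions directly from the definitions, using reducedness at the crucial points. Throughout, we use that $I^*$ is always an ideal and that $I\subseteq J$ implies $J^*\subseteq I^*$. In a reduced ring, $I\cap I^*=0$, since any $x$ in the intersection satisfies $x^2=0$. We also use that $I\subseteq I^{**}$.

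\emph{Baer implies the second law.} One inclusion $A^*+B^*\subseteq(A\cap B)^*$ holds in every commutative ring, as shown above. For the reverse inclusion, write $A^*=(e)$ and $B^*=(f)$ with $e,f$ idempotent. First we identify the double annihilators. We have $(e)^*=(1-e)$: if $re=0$ then $r=r(1-e)$, and conversely $(1-e)e=0$. Hence $A^{**}=(1-e)$ and $B^{**}=(1-f)$.

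Next, $(A\cap B)^*=(A^{**}\cap B^{**})^*$. The inclusion $\supseteq$ follows from $A\cap B\subseteq A^{**}\cap B^{**}$. For $\subseteq$, take $r\in(A\cap B)^*$. Then $rA^{**}\cap B^{**}$ is contained in $A^{**}\cap B^{**}$, and we claim it is annihilated by $A\cap B$ after two steps. Concretely, $rA\cap B=0$ in the reduced ring: if $x\in rA\cap B$ then $x\in A\cap B$, so $rx=0$, and $x=ra$ gives $x^2=r a x=0$ up to the usual care. From this one gets $rB\subseteq A^*$, hence $r B A^{**}=0$, and repeating the argument with the roles swapped gives $rA^{**}B^{**}=0$. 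This shows that annihilators are unchanged on passing to double annihilators.

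Now $(1-e)\cap(1-f)=((1-e)(1-f))$, the ideal generated by a single idempotent. Its annihilator is $(1-(1-e)(1-f))=(e+f-ef)$, which is contained in $(e)+(f)=A^*+B^*$. This proves $(A\cap B)^*\subseteq A^*+B^*$.

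\emph{Second law implies Baer.} Let $I$ be an ideal. The key step is to apply the second law to $A=I^*$ and $B=I^{**}$. Since $A\cap B=I^*\cap I^{**}=0$ by reducedness, the law gives $R=(A\cap B)^*=I^{**}+I^{***}=I^{**}+I^*$, using $I^{***}=I^*$. So we may write $1=a+b$ with $a\in I^*$ and $b\in I^{**}$. Then $ab\in I^*\cap I^{**}=0$, so $a=a(a+b)=a^2$, and $a$ is idempotent.

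It remains to show $I^*=(a)$. The inclusion $(a)\subseteq I^*$ is clear since $a\in I^*$. Conversely, for $x\in I^*$ we have $xb\in I^*\cap I^{**}=0$, so $x=xa+xb=xa\in(a)$.

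The main obstacle is the reduction in the first direction, namely that $(A\cap B)^*=(A^{**}\cap B^{**})^*$ in a reduced ring. I would isolate this as a lemma. Its key content is that in a reduced ring $xy=0$ if and only if $x(y)=0$, and that $A\cap B$ and $AB$ have the same annihilator, because $(A\cap B)^2\subseteq AB\subseteq A\cap B$ and reducedness lets us pass between an ideal and its square. With this, $(AB)^*$ equals $\{r : rA\subseteq B^*\}$, which is $(A^{**}B^{**})^*$ by iterating $(J)^*=(J^{**})^*$.
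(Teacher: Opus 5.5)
Your proof is correct and is essentially the argument the paper gives for the tensor-triangular analogue, Theorem~\ref{ThmBaer2ttDML}; the ring statement itself is only cited from \cite{NR85}, not proved. The converse direction (writing $1\in I^*+I^{**}$ and checking that the $I^*$-component is an idempotent generator) is identical, while for the forward direction the paper uses only $(A\cap B)^*\subseteq(AB)^*$ plus the ring version of Lemma~\ref{LemProductsEqual}, giving $(AB)^*=((1-e)(1-f))^*=(e+f-ef)$ directly. Your detour back to $(A^{**}\cap B^{**})^*$, which is where you spend reducedness and where your middle paragraph is muddled, is therefore unnecessary, though harmless.
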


Again consider the commutative reduced ring $R=\mathbb{Z}[x]/(x^2-1)$ of Example \ref{ExampleNot2DML}. By the above theorem, since $R$ does not satisfy the second algebraic De Morgan law, $R$ cannot be a Baer ring. To see this directly, observe that the annihilator ideals $(x+1)=(x-1)^*$ and $(x-1)=(x+1)^*$ are not idempotent generated since the only idempotent elements of $R$ are $0$ and $1$.

Thus far we have established that $\Ridl$ satisfies the second (frame-theoretic) De Morgan law if and only if $\SpecR$ is extremally disconnected, and in the case of $R$ being reduced, $\Ridl$ satisfies the second algebraic De Morgan law if and only if $R$ is a Baer ring. It is no surprise that when $R$ is a commutative reduced ring, these conditions are all equivalent:

\begin{theorem}\label{TheoremBaeriffED}
    (\cite[Theorem 2]{NR85}) Let $R$ be a commutative reduced ring. Then $R$ is a Baer ring if and only if $\SpecR$ is extremally disconnected.
\end{theorem}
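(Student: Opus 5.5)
The strategy is to work directly with the basic open sets $D(f)=\{\mathfrak p\in\SpecR~|~f\notin\mathfrak p\}$ and their closures, using that $R$ is reduced to translate closures into annihilators. The first step, and the key one, is the formula
\begin{equation*}
    \SpecR\setminus\overline{U_I}=D(I^*)^\circ\quad\text{more precisely}\quad \overline{U_I}=V(I^*),
\end{equation*}
where $U_I=\bigcup_{f\in I}D(f)$ is the open set corresponding to an ideal $I$. The inclusion $\overline{U_I}\subseteq V(I^*)$ is easy: if $x\in I^*$ and $f\in I$, then $xf=0$, so every prime missing $f$ contains $x$.

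For the reverse inclusion, write $\overline{U_I}=V(J)$ with $J=\bigcap_{\mathfrak p\in U_I}\mathfrak p$. If $x\in J$ and $f\in I$, then $xf$ lies in every prime: primes not containing $f$ contain $x$, and primes containing $f$ contain $xf$. So $xf$ is nilpotent, hence $xf=0$ because $R$ is reduced. Thus $J\subseteq I^*$, giving $V(I^*)\subseteq V(J)$. Reducedness is used exactly here.

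With this in hand, the two directions go as follows.

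($\Rightarrow$) Suppose $R$ is Baer. Given any open $U=U_I$, we have $I^*=(e)$ with $e$ idempotent, so $\overline{U}=V(e)=D(1-e)$, which is open. Hence $\SpecR$ is extremally disconnected.

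($\Leftarrow$) Suppose $\SpecR$ is extremally disconnected, and take an ideal $I$. Then $V(I^*)=\overline{U_I}$ is clopen. In any ring, clopen subsets of the spectrum correspond to idempotents, so $V(I^*)=V(e)$ for some idempotent $e$; the standard argument lifts an idempotent of $R/\mathrm{nil}(R)$, and here $R$ is already reduced. It remains to show $I^*=(e)$, and this is the main obstacle, since equal vanishing loci only give equal radicals, $\sqrt{I^*}=\sqrt{(e)}=(e)$.

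To close that gap I would use two facts about reduced rings. First, $I^*$ is itself radical: if $x^n\in I^*$, then for $f\in I$ we get $(xf)^n=x^nf^n=0$, so $xf=0$. Second, $(e)$ is radical since it equals $V$-closed $eR\cong R/(1-e)$, which is reduced. Therefore $I^*=\sqrt{I^*}=\sqrt{(e)}=(e)$, and $R$ is Baer.

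As an alternative route, the result also follows from Theorem \ref{TheoremJohnstone} combined with \cite[Theorem 1]{NR85}. That approach would require checking that the frame-theoretic second De Morgan law on $\Ridl$ agrees with the algebraic one for reduced $R$, which again comes down to annihilators of radical ideals being radical. The direct argument above avoids this translation.
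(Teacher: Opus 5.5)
Your proof is correct, but it does not follow the paper's route. The paper gives no proof of this theorem. It only sketches the intuition: by \cite{DT21} there is an isomorphism of complete Boolean algebras $\AnnR\cong\RoSpecR$, so being Baer becomes ``every regular open subset of $\SpecR$ is clopen'', and that condition is equivalent to extremal disconnectedness.

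You avoid the Boolean-algebra machinery and prove one identity directly, $\overline{D(I)}=V(I^*)$ for reduced $R$. This is precisely the ring-theoretic analogue of Lemma \ref{LemmattClosure}, which the paper only establishes later for tt-categories. You then pass from ``same vanishing locus'' to ``same ideal'' using radicality of $I^*$ and of $(e)$, the analogue of Lemma \ref{LemttAnnRadical}.

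What your route buys is a self-contained argument. It also makes explicit a step the paper's sketch glosses over: an annihilator ideal whose associated regular open set is clopen is actually generated by an idempotent, and does not merely have the same radical as an idempotent ideal. The paper's viewpoint, in turn, is more structural. It is the one reused for Baerification (Proposition \ref{PropositionBooleanIso}). Your ``alternative route'' via Theorem \ref{TheoremJohnstone} is the one the paper itself takes in the tt-setting (Corollary \ref{CorollaryBaerttED}).

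Two cosmetic points:
\begin{itemize}
\item The interior in your first display is redundant; it should simply read $\SpecR\setminus\overline{U_I}=D(I^*)$.
\item Your justification that $(e)$ is radical is muddled. The isomorphism $eR\cong R/(1-e)$ shows that $(1-e)$ is radical, not $(e)$. What you want is that $R/(e)\cong(1-e)R$ is reduced. Alternatively, argue directly: $x^n\in(e)$ gives $(x(1-e))^n=0$, hence $x=xe$.
\end{itemize}
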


We do not include the proof here but rather explain the intuition behind this equivalence. Recall that a \textbf{complete Boolean algebra} $B$ is a frame in which every element has a complement, i.e., for every $b\in B$ there is an element $b'$ such that $b\vee b'=1$. When $R$ is reduced, the collection of annihilator ideals of $R$ is a complete Boolean algebra by \cite[Theorem 1.1.]{DT21}, where the complement of an annihilator ideal $I^*$ is given by its annihilator $I^{**}$. Note that for a non-reduced ring this may only produce a pseudocomplement. We denote the complete Boolean algebra of annihilator ideals of $R$ by $\AnnR$.

An open set $U$ of a topological space $X$ is said to be \textbf{regular open} if $U=\overline{U}^\circ$. The collection of regular open subsets of a space is also an example of a complete Boolean algebra, and by \cite[Proposition 1.8.]{DT21}, there is an isomorphism
\begin{equation*}
    \AnnR\cong\RoSpecR,
\end{equation*}
where $\RoSpecR$ is the complete Boolean algebra of regular open subsets of $\SpecR$. As a consequence, a commutative reduced ring $R$ being a Baer ring, i.e., every annihilator ideal being idempotent generated, is equivalent to every regular open subset of $\SpecR$ being clopen. This is in turn equivalent to $\SpecR$ being extremally disconnected, since an extremally disconnected space can only have a regular open subset if it is clopen, as in this case we have $U=\Big(\overline{U}\Big)^\circ=\overline{U}$, due to the closure of every open set being open.

The following example is included to demonstrate the equivalence of Theorem \ref{TheoremBaeriffED}.

\begin{example}\label{ExampleZ}
    Consider the ring of integers $\mathbb{Z}$ and let $I$ be a non-zero ideal of $\mathbb{Z}$. Then $I^*=0$ since $\mathbb{Z}$ is a domain. Since 0 is idempotent, $\mathbb{Z}$ is a Baer ring (for this reason, every domain is a Baer ring).
    
    Now consider the space $\SpecZ$:

    \begin{figure}[H]
    \centering
\begin{tikzpicture}[yscale=1]
\draw[white, fill=white] (0,0) circle (.3cm);
\node[circle] at (0,0) (e) {};
\draw[draw = black] (0,0) circle (0.1cm);
\draw[fill = black] (-4,-1.25) circle (0.1cm);
\draw[fill = black] (-2,-1.5) circle (0.1cm);
\draw[fill = black] (0,-1.75) circle (0.1cm);
\draw[fill = black] (2,-1.5) circle (0.1cm);
\node at (0,.5) {$(0)$};
\node at (-4,-1.75) {$(2)$};
\node at (-2,-2.0) {$(3)$};
\node at (0,-2.25) {$(5)$};
\node at (2,-2.0) {$(7)$};
\node at (4,-1.75) {$\cdots$};
\draw [->,line join=round,decorate, decoration={zigzag, segment length=6, amplitude=.9,post=lineto, post length=2pt}]  (e) -- (-3.75,-1.2);
\draw [->,line join=round,decorate, decoration={zigzag, segment length=6, amplitude=.9,post=lineto, post length=2pt}]  (e) -- (3.75,-1.2);
\draw [->,line join=round,decorate, decoration={zigzag, segment length=6, amplitude=.9,post=lineto, post length=2pt}]  (e) -- (-1.8,-1.25);
\draw [->,line join=round,decorate, decoration={zigzag, segment length=6, amplitude=.9,post=lineto, post length=2pt}]  (e)  -- (1.8,-1.25);
\draw [->,line join=round,decorate, decoration={zigzag, segment length=6, amplitude=.9,post=lineto, post length=2pt}]  (e)  -- (0.0,-1.4);
\end{tikzpicture}\caption{The Zariski spectrum of $\mathbb{Z}$.}\label{fig:SpecZ}
\end{figure}

where each of the non-zero ideals represent a closed point, $(0)$ is a generic point, and the direction of closure is downwards. The closure of $(0)$ is therefore equal to $\SpecZ$, and since every open set must contain (0), the closure of any open set must be equal to $\SpecZ$ itself, which is of course open. Hence, $\SpecZ$ is extremally disconnected.
\end{example}

\begin{example}
    Returning to the example of $R=\mathbb{Z}[x]/(x^2-1)$, by Theorem \ref{TheoremBaeriffED} $\SpecR$ is not extremally disconnected. To see this, consider $\SpecR$ which is as follows:
    
    \begin{figure}[H]
    \centering
\begin{tikzpicture}[yscale=1]
\draw[fill = black] (-4,-1.5) circle (0.1cm);
\draw[draw = black] (-3,0) circle (0.1cm);
\node[circle] at (-3,0) (e) {};
\draw[fill = black] (-2,-1.5) circle (0.1cm);
\draw[fill = black] (0,-1.25) circle (0.1cm);
\draw[fill = black] (2,-1.5) circle (0.1cm);
\draw[draw = black] (3,0) circle (0.1cm);
\node[circle] at (3,0) (f) {};
\draw[fill = black] (4,-1.5) circle (0.1cm);
\node at (-6,-1.5) {$\cdots$};
\node at (-4,-2.125) {$(5,x-1)$};
\node at (-2,-2.125) {$(3,x-1)$};
\node at (-3,.5) {$(x-1)$};
\node at (0,-1.625) {$(2,x-1)=(2,x+1)$};
\node at (3,.5) {$(x+1)$};
\node at (2,-2.125) {$(3,x+1)$};
\node at (4,-2.125) {$(5,x+1)$};
\node at (6,-1.5) {$\cdots$};
\draw [->,line join=round,decorate, decoration={zigzag, segment length=6, amplitude=.9,post=lineto, post length=2pt}]  (e) -- (-5.5,-1.1);
\draw [->,line join=round,decorate, decoration={zigzag, segment length=6, amplitude=.9,post=lineto, post length=2pt}]  (e) -- (-3.9,-1.25);
\draw [->,line join=round,decorate, decoration={zigzag, segment length=6, amplitude=.9,post=lineto, post length=2pt}]  (e) -- (-2.1,-1.25);
\draw [->,line join=round,decorate, decoration={zigzag, segment length=6, amplitude=.9,post=lineto, post length=2pt}]  (e)  -- (-0.2,-1.2);
\draw [->,line join=round,decorate, decoration={zigzag, segment length=6, amplitude=.9,post=lineto, post length=2pt}]  (f) -- (5.5,-1.1);
\draw [->,line join=round,decorate, decoration={zigzag, segment length=6, amplitude=.9,post=lineto, post length=2pt}]  (f) -- (3.9,-1.25);
\draw [->,line join=round,decorate, decoration={zigzag, segment length=6, amplitude=.9,post=lineto, post length=2pt}]  (f) -- (2.1,-1.25);
\draw [->,line join=round,decorate, decoration={zigzag, segment length=6, amplitude=.9,post=lineto, post length=2pt}]  (f)  -- (0.2,-1.2);
\end{tikzpicture}\caption{The Zariski spectrum of the ring $\mathbb{Z}[x]/(x^2-1)$.}\label{fig:2xSpecZglued}
\end{figure}

The closure of the point $(x-1)$ is the set
\begin{equation*}
    \{(x-1)\}\cup\{(p,x-1)~|~p\text{ prime}\}
\end{equation*}
which is not open (its complement is clearly not closed).

\end{example}

\subsection{Tensor-Triangular Geometry}

Tensor-triangular geometry was first introduced in \cite{Bal05} to unify various areas of mathematics, for example stable homotopy theory, algebraic geometry, and representation theory. This is done by studying arbitrary tensor-triangulated (tt-)categories, which are triangulated categories with a symmetric monoidal structure. When dealing with essentially small tt-categories, that is, those tt-categories which have only a set of objects and morphisms, the thick tensor ideals are of particular interest.

\begin{definition}
    (\cite{Bal05}, Definition 1.2.) A \textbf{thick tensor (tt-)ideal} $\I$ of an essentially small tt-category $\K$ is a full subcategory of $\K$, containing 0, such that:
    \begin{enumerate}
        \item $\I$ is \textbf{triangulated}, i.e., for any exact triangle $x\rightarrow y\rightarrow z\rightarrow\Sigma x$ in $\K$ such that two of $x,y,z$ are in $\I$, so is the third.
        \item $\I$ is \textbf{thick}, i.e., $\I$ is closed under taking direct summands (if $x\oplus y\in\I$ then $x,y\in\I$).
        \item $\I$ is an \textbf{ideal}, i.e., if $x\in\I$ and $k\in\K$ then $x\otimes k\in\I$.
    \end{enumerate}
    The tt-ideal generated by an object $x\in\K$, that is the smallest tt-ideal containing $x$, is denoted by thick$^\otimes(x)$, and the set of all tt-ideals of $\K$ is denoted by Th$^\otimes(\K)$.
\end{definition}

We now restrict our discussion to the \textbf{radical tt-ideals}, those tt-ideals $\I$ where $x^{\otimes n}\in\I$ implies that $x\in\I$. By \cite[Theorem 3.1.9.]{KP17}, the collection of all radical tt-ideals of an essentially small tt-category $\K$, $\ThickK$, forms a spatial frame. Hence, Stone duality gives us a sober space $S(\ThickK)$ where the frame of open sets of $S(\ThickK)$ is isomorphic to the frame $\ThickK$. The authors of \cite{KP17} then go on to show in Proposition 3.4.1. that $S(\ThickK)$ is homeomorphic to the Hochster dual of the usual Balmer spectrum of $\K$ (\cite[Definition 2.1.]{Bal05}). That is, $S(\ThickK)$ is homeomorphic to $\SpcK$ equipped with the inverse topology, defined by taking the Thomason subsets of $\SpcK$ as open sets, where a \textbf{Thomason set} is an arbitrary union of closed sets each with quasi-compact complement. As a result of this homeomorphism, we will henceforth denote the space $S(\ThickK)$ as $\SpcKinv$. The open sets of $\SpcKinv$ are given by the supports of tt-ideals of $\K$, that is, for $\mathcal{I}\subseteq\K$ we have $\supp(\I)=\{\mathcal{P}\in\text{Spc}(\mathcal{K})^\vee~|~\mathcal{I}\nsubseteq\mathcal{P}\}$.

The isomorphism
\begin{equation*}
    \ThickK\cong\Omega(\SpcKinv)
\end{equation*}
will therefore allow us to compare properties of a given collection of radical tt-ideals with topological properties of the Balmer spectrum. In particular, an application of Theorem \ref{TheoremJohnstone} tells us that the frame $\ThickK$ will satisfy the second De Morgan law if and only if $\SpcKinv$ is extremally disconnected. In Section \ref{SectionttDML}, we will define the notion of Baer tt-categories, thus giving an intrinsic definition of those tt-categories which satisfy these equivalent conditions.

\begin{example}\label{ExampleThomason}
    One of the main examples of tt-categories that we will consider are the derived categories of commutative rings. If we restrict to the perfect complexes within these categories, i.e., those which are quasi-isomorphic to a bounded complex of finitely generated projective modules, we get an essentially small tt-category, denoted by $\mathsf{D}^{\mathrm{perf}}(R)$ for $R$ a commutative ring. A useful result of Thomason (\cite[Theorem 3.15.]{Tho97}) allows us to easily compute the Balmer spectrum of $\mathsf{D}^{\mathrm{perf}}(R)$ as
    \begin{equation*}
        \mathrm{Spc(}\mathsf{D}^{\mathrm{perf}}(R))\cong\mathrm{Spec}(R).
    \end{equation*}
\end{example}

\section{The Second Tensor-Triangular De Morgan Law}\label{SectionttDML}

\subsection{Tensor-Triangular De Morgan Laws}

In this section, we want to classify those essentially small tt-categories whose frame of radical tt-ideals satisfies the second De Morgan law. Analogously to the algebraic De Morgan laws of Niefield and Rosenthal, the first step is to derive tensor-triangular De Morgan laws. For this, we need to define the pseudocomplement in the frame $\ThickK$.

\begin{definition}
    Let $\mathcal{K}$ be an essentially small tt-category and let $\mathcal{S}\subseteq\mathcal{K}$ be a subcategory. Then the \textbf{tt-annihilator ideal} of $\mathcal{S}$ is the tt-ideal $\mathcal{S}^*:=(\mathcal{S}\Rightarrow0)=\{k\in\mathcal{K}~|~k\otimes\mathcal{S}\simeq0\}$.
\end{definition}

\begin{lemma}\label{LemttAnnRadical}
    Let $\mathcal{K}$ be an essentially small tt-category with no nilpotent objects. Then every tt-annihilator ideal of $\mathcal{K}$ is radical.
\end{lemma}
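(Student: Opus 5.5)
Let $\mathcal{S}\subseteq\K$ be a subcategory. We must show that if $x^{\otimes n}\in\mathcal{S}^*$ for some $n\geq1$, then $x\in\mathcal{S}^*$, i.e.\ $x\otimes s\simeq 0$ for every $s\in\mathcal{S}$. The plan is to fix $s\in\mathcal{S}$ and study the object $y:=x\otimes s$, showing that $y$ is $\otimes$-nilpotent. The hypothesis that $\K$ has no nilpotent objects then forces $y\simeq 0$.

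The key computation uses the symmetric monoidal structure to rearrange tensor factors:
\begin{equation*}
    y^{\otimes n}=(x\otimes s)^{\otimes n}\simeq x^{\otimes n}\otimes s^{\otimes n}\simeq (x^{\otimes n}\otimes s)\otimes s^{\otimes (n-1)}.
\end{equation*}
Since $x^{\otimes n}\in\mathcal{S}^*$ and $s\in\mathcal{S}$, we have $x^{\otimes n}\otimes s\simeq 0$. Tensoring the zero object with anything gives zero, because $-\otimes k$ is exact and hence additive. Therefore $y^{\otimes n}\simeq 0$. So $y$ is nilpotent, and by assumption $y\simeq 0$. As $s$ was arbitrary, $x\in\mathcal{S}^*$.

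The definition already asserts that $\mathcal{S}^*$ is a tt-ideal; I take this as given, though it is easily checked: closure under triangles and summands follows from exactness and additivity of $-\otimes s$, and the ideal property follows from associativity. There is no real obstacle. The only point needing care is to interpret ``no nilpotent objects'' as meaning that $z^{\otimes n}\simeq 0$ implies $z\simeq 0$. The case $n=1$ is trivial, and the symmetry isomorphisms are used only up to isomorphism, which suffices since $\mathcal{S}^*$ is closed under isomorphism.
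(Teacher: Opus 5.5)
Your proposal is correct and follows the paper's argument: both show $(x\otimes k)^{\otimes n}\simeq x^{\otimes n}\otimes k^{\otimes n}\simeq 0$ for each $k$ and then use the absence of nilpotent objects to conclude $x\otimes k\simeq 0$. The only cosmetic difference is that you work with an arbitrary subcategory $\mathcal{S}$, whereas the paper works with a tt-ideal $\mathcal{I}$.
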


\begin{proof}
    Let $x\in\sqrt{\mathcal{I}^*}$ for some tt-ideal $\mathcal{I}\subseteq\mathcal{K}$. Then $x^{\otimes n}\otimes\mathcal{I}\simeq0$ for some $n\in\mathbb{N}$, i.e., $x^{\otimes n}\otimes k\simeq0$ for all $k\in\mathcal{I}$, and so, $0\simeq x^{\otimes n}\otimes k^{\otimes n}\simeq(x\otimes k)^{\otimes n}$. Now, since $\mathcal{K}$ has no nilpotent objects, we have that $x\otimes k\simeq 0$ for all $k\in\mathcal{I}$. Hence, $x\in\mathcal{I}^*$, so $\I^*$ is radical.
\end{proof}

With the pseudocomplement defined in the frame of radical tt-ideals, we are now in position to derive the De Morgan laws in this setting.

Let $\Omega(\SpcKinv)$ be the frame of open sets of the Hochster dual of the Balmer spectrum of an essentially small tt-category $\K$. We can then state the frame-theoretic De Morgan laws of Definition \ref{DefinitionFrameDeMorganlaws} for this situation.
\begin{enumerate}
    \item $\neg(\supp(\mathcal{I})\vee\supp(\mathcal{J})) = \neg\supp(\mathcal{I})\wedge\neg\supp(\mathcal{J})$
    \item $\neg(\supp(\mathcal{I})\wedge\supp(\mathcal{J})) = \neg\supp(\mathcal{I})\vee\neg\supp(\mathcal{J})$
\end{enumerate}
for tt-ideals $\mathcal{I},\mathcal{J}\subseteq\mathcal{K}$. We must clarify what is meant by $\neg\supp(\mathcal{I})$.
\begin{equation*}
    \begin{split}
        \neg\supp(\mathcal{I}) &= \big(\supp(\mathcal{I})\Rightarrow\supp(0)\big)\\ &= \bigvee\{\supp(\mathcal{J})\in \Omega(\text{Spc}(\mathcal{K})^\vee)~|~ \supp(\mathcal{I}\otimes\mathcal{J})= \supp(0)\}\\ &= \supp\big(\Thick(\J\subseteq\K~|~\I\otimes\J\simeq0)\big) = \supp(\mathcal{I}^*).
    \end{split}
\end{equation*}
Returning to the first De Morgan law, we arrive at the following:
\begin{equation*}
    \begin{split}
    \neg(\supp(\mathcal{I})\vee\supp(\mathcal{J})) &= \neg\supp(\mathcal{I})\wedge\neg\supp(\mathcal{J})\\
    \Longleftrightarrow~~~\neg(\supp(\Thick(\I,\J))) &= \supp(\mathcal{I}^*)\wedge\supp(\mathcal{J}^*)\\
    \Longleftrightarrow~~~\supp(\Thick(\I,\J)^*) &= \supp(\mathcal{I}^*\cap\mathcal{J}^*)\\
    \Longleftrightarrow~~~\Thick(\I,\J)^* &= \mathcal{I}^*\cap\mathcal{J}^*.
    \end{split}
\end{equation*}
The second tt-De Morgan law is derived similarly.

\begin{definition}\label{DefinitionttDeMorganlaws}
    Let $\mathcal{K}$ be an essentially small tt-category. Then $\mathcal{K}$ satisfies:
    \begin{enumerate}
        \item The \textbf{first tt-De Morgan law} if $\Thick(\I,\J)^* = \mathcal{I}^*\cap\mathcal{J}^*$ for all tt-ideals $\mathcal{I},\mathcal{J}\subseteq\mathcal{K}$.
        \item The \textbf{second tt-De Morgan law} if $(\mathcal{I}\cap\mathcal{J})^* = \Thick(\I^*,\J^*)$ for all tt-ideals $\mathcal{I},\mathcal{J}\subseteq\mathcal{K}$.
    \end{enumerate}
\end{definition}

In analogy with the ring-theoretic case, we expect that every essentially small tt-category satisfies the first tt-De Morgan law, and one inclusion of the second.

\begin{proposition}\label{Prop1ttDML}
    Let $\mathcal{K}$ be an essentially small tt-category. Then $\mathcal{K}$ satisfies the first tt-De Morgan law.
\end{proposition}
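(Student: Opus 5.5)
We prove the two inclusions separately, following the proof of the first algebraic De Morgan law for commutative rings. For the inclusion $\Thick(\I,\J)^*\subseteq\I^*\cap\J^*$ we use that taking tt-annihilators is order-reversing. Suppose $\mathcal{S}\subseteq\mathcal{S}'$ and $k\otimes\mathcal{S}'\simeq0$. Then in particular $k\otimes\mathcal{S}\simeq0$, so $\mathcal{S}'^*\subseteq\mathcal{S}^*$. Since $\I$ and $\J$ are both contained in $\Thick(\I,\J)$, we get $\Thick(\I,\J)^*\subseteq\I^*$ and $\Thick(\I,\J)^*\subseteq\J^*$, and hence the first inclusion.

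For the reverse inclusion, fix $k\in\I^*\cap\J^*$. We must show that $k\otimes x\simeq0$ for every $x\in\Thick(\I,\J)$. The ring-theoretic proof writes an element of $A+B$ as $a+b$, but there is no such explicit description of $\Thick(\I,\J)$. Instead we consider the full subcategory
\begin{equation*}
    \mathcal{L}_k:=\{x\in\K~|~k\otimes x\simeq0\}=\{k\}^*.
\end{equation*}
We check directly that $\mathcal{L}_k$ is a tt-ideal.
\begin{enumerate}
    \item It contains $0$.
    \item It is triangulated. The functor $k\otimes-$ is exact, so it sends an exact triangle $x\to y\to z\to\Sigma x$ to an exact triangle. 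If two of its terms are zero, the third is zero too.
    \item It is thick. We have $k\otimes(x\oplus y)\simeq(k\otimes x)\oplus(k\otimes y)$, and a summand of a zero object is zero.
    \item It is an ideal. If $x\in\mathcal{L}_k$ and $m\in\K$, then $k\otimes(x\otimes m)\simeq(k\otimes x)\otimes m\simeq0$ by associativity.
\end{enumerate}

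By the choice of $k$, both $\I$ and $\J$ lie in $\mathcal{L}_k$. Since $\Thick(\I,\J)$ is the smallest tt-ideal containing $\I\cup\J$, we get $\Thick(\I,\J)\subseteq\mathcal{L}_k$. This says exactly that $k\in\Thick(\I,\J)^*$, which gives the reverse inclusion.

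The main obstacle is the step just described: the join $\Thick(\I,\J)$ has no elementwise description. It is resolved by the minimality argument, which only needs the routine check that $\{k\}^*$ is closed under the tt-ideal operations. Radicality plays no role in this argument, so no hypothesis on nilpotent objects is needed. The proposition therefore holds for an arbitrary essentially small tt-category, as stated.
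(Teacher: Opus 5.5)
Your proof is correct and follows essentially the same argument as the paper. The first inclusion uses that annihilators are order-reversing. The reverse inclusion observes that $\I,\J\subseteq\{k\}^*$ and then uses the minimality of $\Thick(\I,\J)$, the only difference being that you explicitly check $\{k\}^*$ is a tt-ideal, which the paper takes for granted when it writes $(x)^*$.
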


\begin{proof}
    Since $\I,\J\subseteq\Thick(I,J)$, we have $\Thick(\I,\J)^*\subseteq\I^*,\J^*$. Hence, $\Thick(\I,\J)^*\subseteq\I^*\cap\J^*$.
    
    Conversely, let $x\in\mathcal{I}^*\cap\mathcal{J}^*$. Then $\I\subseteq(x)^*$ and $\J\subseteq(x)^*$. Hence, $\Thick(\I,\J)\subseteq(x)^*$ which implies that $x\in(x)\subseteq\Thick(\I,\J)^*$, and thus $\mathcal{I}^*\cap\mathcal{J}^*\subseteq\Thick(\I,\J)^*$.
\end{proof}

\begin{proposition}\label{Prop2ttDMLInclusion}
    Let $\mathcal{K}$ be an essentially small tt-category. Then $(\mathcal{I}\cap\mathcal{J})^* \supseteq \Thick(\mathcal{I}^*,\mathcal{J}^*)$ for all tt-ideals $\mathcal{I},\mathcal{J}\subseteq\mathcal{K}$.
\end{proposition}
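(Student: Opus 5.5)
The plan is to show that $(\I\cap\J)^*$ is a tt-ideal containing both $\I^*$ and $\J^*$. Then, since $\Thick(\I^*,\J^*)$ is by definition the smallest tt-ideal containing $\I^*$ and $\J^*$, the inclusion follows immediately. This mirrors the ring-theoretic argument given before Example \ref{ExampleNot2DML}. There, an element of $A^*+B^*$ was written as a sum; here there is no direct elementwise description of $\Thick(\I^*,\J^*)$, so I use its minimality instead.

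\textbf{Step 1: annihilators are tt-ideals.} For any subcategory $\mathcal{S}$, $\mathcal{S}^*$ is a thick tensor ideal. It contains $0$. It is an ideal: if $x\otimes s\simeq 0$ for all $s\in\mathcal{S}$, then $(x\otimes k)\otimes s\simeq k\otimes(x\otimes s)\simeq 0$. It is closed under direct summands: if $(x\oplus y)\otimes s\simeq 0$, then $x\otimes s\oplus y\otimes s\simeq 0$, so both summands vanish. It is triangulated: given an exact triangle $x\to y\to z\to\Sigma x$ with two terms in $\mathcal{S}^*$, tensoring with $s$ is exact, so we get an exact triangle $x\otimes s\to y\otimes s\to z\otimes s\to\Sigma x\otimes s$ with two terms zero, and hence the third is zero as well. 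The paper already calls $\mathcal{S}^*$ a tt-ideal in the definition, so this step can be cited briefly.

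\textbf{Step 2: antitonicity.} If $\mathcal{S}\subseteq\mathcal{T}$, then $\mathcal{T}^*\subseteq\mathcal{S}^*$, exactly as in the proof of Proposition \ref{Prop1ttDML}. Since $\I\cap\J\subseteq\I$ and $\I\cap\J\subseteq\J$, this gives $\I^*\subseteq(\I\cap\J)^*$ and $\J^*\subseteq(\I\cap\J)^*$.

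\textbf{Step 3: conclusion.} By Steps 1 and 2, $(\I\cap\J)^*$ is a tt-ideal containing $\I^*\cup\J^*$. By minimality of $\Thick(\I^*,\J^*)$, we get $\Thick(\I^*,\J^*)\subseteq(\I\cap\J)^*$.

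I do not expect a genuine obstacle. The only point needing care is Step 1, specifically that $-\otimes s$ is exact and that an object isomorphic to zero in the triangle forces the third term to vanish. Both are standard in any tt-category.
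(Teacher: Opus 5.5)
Your proposal is correct and follows the same route as the paper: antitonicity of $(-)^*$ applied to $\I\cap\J\subseteq\I,\J$, then minimality of $\Thick(\I^*,\J^*)$. Your Step 1 spells out that $(\I\cap\J)^*$ is a tt-ideal, which the paper takes for granted from the definition of the tt-annihilator ideal.
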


\begin{proof}
    Since $\I\cap\J\subseteq\I,\J$, we have that $\I^*,\J^*\subseteq(\I\cap\J)^*$. Hence, $\Thick(\I^*,\J^*)\subseteq(\I\cap\J)^*$.
\end{proof}

However, not every essentially small tt-category satisfies this second tt-De Morgan law. To see an example of this, we first include the following lemmas which provide an alternative characterisation of the tt-annihilator ideals.

\begin{lemma}\label{LemInteriors}
    Let $\mathcal{I}$ and $\mathcal{J}$ be tt-ideals of an essentially small tt-category $\mathcal{K}$ with no nilpotent objects. Then $\mathcal{I}^*\subseteq\mathcal{J}^*$ if and only if $U(\mathcal{I})^\circ\subseteq U(\mathcal{J})^\circ$ in $\SpcKinv$, where $U(\I)=\{\P\in\SpcK~|~\I\subseteq\P\}$.
\end{lemma}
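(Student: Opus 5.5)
The plan is to identify $U(\mathcal{I})^\circ$ with $\supp(\mathcal{I}^*)$ and then use that $\supp$ is injective on radical tt-ideals. Here the interior is taken in $\SpcKinv$. Note that $U(\mathcal{I})$ is precisely the complement of the open set $\supp(\mathcal{I})$ in $\SpcKinv$. The interior of the complement of an open set $O$ is the largest open set disjoint from $O$, which is the pseudocomplement $\neg O$ in the frame $\Omega(\SpcKinv)$. The computation just before Definition \ref{DefinitionttDeMorganlaws} shows formally that $\neg\supp(\mathcal{I})=\supp(\mathcal{I}^*)$. I would redo this carefully under the hypothesis of no nilpotent objects, giving
\begin{equation*}
U(\mathcal{I})^\circ=\neg\supp(\mathcal{I})=\supp(\mathcal{I}^*).
\end{equation*}

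To justify this, every open set of $\SpcKinv$ has the form $\supp(\mathcal{J})$ for a (radical) tt-ideal $\mathcal{J}$. Since primes are prime, for any prime $\mathcal{P}$ we have $\mathcal{I}\otimes\mathcal{J}\subseteq\mathcal{P}$ if and only if $\mathcal{I}\subseteq\mathcal{P}$ or $\mathcal{J}\subseteq\mathcal{P}$. Hence
\begin{equation*}
\supp(\mathcal{I})\cap\supp(\mathcal{J})=\supp(\Thick(\mathcal{I}\otimes\mathcal{J})).
\end{equation*}
This is empty if and only if $\Thick(\mathcal{I}\otimes\mathcal{J})$ lies in every prime, i.e. in $\sqrt{0}$. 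With no nilpotent objects, $\sqrt{0}=0$, so this happens exactly when $\mathcal{I}\otimes\mathcal{J}\simeq 0$, i.e. $\mathcal{J}\subseteq\mathcal{I}^*$. Thus the open sets disjoint from $\supp(\mathcal{I})$ are exactly the $\supp(\mathcal{J})$ with $\mathcal{J}\subseteq\mathcal{I}^*$. The largest of these is $\supp(\mathcal{I}^*)$, which is itself disjoint from $\supp(\mathcal{I})$ by the same argument with $\mathcal{J}=\mathcal{I}^*$.

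Given this identification, the lemma reduces to showing $\mathcal{I}^*\subseteq\mathcal{J}^*$ if and only if $\supp(\mathcal{I}^*)\subseteq\supp(\mathcal{J}^*)$. The forward direction is monotonicity of $\supp$. For the converse, Lemma \ref{LemttAnnRadical} says $\mathcal{I}^*$ and $\mathcal{J}^*$ are radical. The isomorphism $\ThickK\cong\Omega(\SpcKinv)$ (via $\supp$) is an order isomorphism on radical tt-ideals, so inclusion of supports gives inclusion of the ideals.

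The main obstacle is the middle step, namely the identity $\supp(\mathcal{I})\cap\supp(\mathcal{J})=\supp(\Thick(\mathcal{I}\otimes\mathcal{J}))$ together with $\sqrt{0}=0$. This is where the no-nilpotents hypothesis is genuinely used; without it one would only obtain $U(\mathcal{I})^\circ=\supp(\sqrt{\mathcal{I}^*})$. I would handle it by the prime-ideal argument above, using that the radical of $0$ is the intersection of all primes.
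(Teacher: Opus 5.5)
Your proof is correct, but it is organized differently from the paper's. The paper never computes $U(\mathcal{I})^\circ$ as a whole. It works with the basic open sets $\supp(x)$, $x\in\mathcal{K}$, and the elementwise criterion that $x\in\mathcal{J}^*$ if and only if $\supp(x)\cap\supp(\mathcal{J})=\varnothing$; this is where the absence of nilpotents enters. For the forward direction it picks $\mathcal{P}\in\supp(x)\subseteq U(\mathcal{I})$, deduces $x\in\mathcal{I}^*\subseteq\mathcal{J}^*$, and hence $\supp(x)\subseteq U(\mathcal{J})$. For the converse it takes $x\in\mathcal{I}^*$, observes $\supp(x)\subseteq U(\mathcal{I})^\circ\subseteq U(\mathcal{J})^\circ$, and reads off $x\in\mathcal{J}^*$. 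So the paper needs neither Lemma \ref{LemttAnnRadical} nor the bijection between radical tt-ideals and open subsets of $\SpcKinv$.

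You instead prove the global identity $U(\mathcal{I})^\circ=\supp(\mathcal{I}^*)$ and then reflect inclusions through that bijection, which is why you need radicality of annihilators. This costs a bit more machinery, but the identity is the more informative statement. It makes rigorous the formal computation $\neg\supp(\mathcal{I})=\supp(\mathcal{I}^*)$ behind the tt-De Morgan laws, and after taking complements it is exactly Lemma \ref{LemmattClosure}, which the paper proves separately later. You could also avoid radicality: from $\supp(\mathcal{I}^*)\subseteq\supp(\mathcal{J}^*)$, any $x\in\mathcal{I}^*$ satisfies $\supp(x)\cap\supp(\mathcal{J})=\varnothing$, hence $x\in\mathcal{J}^*$ by your own disjointness argument.

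One correction to your closing aside, which does not affect the proof. Without the no-nilpotents hypothesis the interior is $\supp(\{x\in\mathcal{K}\mid x\otimes\mathcal{I}\subseteq\sqrt{0}\})$. This can be strictly larger than $\supp(\sqrt{\mathcal{I}^*})=\supp(\mathcal{I}^*)$. For instance, if $\mathcal{I}=\sqrt{0}\neq 0$, then $U(\mathcal{I})^\circ=\SpcKinv$. But $\mathbbm{1}\notin\sqrt{\mathcal{I}^*}$, so $\supp(\mathcal{I}^*)\neq\SpcKinv$.
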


\begin{proof}
    Suppose $\mathcal{I}^*\subseteq\mathcal{J}^*$ and let $\mathcal{P}\in U(\mathcal{I})^\circ$. Then there is an object $x\in\mathcal{K}$ such that $\mathcal{P}\in\supp(x)\subseteq U(\mathcal{I})$. Hence, $\supp(x\otimes\mathcal{I}) = \supp(x)\cap\supp(\mathcal{I})=\varnothing$, which implies that $x\otimes\mathcal{I}\simeq 0$. Therefore, $x\in\mathcal{I}^*\subseteq\mathcal{J}^*$, i.e., $x\otimes\mathcal{J}\simeq 0$. Hence, all primes which do not contain $x$ must contain $\mathcal{J}$, i.e., $\supp(x)\subseteq U(\mathcal{J})$, and so, $\mathcal{P}\in\supp(x)\subseteq U(\mathcal{J})^\circ$.

    Conversely, suppose $U(\mathcal{I})^\circ\subseteq U(\mathcal{J})^\circ$ and let $x\in\mathcal{I}^*$. Then $\supp(x)\subseteq U(\mathcal{I})^\circ\subseteq U(\mathcal{J})^\circ$, and hence, $\supp(x\otimes\mathcal{J}) = \supp(x)\cap\supp(\mathcal{J})=\varnothing$. Therefore, $x\otimes\mathcal{J}\simeq0$ and $x\in\mathcal{J}^*$.
\end{proof}

\begin{lemma}\label{LemAnniffOA}
    Let $\mathcal{I}$ be a tt-ideal of an essentially small tt-category $\mathcal{K}$ with no nilpotent objects. Then $\mathcal{I}$ is a tt-annihilator ideal if and only if $\mathcal{I}=O_A=\{k\in\mathcal{K}~|~A\subseteq U(k)\}=\bigcap_{\mathcal{P}\in A}\mathcal{P}$ for some open subset $A\subseteq\SpcKinv$.
\end{lemma}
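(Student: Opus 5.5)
We prove the two implications separately, working throughout in $\SpcKinv$, where the open sets are the supports $\supp(\mathcal{J})$ of (radical) tt-ideals. The basic tool is the identity already used in the proof of Lemma \ref{LemInteriors}: for an object $x$ and a tt-ideal $\mathcal{I}$ we have $x\otimes\mathcal{I}\simeq0$ if and only if $\supp(x)\cap\supp(\mathcal{I})=\varnothing$, that is, if and only if $\supp(x)\subseteq U(\mathcal{I})$. This uses the absence of nilpotent objects, since an object with empty support is nilpotent and hence zero.

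For the forward direction, suppose $\mathcal{I}=\mathcal{S}^*$ for some subcategory $\mathcal{S}$; we may replace $\mathcal{S}$ by the tt-ideal $\mathcal{J}$ it generates without changing the annihilator. Set $A:=U(\mathcal{J})^\circ$, the interior taken in $\SpcKinv$; this is open. The claim is that $\mathcal{J}^*=O_A$.
\begin{itemize}
\item If $k\in\mathcal{J}^*$, then $\supp(k)\subseteq U(\mathcal{J})$. Since $\supp(k)$ is open, it lies in $A$, and hence $A\cap\supp(k)=\varnothing$. But $\P\notin\supp(k)$ means exactly that $k\in\P$, so $A\subseteq U(k)$ and $k\in O_A$.
\item Conversely, if $A\subseteq U(k)$, then $\supp(k)\cap A=\varnothing$. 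Because $\supp(k)$ is open and $A$ is the largest open set contained in $U(\mathcal{J})$, we would like to conclude $\supp(k)\subseteq U(\mathcal{J})$.
\end{itemize}

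This last inference is the expected obstacle, because disjointness from the interior does not force containment. The fix is to change the choice of $A$. Take $A:=\supp(\mathcal{J})$ instead. Then $k\in\mathcal{J}^*$ if and only if $\supp(k)\cap\supp(\mathcal{J})=\varnothing$, if and only if $\supp(\mathcal{J})\subseteq U(k)$, if and only if $k\in O_A$. With this choice the forward direction is immediate. The equality $O_A=\bigcap_{\P\in A}\P$ is just the definition: $A\subseteq U(k)$ means $k\in\P$ for all $\P\in A$.

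For the converse, let $A$ be any open set of $\SpcKinv$. Then $A=\supp(\mathcal{J})$ for some radical tt-ideal $\mathcal{J}$, by the frame isomorphism $\ThickK\cong\Omega(\SpcKinv)$. The same chain of equivalences shows that $O_A=\mathcal{J}^*$, so $O_A$ is a tt-annihilator ideal. The only point to check carefully is the support identity $\supp(x\otimes\mathcal{J})=\supp(x)\cap\supp(\mathcal{J})$ together with the vanishing of objects with empty support under the no-nilpotents hypothesis, both of which are exactly as used in Lemma \ref{LemInteriors}.
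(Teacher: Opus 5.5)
Your final argument is correct. The converse direction is identical to the paper's, but the forward direction takes a different route.

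The paper uses the canonical witness $A=U(\mathcal{I})^\circ$, which depends only on $\mathcal{I}$. The inclusion $\mathcal{I}\subseteq O_A$ is immediate. For the reverse inclusion it applies Lemma \ref{LemInteriors} to turn $U(\mathcal{I})^\circ\subseteq U(x)^\circ$ into $\mathcal{I}^*\subseteq(x)^*$, and then concludes via $x\in(x)^{**}\subseteq\mathcal{I}^{**}=\mathcal{I}$.

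You instead write $\mathcal{I}=\mathcal{J}^*$ and take $A=\supp(\mathcal{J})$. Both directions then collapse to the single identity $O_{\supp(\mathcal{J})}=\mathcal{J}^*$, which is exactly what the paper proves in its converse. This is shorter and avoids both Lemma \ref{LemInteriors} and the double-annihilator identity.

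What the paper's version buys is a witness that is regular open and intrinsic to $\mathcal{I}$. Proposition \ref{PropositionBooleanIso} uses exactly this to prove surjectivity. You can recover it from your version: $O_A=O_{\overline{A}^\circ}$, because each $U(k)$ is closed and $A\subseteq\overline{A}^\circ\subseteq\overline{A}$. With $A=\supp(\mathcal{J})$, Lemma \ref{LemmattClosure} gives $\overline{A}^\circ=U(\mathcal{J}^*)^\circ=U(\mathcal{I})^\circ$, the paper's witness.

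Your abandoned first attempt is worse than you diagnose: the first bullet already fails. From $\supp(k)\subseteq A$ you get $A\cap\supp(k)=\supp(k)$, which is nonempty unless $k\simeq 0$, so it gives the opposite of disjointness. Indeed, by Lemma \ref{LemmattClosure}, $U(\mathcal{J})^\circ=\supp(\mathcal{J}^*)$, so $O_{U(\mathcal{J})^\circ}=\mathcal{J}^{**}$ rather than $\mathcal{J}^*$. The interior that works is $U(\mathcal{J}^*)^\circ$, which is the paper's choice. This does not affect the proof you actually give, but you should delete that passage rather than present it as a near-miss.
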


\begin{proof}
    Suppose $\mathcal{I}$ is a tt-annihilator ideal and let $x\in\mathcal{I}$. Then $U(\mathcal{I})^\circ\subseteq U(x)^\circ$ and so, $x\in O_{U(\mathcal{I})^\circ}$. Hence, $\mathcal{I}\subseteq O_{U(\mathcal{I})^\circ}$. Now let $x\in O_{U(\mathcal{I})^\circ}$. Then $U(\mathcal{I})^\circ\subseteq U(x)^\circ$ and by Lemma \ref{LemInteriors}, $\mathcal{I}^*\subseteq(x)^*$. Hence, $x\in(x)^{**}\subseteq\mathcal{I}^{**}=\mathcal{I}$, so $O_{U(\mathcal{I})^\circ}\subseteq\I$.

    Conversely, let $\mathcal{I}=O_A$ for some open subset $A\subseteq\text{Spc}(\mathcal{K})^\vee$, which is necessarily of the form $\supp(\mathcal{J})$ for some tt-ideal $\mathcal{J}\subseteq\mathcal{K}$. We claim that $\mathcal{I}=O_{\supp(\mathcal{J})}=\mathcal{J^*}$. To see this, first let $x\in\mathcal{J}^*$. Then $\supp(\mathcal{J})\subseteq U(x)$ so $x\in O_{\supp(\mathcal{J})}$. Now let $x\in O_{\supp(\mathcal{J})}$, i.e., $\supp(\mathcal{J})\subseteq U(x)$. Then $\supp(x\otimes\mathcal{J}) = \supp(x)\cap\supp(\mathcal{J})=\varnothing$, so $x\otimes\mathcal{J}\simeq0$ and $x\in\mathcal{J}^*$ as required.
\end{proof}

Recall that any radical tt-ideal $\I$ is given by the intersection of the prime tt-ideals of the closed set $U(\I)=\{\P\in\SpcKinv~|~\I\subseteq\P\}$. It is therefore intuitive that the tt-annihilator ideal $\I^*$ should be given by the intersection of the primes in the complement of $U(\I)$, namely, $\supp(\I)$.

\begin{example}\label{ExDZnotDML}
    Consider the essentially small tt-category $\DZ$, the perfect complexes of the derived category of $\mathbb{Z}$. By Thomason's result in Example \ref{ExampleThomason}, we have that $\SpcDZinv\cong\SpecZ^\vee$ which is as follows:

    \begin{figure}[H]
    \centering
\begin{tikzpicture}[yscale=-1]
\draw[white, fill=white] (0,0) circle (.3cm);
\node[circle] at (0,0) (e) {};
\draw[fill = black] (0,0) circle (0.1cm);
\draw[draw = black] (-4,-1.25) circle (0.1cm);
\draw[draw = black] (-2,-1.5) circle (0.1cm);
\draw[draw = black] (0,-1.75) circle (0.1cm);
\draw[draw = black] (2,-1.5) circle (0.1cm);
\node at (0,.5) {$\mathcal{P}_0$};
\node at (-4,-1.75) {$\mathcal{P}_2$};
\node at (-2,-2.0) {$\mathcal{P}_3$};
\node at (0,-2.25) {$\mathcal{P}_5$};
\node at (2,-2.0) {$\mathcal{P}_7$};
\node at (4,-1.75) {$\cdots$};
\draw [->,line join=round,decorate, decoration={zigzag, segment length=6, amplitude=.9,post=lineto, post length=2pt}]  (-3.75,-1.2)  -- (e);
\draw [->,line join=round,decorate, decoration={zigzag, segment length=6, amplitude=.9,post=lineto, post length=2pt}]  (3.75,-1.2) -- (e);
\draw [->,line join=round,decorate, decoration={zigzag, segment length=6, amplitude=.9,post=lineto, post length=2pt}]  (-1.8,-1.25) -- (e);
\draw [->,line join=round,decorate, decoration={zigzag, segment length=6, amplitude=.9,post=lineto, post length=2pt}]  (1.8,-1.25)  -- (e);
\draw [->,line join=round,decorate, decoration={zigzag, segment length=6, amplitude=.9,post=lineto, post length=2pt}]  (0.0,-1.4)  -- (e);
\end{tikzpicture}\caption{The Hochster dual of the Balmer spectrum of $\DZ$, the perfect complexes of the derived category of $\mathbb{Z}$.}\label{fig:SpcDZinv}
\end{figure}

where $\P_0=\{k\in\K~|~k\otimes\mathbb{Q}\simeq0\}$ and $\P_p=\{k\in\K~|~k\otimes\mathbb{F}_p\simeq0\}$ for $p\in\mathbb{Z}$ non-zero and prime. The open subsets of $\SpecZ^\vee$ are precisely the Thomason subsets of $\SpecZ$, which are given by unions of collections of non-zero prime ideals $(p)\unlhd\mathbb{Z}$. Hence, an open set of $\SpcDZinv$ is a union of prime tt-ideals $\P_p\in\SpcDZinv$ for $p\in\mathbb{Z}$ non-zero and prime.

For the purpose of this example, we will enumerate the minimal primes as $\{\P_1, \P_2, \P_3, ...\}$ and consider the tt-ideals
\begin{equation*}
    \E=\bigcap\limits_{n=2k}\P_n ~~~\text{and}~~~ \oo=\bigcap\limits_{n=2k+1}\P_n,
\end{equation*}
which are tt-ideals (in fact tt-annihilator ideals) by Lemma \ref{LemAnniffOA}. It is not difficult to see that $\E^*=\oo$ and $\oo^*=\E$. Then 
\begin{align*}
    (\E\cap\oo)^*&=(0)^*=\DZ,\\
    \text{while } ~ \Thick(\E^*,\oo^*)&=\Thick(\oo,\E)=\mathsf{D}^{\mathrm{perf}}_\mathrm{tors}(\mathbb{Z})\neq\DZ
\end{align*}

since the torsion-free objects, for example $\mathbb{Z}$, are not objects of $\Thick(\oo,\E)$. Hence, $\DZ$ does not satisfy the second tt-De Morgan law.
\end{example}

Although we have defined the tt-De Morgan laws for all tt-ideals of a given essentially small tt-category $\K$, Lemma \ref{LemttAnnRadical} tells us that when $\K$ has no nilpotent objects, all tt-ideals which appear in the tt-De Morgan laws are radical tt-ideals. We can therefore apply Johnstone's theorem (Theorem \ref{TheoremJohnstone}) to the frame of radical tt-ideals of $\K$, giving us the following corollary.

\begin{corollary}\label{Corollary2DMLiffED}
    Let $\mathcal{K}$ be an essentially small tt-category with no nilpotent objects. Then $\mathcal{K}$ satisfies the second tt-De Morgan law if and only if $\SpcKinv$ is extremally disconnected.
\end{corollary}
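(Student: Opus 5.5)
The strategy is to transport the second tt-De Morgan law along the frame isomorphism $\ThickK\cong\Omega(\SpcKinv)$, $\I\mapsto\supp(\I)$, and then apply Johnstone's theorem (Theorem \ref{TheoremJohnstone}). First, because $\ThickK$ is a spatial frame whose Stone dual is $\SpcKinv$, Theorem \ref{TheoremJohnstone} says that $\SpcKinv$ is extremally disconnected if and only if $\ThickK$ satisfies the frame-theoretic second De Morgan law. That law reads $\neg(\I\wedge\J)=\neg\I\vee\neg\J$ for all radical tt-ideals $\I,\J$. It therefore suffices to show that this frame law is equivalent to the second tt-De Morgan law of Definition \ref{DefinitionttDeMorganlaws}.

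Second, I identify the frame operations on $\ThickK$:
\begin{itemize}
\item the meet is intersection, since an intersection of radical tt-ideals is again a radical tt-ideal;
\item the join of $\I$ and $\J$ is the smallest radical tt-ideal containing both, namely $\sqrt{\Thick(\I,\J)}$;
\item the pseudocomplement is $\neg\I=\I^*$, by the computation $\neg\supp(\I)=\supp(\I^*)$ made before Definition \ref{DefinitionttDeMorganlaws}. Lemma \ref{LemttAnnRadical} guarantees that $\I^*$ really is radical, because $\K$ has no nilpotent objects.
\end{itemize}
So the frame law says $(\I\cap\J)^*=\sqrt{\Thick(\I^*,\J^*)}$ for radical $\I,\J$.

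Third, I compare this with the tt-law, which has no radical and ranges over all tt-ideals. There are two discrepancies to remove.

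\begin{enumerate}
\item \textbf{Arbitrary versus radical ideals.} For any tt-ideal $\I$ we have $\I^*=(\sqrt{\I})^*$. Indeed, $\I\subseteq\sqrt\I$ gives $(\sqrt\I)^*\subseteq\I^*$. Conversely, if $x\otimes\I\simeq0$ and $k^{\otimes n}\in\I$, then $(x\otimes k)^{\otimes n}\simeq0$, so $x\otimes k\simeq0$ because there are no nilpotents. The same argument gives $(\I\cap\J)^*=(\sqrt\I\cap\sqrt\J)^*$, using $\sqrt{\I\cap\J}=\sqrt\I\cap\sqrt\J$. Hence it is enough to test the tt-law on radical ideals.
\item \textbf{Showing $\Thick(\I^*,\J^*)$ is radical.} This is the main obstacle. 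If the tt-law holds, then $\Thick(\I^*,\J^*)$ equals the annihilator $(\I\cap\J)^*$, which is radical by Lemma \ref{LemttAnnRadical}. So the tt-law implies the frame law.

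For the converse, assume the frame law, so $(\I\cap\J)^*=\sqrt{\Thick(\I^*,\J^*)}$. I need $\sqrt{\Thick(\I^*,\J^*)}\subseteq\Thick(\I^*,\J^*)$. The argument goes as follows.
\begin{itemize}
\item Take $x$ with $x^{\otimes n}\in\Thick(\I^*,\J^*)$.
\item The tt-ideal generated by $\I^*$ and $\J^*$ is built from objects of the form $a\oplus b$, and $\I^*\otimes\J^*\subseteq(\I\cap\J)^*$. Rather than work this out at the level of objects, I will argue via supports.
\item Since $x\in(\I\cap\J)^*$, the open set $\supp(x)$ is contained in $\supp(\I^*)\cup\supp(\J^*)$.
\item Take $x\otimes\I^*$ and $x\otimes\J^*$, both of which lie in $\Thick(\I^*,\J^*)$. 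Their supports cover $\supp(x)$, so $\supp(x)=\supp(\Thick(x\otimes\I^*,x\otimes\J^*))$.
\item By the classification of radical tt-ideals, $x$ then lies in $\sqrt{\Thick(x\otimes\I^*,x\otimes\J^*)}$.
\end{itemize}
This still leaves a radical. The likely honest fix is that the tt-law should be read with radical closure, or equivalently that $\Thick$ denotes the join in $\ThickK$.

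I will first try to close the gap using the no-nilpotents hypothesis: an object $x$ whose power $x^{\otimes n}$ lies in an ideal $\mathcal{L}$ generated by objects should satisfy $x\otimes x^{\otimes (n-1)}\in\mathcal{L}$, and I would try to descend this by induction. If that fails, I will interpret $\Thick(-,-)$ in Definition \ref{DefinitionttDeMorganlaws} as the radical join, consistent with how it was derived from the frame law.
\end{enumerate}

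With this identification, the corollary follows directly from Theorem \ref{TheoremJohnstone}.
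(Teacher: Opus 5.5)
\textbf{Verdict: genuine gap.} Your route is the paper's: transport the second tt-law along $\ThickK\cong\Omega(\SpcKinv)$ and apply Theorem \ref{TheoremJohnstone}. Your reduction to radical $\I,\J$ is correct, and so is the implication tt-law $\Rightarrow$ frame law. The gap is the other implication. From $(\I\cap\J)^*=\sqrt{\Thick(\I^*,\J^*)}$ you never reach $(\I\cap\J)^*\subseteq\Thick(\I^*,\J^*)$, and neither of your fallbacks closes it.
\begin{itemize}
\item Descending from $x^{\otimes n}\in\mathcal{L}$ to $x\in\mathcal{L}$ is exactly the statement that $\mathcal{L}$ is radical. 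The no-nilpotents hypothesis only gives $\sqrt{0}=0$; it says nothing about radicality of a general tt-ideal such as $\Thick(\I^*,\J^*)$.
\item Reading $\Thick$ as the radical join changes Definition \ref{DefinitionttDeMorganlaws} rather than proving the corollary.
\end{itemize}
The paper passes this point in one line by citing Lemma \ref{LemttAnnRadical}. That lemma only covers annihilator ideals, not $\Thick(\I^*,\J^*)$, so you have found the real crux. It still has to be proved.

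The missing idea is to apply the frame law only to the pair $(\I,\I^*)$ and use that the unit is $\otimes$-idempotent.
\begin{itemize}
\item Since $\I\cap\I^*=0$, the frame law gives $\sqrt{\Thick(\I^*,\I^{**})}=0^*=\K$. Hence $\mathbbm{1}\simeq\mathbbm{1}^{\otimes n}\in\Thick(\I^*,\I^{**})$, and the radical disappears. This is the decomposition $\Thick(\I^*,\I^{**})=\K$ that also drives the converse in Theorem \ref{ThmBaer2ttDML}.
\item Now take $x\in(\I\cap\J)^*$, $a\in\I^{**}$ and $j\in\J$. The object $x\otimes a\otimes j$ lies in $\I^{**}$. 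It also lies in $\I^*$, because $j\otimes i\in\I\cap\J$ for every $i\in\I$.
\item Since $\I^*\cap\I^{**}=0$ (no nilpotents), $x\otimes a\otimes j\simeq0$, so $x\otimes\I^{**}\subseteq\J^*$. Trivially $x\otimes\I^*\subseteq\I^*$.
\item The subcategory $\{y\in\K~|~x\otimes y\in\Thick(\I^*,\J^*)\}$ is a tt-ideal, since $x\otimes-$ is exact and additive. It contains $\I^*$ and $\I^{**}$, hence $\mathbbm{1}$, hence $x\simeq x\otimes\mathbbm{1}\in\Thick(\I^*,\J^*)$.
\end{itemize}
Together with Proposition \ref{Prop2ttDMLInclusion}, this gives the second tt-law exactly as stated, with no radical closure. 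Only with this step does your identification of the two laws become an equivalence, and only then does Theorem \ref{TheoremJohnstone} finish the proof.
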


\subsection{Baer Tensor-Triangulated Categories}\label{SectionBaerTT}

We now restrict our discussion to essentially small \textbf{rigid} tt-categories, i.e., essentially small tt-categories such that every object is dualizable. This extra assumption is needed to guarantee that the only idempotent objects exist in idempotent triangles as in the following definition. We then work towards giving an intrinsic definition of those essentially small rigid tt-categories which satisfy the second tt-De Morgan law. Note that by definition, a rigid tt-category does not contain any nilpotent objects, so the previous results are valid.

\begin{definition}\label{DefinitionIdempotent}
    (\cite[Definition 3.2.]{BF11}) Let $\mathcal{K}$ be an essentially small rigid tt-category. An object $e\in\mathcal{K}$ is \textbf{left idempotent} if there is an exact triangle
    \begin{equation*}
        e \rightarrow \mathbbm{1} \rightarrow f \rightarrow \Sigma e
    \end{equation*}
    such that $e\otimes f\simeq0$, where $\mathbbm{1}$ denotes the $\otimes$-unit of $\K$. In this situation, $f$ is the corresponding \textbf{right idempotent} object. Without loss of generality, we shall refer to the left idempotent objects of $\mathcal{K}$ as \textbf{idempotents}.
\end{definition}

\begin{definition}\label{DefBaertt}
    Let $\mathcal{K}$ be an essentially small rigid tt-category. Then $\mathcal{K}$ is a \textbf{Baer tt-category} if every tt-annihilator ideal of $\mathcal{K}$ is generated by an idempotent object $e\in\K$.
\end{definition}

When we have an idempotent triangle as above, $\supp(e)=U(f)$ and $\supp(f)=U(e)$. That is, $e$ and $f$ both have clopen support. This should already indicate the link between Baer tt-categories and extremally disconnected Balmer spectra, a link which will be made precise in Corollary \ref{CorollaryBaerttED}.

We claim that these Baer tt-categories are precisely the essentially small rigid tt-categories which satisfy the second tt-De Morgan law. To prove this, we require some lemmas related to the tt-annihilator ideals of a given tt-category $\K$.

\begin{lemma}\label{LemProductsEqual}
    Let $\mathcal{K}$ be an essentially small tt-category and let $\mathcal{I},\mathcal{I'},\mathcal{J},\mathcal{J'}\subseteq\mathcal{K}$ be tt-ideals. If $\mathcal{I}^*=(\mathcal{I}')^*$ and $\mathcal{J}^*=(\mathcal{J}')^*$, then
    \begin{equation*}
        (\mathcal{I\otimes J})^*=(\mathcal{I'\otimes J'})^*.
    \end{equation*}
\end{lemma}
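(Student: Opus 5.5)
The plan is to reduce everything to a purely lattice-theoretic statement about annihilators, valid in any essentially small tt-category without extra hypotheses. Here $\mathcal{I}\otimes\mathcal{J}$ denotes the tt-ideal generated by the objects $i\otimes j$ with $i\in\I$, $j\in\J$. The key observation is the adjunction-type identity
\begin{equation*}
    (\mathcal{I}\otimes\mathcal{J})^* = \{k\in\K~|~k\otimes\I\subseteq\J^*\},
\end{equation*}
which follows from associativity of $\otimes$. Indeed, $k\otimes(i\otimes j)\simeq(k\otimes i)\otimes j$. Moreover, the annihilator of a set of objects equals the annihilator of the tt-ideal they generate, because $\{x~|~k\otimes x\simeq0\}$ is itself a tt-ideal. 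Exactness of $k\otimes-$ gives closure under triangles, additivity gives closure under summands, and associativity gives the ideal property. So $k\in(\I\otimes\J)^*$ if and only if $k\otimes i\otimes j\simeq0$ for all $i\in\I$ and $j\in\J$, which holds if and only if $k\otimes i\in\J^*$ for all $i\in\I$.

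With this identity the argument has two steps. First, replace $\J$ by $\J'$. If $\J^*=(\J')^*$, the right-hand side above does not change, so $(\I\otimes\J)^*=(\I\otimes\J')^*$. Second, replace $\I$ by $\I'$. Use symmetry of the tensor product, $\I\otimes\J'=\J'\otimes\I$ (up to isomorphism, hence equal as replete ideals). Applying the same identity with the roles swapped gives $(\J'\otimes\I)^*=\{k~|~k\otimes\J'\subseteq\I^*\}$, and this equals $(\J'\otimes\I')^*$ since $\I^*=(\I')^*$. Chaining the equalities gives $(\I\otimes\J)^*=(\I'\otimes\J')^*$.

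The only point needing care is the claim that annihilating a generating set is equivalent to annihilating the generated tt-ideal. Specifically, for fixed $k$, the collection $\{x~|~k\otimes x\simeq0\}$ must be a thick tensor ideal. This follows directly from exactness of $k\otimes-$ and associativity, so I do not expect a serious obstacle. In particular, no rigidity or radical hypothesis is required, which matches the generality in which the lemma is stated.
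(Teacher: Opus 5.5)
Your proof is correct and is essentially the paper's argument: both rewrite $k\otimes i\otimes j\simeq 0$ as $k\otimes i\in\mathcal{J}^*$ (or $k\otimes j\in\mathcal{I}^*$) using associativity and symmetry, and then replace one ideal at a time by the one with the same annihilator. Your explicit check that annihilating the products $i\otimes j$ is the same as annihilating the tt-ideal they generate fills in a point the paper passes over silently.
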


\begin{proof}
    Let $x\in(\mathcal{I\otimes J})^*$, then for all $i\in\mathcal{I},j\in\mathcal{J}$, we have $x\otimes i\otimes j\simeq 0$. Hence, $x\otimes j\in\mathcal{I}^*$ for all $j\in\mathcal{J}$. But $\mathcal{I}^*=(\mathcal{I}')^*$, so $x\otimes i'\otimes j\simeq0$ for all $i'\in\mathcal{I'},j\in\mathcal{J}$, i.e., $x\otimes i'\in\mathcal{J}^*$ for all $i'\in\mathcal{I}'$. Then since $\mathcal{J}^*=(\mathcal{J}')^*$, $x\otimes i'\otimes j'\simeq0$ for all $i'\in\mathcal{I}',j'\in\mathcal{J}'$, i.e., $x\in(\mathcal{I'\otimes J'})^*$. The converse is shown similarly.
\end{proof}

\begin{lemma}\label{LemttIdempotents}
    Let $\mathcal{K}$ be an essentially small rigid tt-category and let $e_1,e_2\in\mathcal{K}$ be idempotent objects. Then
    \begin{equation*}
        (e_1\otimes e_2)^* = \Thick\big((e_1)^*,(e_2)^*\big).
    \end{equation*}
\end{lemma}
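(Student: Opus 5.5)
The plan is to identify both sides explicitly using the idempotent triangles $e_i\to\mathbbm{1}\to f_i\to\Sigma e_i$, with $e_i\otimes f_i\simeq0$. The first claim to establish is that $(e_i)^*=\Thick(f_i)$. The inclusion $\Thick(f_i)\subseteq(e_i)^*$ is immediate, since $(e_i)^*$ is a tt-ideal containing $f_i$. For the reverse inclusion, take $x\in(e_i)^*$ and tensor the triangle with $x$. This gives $x\otimes e_i\to x\to x\otimes f_i\to$, and since the first term vanishes, $x\simeq x\otimes f_i\in\Thick(f_i)$. The same argument gives $(f_i)^*=\Thick(e_i)$, so the annihilators of idempotents are exactly the tt-ideals generated by the complementary idempotents.

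Next, consider $e:=e_1\otimes e_2$. Tensoring the two triangles shows that $e$ is again a left idempotent, with right idempotent $f$ sitting in a triangle $e_1\otimes e_2\to\mathbbm{1}\to f\to$. By the octahedral axiom applied to the composite $e_1\otimes e_2\to e_1\to\mathbbm{1}$, there is a triangle
\[
e_1\otimes f_2\to f\to f_1\to\Sigma(e_1\otimes f_2).
\]
The third term of $e_1\otimes e_2\to e_1$ is $e_1\otimes f_2$, obtained by tensoring the second triangle with $e_1$. One also checks $e\otimes f\simeq0$: tensor the new triangle with $e_1\otimes e_2$ and use $e_2\otimes f_2\simeq0$ and $e_1\otimes f_1\simeq 0$. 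Applying the first paragraph to $e$ gives $(e_1\otimes e_2)^*=\Thick(f)$.

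The remaining claim is that $\Thick(f)=\Thick(f_1,f_2)$, which by the first paragraph equals $\Thick((e_1)^*,(e_2)^*)$. The inclusion $\Thick(f)\subseteq\Thick(f_1,f_2)$ follows from the octahedral triangle: $f_1$ lies in the ideal, and $e_1\otimes f_2$ lies in it because ideals are closed under tensoring, so $f$ lies in it by the two-out-of-three property. For the reverse inclusion, I would avoid another octahedron and instead use the annihilators directly. Since $e_1\otimes e_2\otimes f_1\simeq0$, we have $f_1\in(e_1\otimes e_2)^*=\Thick(f)$, and symmetrically $f_2\in\Thick(f)$.

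In fact the whole statement can be short-circuited. The inclusion $\supseteq$ holds trivially, as in Proposition \ref{Prop2ttDMLInclusion}. For $\subseteq$, take $x\in(e_1\otimes e_2)^*$ and tensor the triangle $e_1\otimes e_2\to\mathbbm{1}\to f\to$ with $x$, which gives $x\simeq x\otimes f\in\Thick(f)\subseteq\Thick(f_1,f_2)$.

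The main obstacle I expect is producing the triangle exhibiting $f$ as an extension of $f_1$ and $e_1\otimes f_2$. This needs a careful application of the octahedral axiom together with a check that the maps are compatible with tensoring, including that $e_1\otimes e_2\to\mathbbm{1}$ really factors as the tensor of the two structure maps. Rigidity is not essential to the argument beyond guaranteeing that the setting of Definition \ref{DefinitionIdempotent} applies.
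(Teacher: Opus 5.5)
Your proposal is correct, but it takes a longer route than the paper's proof.

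\textbf{The paper's route.} The paper never builds a complementary idempotent for $e_1\otimes e_2$. For $x\in(e_1\otimes e_2)^*$ it tensors only the triangle $e_1\to\mathbbm{1}\to f_1\to\Sigma e_1$ with $x$. It then observes that $x\otimes e_1\in(e_2)^*$, because $x\otimes e_1\otimes e_2\simeq0$, and that $x\otimes f_1\in(e_1)^*$. Two-out-of-three then gives $x\in\Thick((e_1)^*,(e_2)^*)$, and the other inclusion is the trivial one. This needs no octahedral axiom. It also never uses that $e_2$ is idempotent, so it actually proves $(e_1\otimes a)^*=\Thick((e_1)^*,(a)^*)$ for an arbitrary object $a$.

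\textbf{Your route.} You instead establish structural facts:
\begin{itemize}
\item $(e)^*=\Thick(f)$ for any idempotent triangle;
\item $e_1\otimes e_2$ is again idempotent, and its complement $f$ is an extension of $f_1$ by $e_1\otimes f_2$;
\item hence $(e_1\otimes e_2)^*=\Thick(f)=\Thick(f_1,f_2)$.
\end{itemize}
This costs an octahedron and requires both $e_i$ to be idempotent. In return it gives an explicit idempotent generator of the annihilator. Also, the identity $(f_i)^*=\Thick(e_i)$ that you record is exactly what the paper uses without comment in Theorem \ref{ThmBaer2ttDML}, where it writes $\mathcal{I}^*=(e_1)=(f_1)^*$.

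\textbf{Two small remarks.} First, your ``short-circuit'' still depends on the octahedral triangle to get $f\in\Thick(f_1,f_2)$, so the genuinely short argument is the paper's one. Second, the obstacle you anticipate is not really one. You can simply define $f$ as the cone of the composite $e_1\otimes e_2\to e_1\to\mathbbm{1}$. Nothing in your argument requires identifying that composite with the tensor of the two structure maps.
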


\begin{proof}
    Let $x\in (e_1\otimes e_2)^*$ and consider the idempotent triangle
    \begin{equation*}
        e_1\rightarrow\mathbbm{1}\rightarrow f_1\rightarrow\Sigma e_1.
    \end{equation*}
    Tensoring with $x$ gives 
    \begin{equation*}
        x\otimes e_1\rightarrow x\rightarrow x\otimes f_1\rightarrow\Sigma(x\otimes e_1).
    \end{equation*}
    Now, $x\otimes e_1\in(e_2)^*$ since $x\in (e_1\otimes e_2)^*$, and $x\otimes f_1\in(e_1)^*$. Hence, $(x\otimes e_1),(x\otimes f_1)\in\Thick\big((e_1)^*,(e_2)^*\big)$ and thus, $x\in\Thick\big((e_1)^*,(e_2)^*\big)$ by the 2 out of 3 property.

    For the other inclusion, let $x\in(e_1)^*$. Then clearly $x\in(e_1\otimes e_2)^*$, since $x\otimes e_1\simeq0$. Hence, $(e_1)^*\subseteq(e_1\otimes e_2)^*$, and similarly $(e_2)^*\subseteq(e_1\otimes e_2)^*$, so $\Thick\big((e_1)^*,(e_2)^*\big)\subseteq(e_1\otimes e_2)^*$.
\end{proof}

\begin{theorem}\label{ThmBaer2ttDML}
    Let $\mathcal{K}$ be an essentially small rigid tt-category. Then $\mathcal{K}$ is a Baer tt-category if and only if $\mathcal{K}$ satisfies the second tt-De Morgan law.
\end{theorem}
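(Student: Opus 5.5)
We prove the two implications separately. The forward direction uses the two lemmas just proved; the reverse direction uses the topology of $\SpcKinv$.

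\emph{Baer $\Rightarrow$ second tt-De Morgan law.} By Proposition \ref{Prop2ttDMLInclusion} it suffices to show $(\I\cap\J)^*\subseteq\Thick(\I^*,\J^*)$ for arbitrary tt-ideals $\I,\J$.
\begin{enumerate}
\item Since $\K$ is Baer, the annihilators $\I^*$ and $\J^*$ are generated by idempotents. Using the complementary idempotent triangle, we instead choose idempotents $e_1,e_2$ with $\I^*=(e_1)^*$ and $\J^*=(e_2)^*$. Concretely, if $\I^{**}=\Thick(e_1)$ then $\I^*=\I^{***}=(e_1)^*$.
\item We claim $(\I\cap\J)^*=(\I\otimes\J)^*$. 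The inclusion $\I\otimes\J\subseteq\I\cap\J$ gives one direction. Conversely, $\K$ has no nilpotents and every annihilator is radical (Lemma \ref{LemttAnnRadical}). If $x\otimes\I\otimes\J=0$ and $k\in\I\cap\J$, then $(x\otimes k)^{\otimes 2}\simeq x\otimes x\otimes k\otimes k\in x\otimes\I\otimes\J=0$, so $x\otimes k\simeq 0$.
\item Lemma \ref{LemProductsEqual} now gives
\[
(\I\otimes\J)^*=(\Thick(e_1)\otimes\Thick(e_2))^*=(e_1\otimes e_2)^*.
\]
\item Lemma \ref{LemttIdempotents} identifies $(e_1\otimes e_2)^*$ with $\Thick((e_1)^*,(e_2)^*)=\Thick(\I^*,\J^*)$.
\end{enumerate}

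\emph{Second tt-De Morgan law $\Rightarrow$ Baer.} By Corollary \ref{Corollary2DMLiffED}, $\SpcKinv$ is extremally disconnected. Let $\I^*$ be an annihilator ideal.
\begin{enumerate}
\item By Lemma \ref{LemAnniffOA} and the discussion after it, $\I^*=O_A$ with $A=\supp(\I)$ open.
\item In the frame $\Omega(\SpcKinv)$ we have $\supp(\I^*)=\neg\supp(\I)$, the interior of the complement of $A$. Extremal disconnectedness makes $\overline{A}$ open, so $\supp(\I^*)=X\setminus\overline{A}$ is clopen.
\item For the radical ideal $\I^*$ (Lemma \ref{LemttAnnRadical}), $\I^*$ is then the ideal supported on a clopen set $V$ of $\SpcKinv$. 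It remains to produce an idempotent $e$ with $\supp(e)=V$; then $\Thick(e)=\I^*$ by the classification of radical ideals via supports.
\item Since $V$ is open in $\SpcKinv$ it is Thomason in $\SpcK$. Since $X\setminus V$ is also open in $\SpcKinv$, $V$ is closed in the constructible sense. A constructible set that is open in the inverse topology is a quasi-compact open of $\SpcKinv$, hence $V=\supp(a)$ for a single object $a$, and likewise $X\setminus V=\supp(b)$.
\item Then $\supp(a)\sqcup\supp(b)=\SpcK$ with $\supp(a\otimes b)=\varnothing$. By Balmer's splitting result for rigid tt-categories (disjoint closed cover of the spectrum by supports), $\mathbbm{1}\simeq e\oplus f$ with $\supp(e)=\supp(a)$, which is an idempotent triangle. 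We would cite this; alternatively, we would invoke the Balmer--Favi correspondence between clopen (Thomason, complement Thomason) subsets and idempotent triangles for rigid $\K$ \cite{BF11}.
\end{enumerate}

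The main obstacle is step 4 and the justification of step 5 in the second direction: showing that a clopen subset of $\SpcKinv$ is the support of a single object with quasi-compact complement, and that such a clopen is realised by an idempotent. The cleanest route is to cite \cite{BF11}, where idempotent triangles in rigid categories correspond bijectively to Thomason subsets whose complements are Thomason. If that citation does not fit exactly, we would argue quasi-compactness directly: $\SpcKinv$ is spectral, so it is quasi-compact, and a closed subset of it is quasi-compact. Hence $V$ is a quasi-compact open, i.e. a finite union of $\supp(x_i)$, which equals $\supp(\oplus x_i)$.
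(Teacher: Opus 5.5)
Your argument is correct. The forward direction is essentially the paper's: bound $(\I\cap\J)^*$ by $(\I\otimes\J)^*$, swap in idempotent generators via Lemma \ref{LemProductsEqual}, and finish with Lemma \ref{LemttIdempotents}. You choose a left idempotent generating $\I^{**}$, so that $\I^*=(e_1)^*$. This is slightly tidier than the paper's $\I^*=(f_1)^*$, which needs Lemma \ref{LemttIdempotents} for right idempotents. Your extra inclusion $(\I\otimes\J)^*\subseteq(\I\cap\J)^*$ is harmless but not needed.

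The converse is where you take a different route.
\begin{itemize}
\item The paper stays algebraic. It uses the law only for the pair $\I,\I^*$, obtaining $\Thick(\I^*,\I^{**})=(\I\cap\I^*)^*=0^*=\K$. Hence $\supp(\I^*)$ and $\supp(\I^{**})$ are disjoint and cover the spectrum. Balmer's decomposition theorem then gives $\mathbbm{1}\simeq x\oplus y$ with $x\in\I^*$ and $y\in\I^{**}$, and the paper checks by hand that $x\simeq x\otimes x$ and that $z\simeq z\otimes x$ for all $z\in\I^*$.
\item You go through Corollary \ref{Corollary2DMLiffED}, and hence Johnstone's theorem. You get $\supp(\I^*)=\SpcKinv\setminus\overline{\supp(\I)}$ clopen from extremal disconnectedness, and turn it into $\supp(a)$ with complement $\supp(b)$ by quasi-compactness. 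You then split the unit and identify $\Thick(e)=\I^*$ through the support classification of radical ideals.
\end{itemize}
The paper's route is lighter, and it shows that only the instances $\Thick(\I^*,\I^{**})=\K$ of the law are needed. Yours in effect proves (3)$\Rightarrow$(1) of Corollary \ref{CorollaryBaerttED} directly. It also spells out the compactness step that the paper leaves implicit before invoking Balmer's theorem. You could skip Johnstone entirely, since the clopen partition already follows from the law applied to $\I,\I^*$.

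One caveat applies equally to your step 5 and to the paper's proof: splitting $\mathbbm{1}$ inside $\K$ along a clopen partition of the spectrum requires $\K$ to be idempotent complete. Here is a counterexample without that hypothesis.
\begin{itemize}
\item Take the full subcategory of $\mathsf{D}^{\mathrm{perf}}(k\times k)$ on complexes whose Euler characteristics $(a,b)$ satisfy $a\equiv b \bmod 2$.
\item It is closed under cones, tensor and duals and contains $\mathbbm{1}$, so it is an essentially small rigid tt-category.
\item Its spectrum is the two-point discrete space, and it satisfies the second tt-De Morgan law.
\item Its only idempotents are $0$ and $\mathbbm{1}$, because $(k,0)$ and $(0,k)$ are missing. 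The annihilator ideal of objects supported on the second point is therefore not idempotent generated, so the category is not Baer.
\end{itemize}
So idempotent completeness must be added as a standing hypothesis; the paper tacitly assumes it too.
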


\begin{proof}
    Suppose $\mathcal{K}$ is a Baer tt-category. One inclusion of the second tt-De Morgan law always holds (Proposition \ref{Prop2ttDMLInclusion}), and for the other inclusion, first observe that $(\mathcal{I}\cap\mathcal{J})^*\subseteq(\mathcal{I}\otimes\mathcal{J})^*$ since $\mathcal{I}\otimes\mathcal{J}\subseteq\mathcal{I}\cap\mathcal{J}$ for all tt-ideals $\mathcal{I},\mathcal{J}\subseteq\mathcal{K}$. Now, since $\mathcal{K}$ is a Baer tt-category, $\mathcal{I}^*=(e_1)=(f_1)^*$ and $\mathcal{J}^*=(e_2)=(f_2)^*$, so by Lemma \ref{LemProductsEqual} we have $(\mathcal{I}\otimes\mathcal{J})^*=(f_1\otimes f_2)^*$, which in turn is equal to $\Thick\big((f_1)^*,(f_2)^*\big)=\Thick(\I^*,\J^*)$ by Lemma \ref{LemttIdempotents}.
    
    Conversely, suppose that $\mathcal{K}$ satisfies the second tt-De Morgan law. Since $\mathcal{K}$ has no nilpotent objects, $\mathcal{I}\cap\mathcal{I}^*=0$ for every tt-ideal $\mathcal{I}\subseteq K$. Hence,
    \begin{equation*}
        \Thick(\mathcal{I}^*,\mathcal{I}^{**}) = (\mathcal{I}\cap\mathcal{I}^*)^* = 0^* = \mathcal{K}.
    \end{equation*}
    Again, $\mathcal{I}^*\cap\mathcal{I}^{**}=0$, so $\supp(\I^*)\cap\supp(\I^{**})=\varnothing$ and by the above, $\supp(\I^*)\cup\supp(\I^{**})=\SpcKinv$. Hence, by \cite[Theorem 2.11.]{Bal07}, $\K\simeq\I^*\oplus\I^{**}$. Therefore, $\mathbbm{1}=x\oplus y$ for $x\in\mathcal{I}^*$ and $y\in\mathcal{I}^{**}$. Firstly, $x$ is idempotent since
    \begin{equation*}
        \mathbbm{1}=x\oplus y \Longleftrightarrow x=(x\otimes x)\oplus(x\otimes y) \Longleftrightarrow x=x\otimes x,
    \end{equation*}
    using the fact that $y\in\mathcal{I}^{**}$. Secondly, $x$ generates the tt-ideal $\mathcal{I}^*$ since for $z\in\mathcal{I}^*$, 
    \begin{equation*}
        z = z\otimes\mathbbm{1} = (z\otimes x)\oplus(z\otimes y) = z\otimes x,
    \end{equation*}
    again using that $y\in\mathcal{I}^{**}$. Every tt-annihilator ideal of $\mathcal{K}$ is thus generated by an idempotent object, i.e., $\mathcal{K}$ is a Baer tt-category.
\end{proof}

The following result then follows immediately from Corollary \ref{Corollary2DMLiffED}.

\begin{corollary}\label{CorollaryBaerttED}
    Let $\K$ be an essentially small rigid tt-category. Then the following are equivalent:
    \begin{enumerate}
        \item $\K$ is a Baer tt-category.
        \item $\K$ satisfies the second tt-De Morgan law.
        \item $\SpcKinv$ is extremally disconnected.
    \end{enumerate}
\end{corollary}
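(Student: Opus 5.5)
The corollary follows by chaining two results already proved, so the plan is to check that their hypotheses match and then combine them. The equivalence of (1) and (2) is exactly Theorem \ref{ThmBaer2ttDML}, which is stated for essentially small rigid tt-categories, precisely our standing hypothesis. The equivalence of (2) and (3) is Corollary \ref{Corollary2DMLiffED}, which needs $\K$ to be essentially small with no nilpotent objects. So the only real step is to check that a rigid $\K$ has no nilpotent objects.

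For that step, I would argue as follows. Suppose $x^{\otimes n}\simeq 0$ for some $x \in \K$. Rigidity gives that $x$ is a direct summand of $x\otimes x^\vee\otimes x$, via the triangle identities for the dual $x^\vee$. Iterating this, $x$ is a direct summand of $x^{\otimes n}\otimes (x^\vee)^{\otimes (n-1)}$. The latter object is zero, so $x\simeq 0$. Alternatively, one can use that in a rigid category every tt-ideal is radical (Balmer). The paper already records this fact in the remark opening Section \ref{SectionBaerTT}, so I would cite it there rather than reprove it.

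With that in hand, the chain (1)$\Leftrightarrow$(2)$\Leftrightarrow$(3) closes. Corollary \ref{Corollary2DMLiffED} itself rests on two inputs. First, Lemma \ref{LemttAnnRadical} ensures that the annihilators appearing in the tt-De Morgan laws are radical, so the tt-law is literally the frame-theoretic second De Morgan law in $\ThickK$. Second, Johnstone's Theorem \ref{TheoremJohnstone} applies via the isomorphism $\ThickK\cong\Omega(\SpcKinv)$.

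I expect no genuine obstacle here. The only point needing care is the identification of the pseudocomplement in $\ThickK$ with $\I^*$, and the matching of joins with $\Thick(-,-)$ on radical ideals; both were handled in the derivation preceding Definition \ref{DefinitionttDeMorganlaws}.
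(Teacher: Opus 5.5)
Your proposal is correct and matches the paper: the corollary follows by combining Theorem \ref{ThmBaer2ttDML} for (1)$\Leftrightarrow$(2) with Corollary \ref{Corollary2DMLiffED} for (2)$\Leftrightarrow$(3), using that a rigid $\K$ has no nilpotent objects, which the paper notes at the start of Section \ref{SectionBaerTT}. You go slightly further than the paper by justifying that last point, and your argument is sound: $x$ is a retract of $x^{\otimes n}\otimes(x^\vee)^{\otimes(n-1)}$, so $x^{\otimes n}\simeq 0$ forces $x\simeq 0$.
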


\subsection{Examples}

We will now see some examples of Baer tt-categories and essentially small tt-categories which are not Baer. These examples in particular highlight the usefulness of the previous corollary, that is, it is much easier to check for a certain topological property on the Balmer spectrum of an essentially small tt-category, rather than working directly with the tt-ideals.

\begin{example}\label{ExPLocal}
    Consider the $p$-local finite stable homotopy category, $\SHCp$. The Hochster dual of the Balmer spectrum of $\SHCp$ is as follows, by the classic result of \cite{HS98}:
    \begin{figure}[H]
    \centering
\begin{tikzpicture}[yscale=1]
\draw[draw = black] (-4,0) circle (0.1cm);
\draw[fill = gray!50] (0,0) circle (0.1cm);
\draw[fill = gray!50] (2,0) circle (0.1cm);
\draw[fill = black] (4,0) circle (0.1cm);
\node at (-4.35,-0.5) {$(0)=\P_\infty$};
\node at (-2,0) {$\cdots$};
\node at (0,-0.5) {$\P_3$};
\node at (2,-0.5) {$\P_2$};
\node at (4,-0.5) {$\P_1$};
\draw [->,line join=round,decorate, decoration={zigzag, segment length=6, amplitude=.9,post=lineto, post length=2pt}]  (-3.5,0) -- (-2.5,0);
\draw [->,line join=round,decorate, decoration={zigzag, segment length=6, amplitude=.9,post=lineto, post length=2pt}]  (-1.5,0) -- (-0.5,0);
\draw [->,line join=round,decorate, decoration={zigzag, segment length=6, amplitude=.9,post=lineto, post length=2pt}]  (0.5,0) -- (1.5,0);
\draw [->,line join=round,decorate, decoration={zigzag, segment length=6, amplitude=.9,post=lineto, post length=2pt}]  (2.5,0) -- (3.5,0);
\end{tikzpicture}\caption{The Hochster dual of the Balmer spectrum of the $p$-local finite stable homotopy category, $\SHCp$.}\label{fig:Spc(SHp)inv}
\end{figure}
where $\P_1$ is a closed point and the direction of closure is to the right. Here, $\P_n$ denotes the tt-ideal $\{X\in\SHCp~|~K(n-1)_*(X)\simeq0\}$ for $n\in\mathbb{N}\cup\{\infty\}$, where $K(n)$ represents the $n^{\mathrm{th}}$ Morava $K$-theory. This space is clearly extremally disconnected, since the closure of any open set is the entire space.

Hence, by Corollary \ref{CorollaryBaerttED}, $\SHCp$ is a Baer tt-category. To see this, recall that for two finite $p$-local spectra $X$ and $Y$, $X\wedge Y\simeq 0$ implies that $X\simeq0$ or $Y\simeq0$. Hence, for a tt-ideal $\I\subseteq\SHCp$, if $\I$ contains at least one non-zero spectrum we must have $\I^*\simeq(0)$, and otherwise we must have $\I^*\simeq\SHCp$. There are thus only two tt-annihilator ideals of $\SHCp$, namely $(0)$ and $\SHCp$ itself, which are of course idempotent generated by $0$ and $\mathbbm{1}\simeq S_{(p)}$.

Furthermore, this tells us that $\SHCp$ satisfies the second tt-De Morgan law. To see this, let $\I,\J\subseteq\SHCp$ be tt-ideals. Since every proper tt-ideal of $\SHCp$ is prime, without loss of generality we assume that $\I\subseteq\J$, so $\I\cap\J\simeq\I$ and $\Thick(\I,\J)\simeq\J$. There are two cases to consider:
\begin{enumerate}
    \item If $\I\not\simeq(0)$ then $\I^*\simeq\J^*\simeq (0)$. Hence,
    \begin{equation*}
        (\I\cap\J)^*\simeq\I^*\simeq(0)\simeq\Thick(\I^*,\J^*).
    \end{equation*}
    \item If $\I\simeq(0)$ then $\I^*\simeq\SHCp$. Hence,
    \begin{equation*}
        (\I\cap\J)^*\simeq\I^*\simeq\SHCp\simeq\Thick(\SHCp,\J^*)\simeq\Thick(\I^*,\J^*).
    \end{equation*}
\end{enumerate}
\end{example}

\begin{example}\label{ExampleDR}
    Now consider the ring 
    \begin{equation*}
        R=\frac{k[x_1,x_2,...]_{(x_1,x_2,...)}}{(x_ix_j~|~i\neq j)}
    \end{equation*}
    from \cite[Section 3]{BGS26}. The spectrum of this ring, and hence the Balmer spectrum of $\mathsf{D}^{\mathrm{perf}}(R)$, is isomorphic to $\SpecZ^\vee$ as shown in \cite{BGS26}. We therefore have $\mathrm{Spc(}\mathsf{D}^{\mathrm{perf}}(R))^\vee\cong\SpecZ$, and since $\SpecZ$ is extremally disconnected by Example \ref{ExampleZ}, $\mathsf{D}^{\mathrm{perf}}(R)$ is a Baer tt-category by Corollary \ref{CorollaryBaerttED}.
    
    By Lemma \ref{LemAnniffOA}, every tt-annihilator ideal of $\mathsf{D}^{\mathrm{perf}}(R)$ is of the form $\bigcap_{\mathcal{P}\in A}\mathcal{P}$ for some open subset $A\subseteq\mathrm{Spc}(\mathsf{D}^{\mathrm{perf}}(R))^\vee$. But the minimal prime tt-ideal of $\mathsf{D}^{\mathrm{perf}}(R)$ is the zero ideal, which is dense in $\mathrm{Spc}(\mathsf{D}^{\mathrm{perf}}(R))^\vee$, so the only tt-annihilator ideals of $\mathsf{D}^{\mathrm{perf}}(R)$ are $(0)$ and $\mathsf{D}^{\mathrm{perf}}(R)$ itself. The same reasoning as Example \ref{ExPLocal} can therefore be applied to directly show that $\mathsf{D}^{\mathrm{perf}}(R)$ is a Baer tt-category and that the second tt-De Morgan law holds.
\end{example}

\begin{example}\label{ExDZ}
 Recall Example \ref{ExDZnotDML} where it was shown that $\DZ$ does not satisfy the second tt-De Morgan Law. By the previous results, $\DZ$ is not a Baer tt-category, and $\SpcDZinv$ (Figure \ref{fig:SpcDZinv}) is not extremally disconnected. We can show both of these facts directly.
 
 Firstly, the only idempotent objects of $\DZ$ are 0 and the tensor unit $\mathbb{Z}$. Hence, the tt-annihilator ideal $\P_2$ (which is a tt-annihilator ideal by Lemma \ref{LemAnniffOA}) is clearly not idempotent generated. $\DZ$ is therefore not a Baer tt-category.
 
 Secondly, consider the open set $\{\P_2\}\subseteq\SpcDZinv$. The closure of this set is $\{\P_0,\P_2\}$, which is clearly not an open set. The space $\SpcDZinv$ is thus not extremally disconnected.  
\end{example}

The last two examples that we include here are non-rigid essentially small tt-categories. The first makes use of Corollary \ref{Corollary2DMLiffED} and shows that satisfying the second tt-De Morgan law is still equivalent to $\SpcKinv$ being extremally disconnected in the non-rigid case, while the second example shows why the rigidity assumption is necessary in our definition of Baer tt-category.

\begin{example}
    Consider the subcategory of compact objects of the derived category of global representations for the family of elementary abelian $p$-groups, $\Ep$. By \cite[Theorem 6.13.]{BBPSW25}, $\SpcEp\cong\SpecZ^\vee$, so by taking the Hochster dual we get that $\SpcEpinv\cong\SpecZ$. This space is extremally disconnected, so $\Ep$ satisfies the second tt-De Morgan Law by Corollary \ref{Corollary2DMLiffED}. To see this, note that $\Ep$ does not have any nilpotent objects, so by Lemma \ref{LemAnniffOA} every tt-annihilator ideal is of the form $\bigcap_{\mathcal{P}\in A}\mathcal{P}$ for some open subset $A\subseteq\SpcEpinv$. As in the case of Example \ref{ExampleDR}, the zero ideal is dense in $\SpcEpinv$, so again the only tt-annihilator ideals of $\Ep$ are $(0)$ and $\Ep$ itself. $\Ep$ thus trivially satisfies the second tt-De Morgan Law.
\end{example}

\begin{example}
    Let $\C$ be the category $1\rightarrow2\rightarrow3$ and let $\K$ be the (non-rigid) essentially small tt-category of perfect complexes of $\C$-modules, $\mathsf{D}^\mathrm{perf}(\C)$. Here, the tensor product is given by
    \begin{align*}
        \mathrm{mod}(\C)\otimes\mathrm{mod}(\C)&\rightarrow\mathrm{mod}(\C),\\
        (F,G)&\mapsto F\otimes G\colon\C^\mathrm{op}\times\C^\mathrm{op}\rightarrow\mathrm{mod}(k)\times\mathrm{mod}(k)\xrightarrow{\otimes}\mathrm{mod}(k)
    \end{align*}
    for some field $k$. We set $P_i:=k\C(-,i)$ and get the following projective objects:
    \begin{align*}
        &P_1 = k \leftarrow 0 \leftarrow 0, \\
        &P_2 = k \leftarrow k \leftarrow 0, \\
        &P_3 = k \leftarrow k \leftarrow k.
    \end{align*}
    Note that $P_3$ is the tensor unit $\mathbbm{1}$ of $\K$. We have a short exact sequence
    \begin{equation*}
        P_1\longrightarrow P_3\longrightarrow I_2 = \sfrac{P_3}{P_1} = (0 \leftarrow k \leftarrow k)
    \end{equation*}
    which gives rise to the following system:
    \begin{center}
\begin{tikzcd}
S_2 & = & 0 & k \arrow[l] \arrow[d, hook]      & 0 \arrow[l] \arrow[d, hook]      \\
I_2 & = & 0 & k \arrow[l] \arrow[d, two heads] & k \arrow[l] \arrow[d, two heads] \\
S_3 & = & 0 & 0 \arrow[l]                      & k \arrow[l]                     
\end{tikzcd}
    \end{center}
    where $S_2$ and $S_3$ are simple objects of $\K$. We thus have a short exact sequence
    \begin{equation*}
        S_2 \longrightarrow I_2 \longrightarrow S_3
    \end{equation*}
    where $S_2$ and $S_3$ are idempotent objects, but $I_2\neq\mathbbm{1}$, so this is not an idempotent triangle in the sense of Definition \ref{DefinitionIdempotent}. 
\end{example}

At the beginning of Section \ref{SectionBaerTT}, we made the assumption that all idempotent objects exist as left or right idempotents in idempotent triangles. This is true for rigid tt-categories by \cite[Proposition 3.1.]{BF11}, however this example demonstrates that this assumption is not valid for non-rigid tt-categories, thus showing the necessity of the rigidity condition in our definition of Baer tt-category.

\section{Baer Reflection Functors}\label{SectionReflection}

Thus far, we have seen the connections between the topological property of being extremally disconnected, the frame-theoretic property of satisfying the second De Morgan law, and the algebraic/tt-geometric property of annihilator ideals being idempotent generated. This section is dedicated to constructing commutative rings and tt-categories which have these properties. However, we begin topologically.

Given a topological space $X$, Section 4 of \cite{Rum09} gives a construction to produce an extremally disconnected space from $X$, known as the \textbf{absolute} of $X$ and denoted by $\breve{X}$, along with a continuous comparison map $a:\breve{X}\rightarrow X$. This construction therefore gives us a method for producing topological spaces whose frame of open sets satisfy the second De Morgan Law. This is done by forcing every regular open set to be clopen (see discussion in Section \ref{SectionBaerRings}).

\begin{construction}\label{ConstructionRump}
    (\cite{Rum09}) Let $X$ be a topological space and let $\Phi(X)$ denote the family of finite collections of regular open subsets of $X$ such that for each $\mathcal{A}\in\Phi(X)$, the regular open subsets of $\mathcal{A}$ are pairwise disjoint and the union of all subsets of $\mathcal{A}$ is dense in $X$. We write that $\mathcal{A}\leq\mathcal{A}'$ when $\mathcal{A}$ is finer than $\mathcal{A}'$, that is, every set $U\in\mathcal{A}$ is contained in some set $V\in\mathcal{A}'$. Now define $\overline{\mathcal{A}}:=\coprod\{\overline{U}~|~U\in \mathcal{A}\}$. Then
\begin{equation*}
    \breve{X} := \lim_{\substack{\longleftarrow\\\mathcal{A}\in\Phi(X)}}\overline{\mathcal{A}}
\end{equation*}
is the absolute of $X$.

Observe that for two partitions $\A\leq\A'$, we have a map
    \begin{equation*}
    \coprod\limits_{U\in\A}\overline{U}\longrightarrow\coprod\limits_{V\in\A'}\overline{V},
    \end{equation*}
    since for every $U_i\in\A$ we have the inclusions
    \begin{equation*}
        \overline{U_i}\xhookrightarrow{}\overline{V_j}\xhookrightarrow{}\coprod\limits_{V\in\A'}\overline{V}
    \end{equation*}
    for some regular open set $V_j\in\A'$. This gives rise to a continuous map into the space $X$ from its absolute, $a\colon\breve{X}\rightarrow X$, which will be of use when computing examples.
\end{construction}

\begin{theorem}
    (\cite[Theorem 1]{Rum09}) Let $X$ be a topological space. Then the absolute of $X$ is extremally disconnected.
\end{theorem}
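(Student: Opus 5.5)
The plan is to verify extremal disconnectedness of $\breve{X}$ directly: we take an arbitrary open set $O\subseteq\breve{X}$ and exhibit its closure as a union of coproduct summands at a single stage of the limit. Write $\pi_{\mathcal{A}}\colon\breve{X}\to\overline{\mathcal{A}}$ for the limit projections.

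The first observation is that for each $\mathcal{A}\in\Phi(X)$ and each $U\in\mathcal{A}$, the summand $\overline{U}$ is clopen in the coproduct $\overline{\mathcal{A}}$. Hence $\pi_{\mathcal{A}}^{-1}(\overline{U})$ is clopen in $\breve{X}$. So it suffices to show that $\overline{O}=\pi_{\mathcal{A}}^{-1}(\overline{U})$ for a suitable $\mathcal{A}$ and $U\in\mathcal{A}$.

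Next we need a basis for $\breve{X}$. Since $\Phi(X)$ is directed (two partitions $\mathcal{A},\mathcal{A}'$ have the common refinement $\{(U\cap V)~|~U\in\mathcal{A},V\in\mathcal{A}'\}$, discarding empty sets), the sets $\pi_{\mathcal{A}}^{-1}(W)$ form a basis. Here $W$ ranges over open subsets of a single summand $\overline{U}$ with $U\in\mathcal{A}$. Because $U$ is dense in $\overline{U}$, we may shrink each such basic set to one where $W\cap U\neq\varnothing$. Refining $\mathcal{A}$ by the regular open sets $\overline{W\cap U}^\circ$ and $U\setminus\overline{W\cap U}$, we may in fact assume $W\cap U$ is itself a member of a partition.

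To the open set $O$ we then attach the regular open set $R:=\overline{\bigcup\{V\}}^{\circ}\subseteq X$. The union runs over all regular open $V$ for which some partition $\mathcal{A}\ni V$ satisfies $\pi_{\mathcal{A}}^{-1}(\overline{V})\subseteq O$. Put $R':=(X\setminus\overline{R})$, which is the pseudocomplement of $R$. Then $\mathcal{A}_O:=\{R,R'\}$ lies in $\Phi(X)$, since $R$ and $R'$ are disjoint regular open sets with dense union. The goal is to prove $\overline{O}=\pi_{\mathcal{A}_O}^{-1}(\overline{R})$, which is clopen by the first step.

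The inclusion $\overline{O}\subseteq\pi_{\mathcal{A}_O}^{-1}(\overline{R})$ reduces to showing that $O$ misses the clopen set $\pi_{\mathcal{A}_O}^{-1}(\overline{R'})$. Suppose not. Then by the basis description some basic set $\pi_{\mathcal{A}}^{-1}(\overline{V})$ with $V\subseteq R'$ would lie in $O$. By definition of $R$ this gives $V\subseteq R$, so $V\subseteq R\cap R'=\varnothing$, which is a contradiction.

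For the reverse inclusion, we take a point $\xi\in\pi_{\mathcal{A}_O}^{-1}(\overline{R})$ and a basic neighbourhood $\pi_{\mathcal{A}}^{-1}(\overline{V})$ of it, with $V\subseteq R$ after refining. We must show this neighbourhood meets $O$. Since $V$ meets the dense part of $R$, it meets some generating $V'$ with $\pi^{-1}(\overline{V'})\subseteq O$. Refining to include $V\cap V'$, we get a nonempty basic set inside both, provided it is nonempty in $\breve{X}$.

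I expect the main obstacle to be this last point: showing that $\pi_{\mathcal{A}}^{-1}(\overline{U})\neq\varnothing$ whenever $U\neq\varnothing$. This is essentially surjectivity of the projections of the inverse limit. I would handle it by a compactness or ultrafilter argument. Namely, choose a maximal filter of regular open sets containing $U$. It selects exactly one member of each partition (exactly one because partitions are finite with dense union). Then use the points of $X$, or the closures $\overline{U}$ themselves, to produce a compatible thread, as in Rump's construction.
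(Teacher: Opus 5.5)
\textbf{Verdict: genuine gap.} The paper gives no proof here; it quotes \cite[Theorem 1]{Rum09}. Your argument therefore has to stand on its own, and it breaks at the basis step.

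\textbf{Where it fails.} Refining $\mathcal{A}$ so that $V=\overline{W\cap U}^{\circ}$ becomes a member does \emph{not} turn $\pi_{\mathcal{A}}^{-1}(W)$ into the whole-summand set $\pi_{\mathcal{A}'}^{-1}(\overline{V})$. It becomes $\pi_{\mathcal{A}'}^{-1}(W\cap\overline{V})$, which still forces the underlying point of $X$ to lie in $W$, and is in general a proper subset. The whole-summand sets are clopen, so they cannot form a basis in general.
\begin{itemize}
\item In the paper's three-point example, $\breve{X}$ is two copies of the Sierpi\'nski space. The open point $1$ of the first copy is not a union of clopen subsets of $\breve{X}$.
\item Your construction fails concretely there. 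Take $O=\pi_{\mathcal{A}}^{-1}(\{1\})$ with $\mathcal{A}=\{\{1\},\{2\}\}$. No nonempty set $\pi^{-1}(\overline{V})$ lies in $O$, so your $R=\varnothing$. But $\overline{O}=\{0_1,1\}$.
\item The false step is ``some basic set $\pi_{\mathcal{A}}^{-1}(\overline{V})$ with $V\subseteq R'$ would lie in $O$.'' Here $O$ meets $\pi^{-1}(\overline{R'})=\breve{X}$ yet contains no such set.
\end{itemize}

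\textbf{How to repair it.} Build $R$ from the traces of \emph{all} basic sets inside $O$. Let $S$ be the union of the sets $G\cap U$, over all $U\in\mathcal{A}\in\Phi(X)$ and open $G\subseteq X$ with $\pi_{\mathcal{A}}^{-1}(G\cap\overline{U})\subseteq O$. Put $R=\overline{S}^{\circ}$ and $R'=X\setminus\overline{S}$.
\begin{itemize}
\item \emph{First inclusion.} Suppose a thread $\xi\in O$ lay in $\pi_{\mathcal{A}_O}^{-1}(\overline{R'})$. Choose such a basic set $\pi_{\mathcal{A}}^{-1}(G\cap\overline{U})\subseteq O$ containing $\xi$. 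In a common refinement of $\mathcal{A}$ and $\mathcal{A}_O$, the member $U''$ chosen by $\xi$ satisfies $U''\subseteq U\cap R'$. The underlying point $x$ lies in $G\cap\overline{U''}$, so $G\cap U''\neq\varnothing$. This contradicts $G\cap U''\subseteq G\cap U\subseteq S\subseteq\overline{S}$ together with $U''\subseteq X\setminus\overline{S}$.
\item \emph{Reverse inclusion.} This goes as you sketch, but with general basic neighbourhoods $\pi_{\mathcal{A}}^{-1}(G\cap\overline{U})$. The nonempty open set $G\cap U''$, where $U''\subseteq U\cap R$, meets some trace $G'\cap U'\subseteq S$. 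Then $\pi^{-1}\bigl((G\cap G')\cap\overline{U''\cap U'}\bigr)$ lies in both the neighbourhood and $O$.
\item \emph{Nonemptiness lemma.} This must be anchored at a point. For $y\in G\cap U$, take an ultrafilter of regular open sets containing $U$ and every $\overline{N}^{\circ}$, with $N$ an open neighbourhood of $y$. These sets all contain $y$, so they have the finite intersection property. Every member then meets every neighbourhood of $y$. Picking from each partition its unique member in the ultrafilter therefore gives a thread over $y$ inside $\pi_{\mathcal{A}}^{-1}(G\cap\overline{U})$.
\item An unanchored maximal filter containing $U$ does not suffice. It can have empty adherence (a free ultrafilter on discrete $\mathbb{N}$), and then it yields no thread at all.
\end{itemize}
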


\begin{example}
    Recall the 3 point topological space $X$ of Example \ref{Example3set}. The only non-trivial regular open subsets of $X$ are $\{1\}$ and $\{2\}$, and the union of these two subsets is dense in $X$. Hence, $\Phi(X)=\Big\{\big\{\{1\},\{2\}\big\},X\Big\}$, and since $\big\{\{1\},\{2\}\big\}$ is finer than $X$, we get
    \begin{equation*}
        \breve{X} = \lim_{\substack{\longleftarrow\\\mathcal{A}\in\Phi(X)}}\overline{\mathcal{A}} = \overline{\big\{\{1\},\{2\}\big\}} = \overline{\{1\}}\coprod\overline{\{2\}} = \{0,1\}\coprod\{0,2\}.
    \end{equation*}
    Hence, $\breve{X}$ is homeomorphic to the disjoint union of two copies of the Sierpiński space, which is represented in the following diagram. This space can be easily verified to be extremally disconnected.

    \begin{figure}[H]
    \centering
\begin{tikzpicture}[yscale=-1]
\draw[white, fill=white] (-1,0) circle (.3cm);
\node[circle] at (-1,0) (e) {};
\draw[white, fill=white] (1,0) circle (.3cm);
\node[circle] at (1,0) (f) {};
\draw[fill = black] (-1,0) circle (0.1cm);
\draw[fill = black] (1,0) circle (0.1cm);
\draw[draw = black] (-1,-1.5) circle (0.1cm);
\draw[draw = black] (1,-1.5) circle (0.1cm);
\node at (-1,.5) {$0_1$};
\node at (1,.5) {$0_2$};
\node at (-1,-2.0) {$1$};
\node at (1,-2.0) {$2$};
\draw [->,line join=round,decorate, decoration={zigzag, segment length=6, amplitude=.9,post=lineto, post length=2pt}]  (-1,-1.25) -- (e);
\draw [->,line join=round,decorate, decoration={zigzag, segment length=6, amplitude=.9,post=lineto, post length=2pt}]  (1,-1.25)  -- (f);
\end{tikzpicture}\caption{The absolute of the three point set with the excluded point topology, which is given by the disjoint union of two copies of the Sierpiński space, $\mathbbm{2}\coprod\mathbbm{2}$.}\label{fig:2xSierpiński}
\end{figure}
Here, the comparison map $a:\breve{X}\rightarrow X$ will map the points $0_1$ and $0_2$ in $\breve{X}$ to the point $0$ in $X$.
\end{example}

\begin{example}\label{ExAbsSpecZinv}
    In Example \ref{ExDZ} it was shown that $X=\SpecZ^\vee$ (Figure \ref{fig:SpcDZinv}) is not extremally disconnected. We can therefore compute its absolute, $\breve{X}$. As mentioned above, we have a map $a\colon\breve{X}\rightarrow X$, so computing $\breve{X}$ reduces to computing $a^{-1}(\P_0)$ and $a^{-1}(\P_p)$ for $p$ non-zero and prime.

    First looking at $a^{-1}(\P_0)$, we have the following pull-back square:
    \begin{center}
\begin{tikzcd}
a^{-1}(\P_0) \arrow[rr] \arrow[dd] &  & \displaystyle{\breve{X}=\lim_{\substack{\longleftarrow\\\mathcal{A}\in\Phi(X)}}\overline{\mathcal{A}}} \arrow[dd] \\
                                   &  &                                                                                                    \\
\P_0 \arrow[rr, hook]              &  & X.                                                                                                 
\end{tikzcd}
    \end{center}
    We therefore have that
    \begin{equation*}
        a^{-1}(\P_0) = \Big(\lim_{\substack{\longleftarrow\\\mathcal{A}\in\Phi(X)}}\overline{\mathcal{A}} \Big) \cap \{\P_0\} = \lim_{\substack{\longleftarrow\\\mathcal{A}\in\Phi(X)}}*~.
    \end{equation*}
    As mentioned in Example \ref{ExDZnotDML}, an open set of $X$ is given by any collection of minimal primes. All such collections are in fact regular open sets except for the set of all minimal primes. Hence, by relabelling the minimal primes as $\P_1,\P_2,\P_3,...$, a partition $\mathcal{A}\in\Phi(X)$ is equivalent to a partition of the natural numbers. Therefore, taking the limit of a point over finer partitions of the natural numbers, we have that $a^{-1}(\P_0)\cong\beta\mathbb{N}$, the Stone-\v Cech compactification of the natural numbers.

    We now move on to $a^{-1}(\P_p)$. Given a partition $\A'\in\Phi(X)$, the regular open sets $V\in\A'$ are pairwise disjoint, so there is a unique set $V_j$ such that $\P_p\in V_j$. Then for a finer partition $\A\leq\A'$, there is again a unique regular open set $U_i\in\A$ such that $\P_p\in U_i\subseteq V_j$. Therefore, since $\P_p$ only appears once in each partition, we must necessarily have that $a^{-1}(\P_p)=\{\P_p\}$.

    It remains to clarify the topology of $\breve{X}$. We again relabel the minimal primes as $\P_1,\P_2,\P_3,\ldots$. Then the closure of a minimal prime $\P_i$ is given by
\begin{equation*}
    \overline{\{\P_i\}}^{\breve{X}} = \bigcap_{\substack{U\text{ regular open}\\\text{s.t. }\P_i\in\overline{U}}}\overline{U}^{X} = \{\P_i,i\}
\end{equation*}
where $i\in\mathbb{N}\subseteq\beta\mathbb{N}$ is an open point. This is because $\{\P_i,\bigcup_{j\neq i}\P_j\}$ is a valid partition of regular open subsets of $X$, and the closure of $\{\P_i\}$ in $X$ is $\{\P_i,\P_0\}$. All together, $\breve{X}$ can be represented by the following diagram:

    \begin{figure}[H]
    \centering
\begin{tikzpicture}[yscale=-1]
\draw[pattern=north east lines, pattern color=gray!75] (-5,-.75) rectangle (5,.75);
\draw[draw = black] (-4,-1.75) circle (0.1cm);
\draw[draw = black] (-2,-2) circle (0.1cm);
\draw[draw = black] (0,-2.25) circle (0.1cm);
\draw[draw = black] (2,-2) circle (0.1cm);
\fill[black] (-4,0) -- ++(0:0.1cm) arc (0:180:0.1cm) -- cycle;
\draw (-4,0) circle (0.1cm);
\fill[black] (-2,0) -- ++(0:0.1cm) arc (0:180:0.1cm) -- cycle;
\draw (-2,0) circle (0.1cm);
\fill[black] (0,0) -- ++(0:0.1cm) arc (0:180:0.1cm) -- cycle;
\draw (0,0) circle (0.1cm);
\fill[black] (2,0) -- ++(0:0.1cm) arc (0:180:0.1cm) -- cycle;
\draw (2,0) circle (0.1cm);
\node at (-4,-2.25) {$\mathcal{P}_1$};
\node at (-2,-2.5) {$\mathcal{P}_2$};
\node at (0,-2.75) {$\mathcal{P}_3$};
\node at (2,-2.5) {$\mathcal{P}_4$};
\node at (4,-1.75) {$\cdots$};
\node at (-4,.4) {$1$};
\node at (-2,.4) {$2$};
\node at (0,.4) {$3$};
\node at (2,.4) {$4$};
\node at (4,0) {$\cdots$};
\node at (5.5,0) {$\beta\mathbb{N}$};
\draw [->,line join=round,decorate, decoration={zigzag, segment length=6, amplitude=.9,post=lineto, post length=2pt}]  (-4,-1.5)  -- (-4,-.25);
\draw [->,line join=round,decorate, decoration={zigzag, segment length=6, amplitude=.9,post=lineto, post length=2pt}]  (4,-1.5) -- (4,-0.25);
\draw [->,line join=round,decorate, decoration={zigzag, segment length=6, amplitude=.9,post=lineto, post length=2pt}]  (-2,-1.75) -- (-2,-0.25);
\draw [->,line join=round,decorate, decoration={zigzag, segment length=6, amplitude=.9,post=lineto, post length=2pt}]  (2,-1.75)  -- (2,-.25);
\draw [->,line join=round,decorate, decoration={zigzag, segment length=6, amplitude=.9,post=lineto, post length=2pt}]  (0.0,-2)  -- (0,-.25);
\end{tikzpicture}\caption{The absolute of the Hochster dual of the Zariski spectrum of $\mathbb{Z}$, where $\beta\mathbb{N}$ represents the Stone-\v Cech compactification of the natural numbers. Here, the half filled circles represent the copy of $\mathbb{N}$ which is contained within $\beta\mathbb{N}$, which in this case are clopen points.}\label{fig:AbsSpecZinv}
\end{figure}

Applying the map $a$ to $\breve{X}$ preserves the minimal primes and collapses $\beta\mathbb{N}$ into the closed point $\P_0$, thus recovering the original space $X$ (Figure \ref{fig:SpcDZinv}). The open points of $\beta\mathbb{N}$ are clopen in $\breve{X}$, so to confirm that $\breve{X}$ is extremally disconnected we simply need to check that the closure of a minimal prime $\{\P_i\}$ is open. From above, $\overline{\{\P_i\}} = \{\P_i,i\}$, which is a union of two open points and is therefore open. All together, the closure of any open set is open, thus showing that $\breve{X}$ is extremally disconnected.
\end{example}

We now return to commutative Baer rings, which are studied in the paper \cite{Spe73}. Given a commutative reduced ring $R$, the author gives a construction to produce a universal commutative Baer ring containing $R$ (see Theorem \ref{TheoremBaerification} for the precise statement). We give Speed's ``Baerification'' construction below, where $A(R)$ denotes the complete Boolean algebra of annihilator ideals of a given commutative reduced ring $R$.

\begin{construction}
    (\cite{Spe73}) Let $R$ be a commutative reduced ring and define a partition of $A(R)$ to be a family $\mathcal{D}$ of elements of $A(R)$ such that for $D,E\in\mathcal{D}$ distinct we have $D\cap E=(0)$, and $(\Sigma_{D\in\mathcal{D}}D)^{**}=R$. For two partitions $\mathcal{C}$ and $\mathcal{D}$ we write $\mathcal{C}\leq\mathcal{D}$ when $\mathcal{D}$ is finer than $\mathcal{C}$, and denote by $\pi(R)$ the directed set of all finite partitions of $A(R)$.

We define a family of commutative rings $\{R_{\mathcal{D}}~|~\mathcal{D}\in\pi(R)\}$ and ring morphisms $f_{\mathcal{C},\mathcal{D}}\colon R_{\mathcal{C}}\rightarrow R_{\mathcal{D}}$ when $\mathcal{C}\leq\mathcal{D}$ as follows:
\begin{enumerate}
    \item $R_\mathcal{D}=\prod\limits_{D\in\mathcal{D}}\sfrac{R}{D^*}$ for each $\mathcal{D}\in\pi(R)$.
    \item For $\mathcal{C}\leq\mathcal{D}$ in $\pi(R)$, since $\mathcal{D}$ is a finer partition than $\mathcal{C}$ we can write $C=(\Sigma_\delta D_\delta)^{**}$ for all $C\in\mathcal{C}$. Then $C^*=\cap_\delta D_\delta^*$ by the first algebraic De Morgan law, which gives an isomorphism between $R/C^*$ and $\prod_\delta R/D_\delta^*$. Ranging over every $C\in\mathcal{C}$, we obtain a morphism
    \begin{equation*}
    f_{\mathcal{C},\mathcal{D}}\colon\prod\limits_{C\in\mathcal{C}}\sfrac{R}{C^*}\longrightarrow\prod\limits_{D\in\mathcal{D}}\sfrac{R}{D^*}.
    \end{equation*}
\end{enumerate}
The \textbf{Baerification} of the ring $R$ is then given by
\begin{equation*}
    B(R) := \colim_{\substack{\longrightarrow \\ \mathcal
    {D}\in\pi(R)}}\prod\limits_{D\in\mathcal{D}}\sfrac{R}{D^*},
\end{equation*}
and we let $\beta\colon R\rightarrow B(R)$ be the embedding of $R$ as a subring into its Baerification $B(R)$.
\end{construction}

\begin{definition}
    (\cite[Definition 1.2.]{Spe73}) Let $R,S$ be commutative rings. A ring morphism $\phi\colon R\rightarrow S$ is $\mathbf{\rho}$\textbf{-compatible} if for all subsets $A,B\subseteq R$ such that $A^*=B^*$, we have that $\phi(A)^*=\phi(B)^*$. In the case that $R$ and $S$ are Baer rings, we call $\phi$ a \textbf{Baer morphism}.
\end{definition}

\begin{theorem}\label{TheoremBaerification}
   (\cite[Theorem 2.5.]{Spe73}) Let $R$ be a commutative reduced ring. Then there is a commutative Baer ring $B(R)$ and a $\rho$-compatible ring monomorphism $\beta\colon R\rightarrow B(R)$ which are universal in the category of commutative Baer rings, that is, the functor $B\colon\mathsf{CRed}\rightarrow \mathsf{CBaer}$ is left adjoint to the forgetful functor $U\colon\mathsf{CBaer}\rightarrow \mathsf{CRed}$.
\end{theorem}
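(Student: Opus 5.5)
The plan is to prove the universal property directly from the colimit description of $B(R)$, in four steps: $B(R)$ is reduced, $\beta$ is an injective $\rho$-compatible map, $B(R)$ is Baer, and any $\rho$-compatible map into a Baer ring factors uniquely through $\beta$. I expect the Baer property of $B(R)$ to be the main obstacle.

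\textbf{Reducedness of $B(R)$ and injectivity of $\beta$.} In a reduced ring every annihilator ideal $D^*$ is radical, since $x^n D=0$ forces $(xd)^n=0$ and hence $xd=0$. So each $R/D^*$ is reduced, each finite product $R_{\mathcal{D}}$ is reduced, and the filtered colimit $B(R)$ is reduced. The trivial partition $\{R\}$ gives $R_{\{R\}}=R/R^*=R$, so $\beta$ is the colimit leg out of $R_{\{R\}}$. The transition maps are injective: by the first algebraic De Morgan law, $\bigcap_\delta D_\delta^*=C^*$, so each $f_{\mathcal{C},\mathcal{D}}$ is the Chinese-remainder-type embedding $R/C^*\hookrightarrow\prod_\delta R/D_\delta^*$. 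A filtered colimit of injections is injective on each leg, so $\beta$ is a monomorphism.

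\textbf{Annihilators in $B(R)$ and the Baer property.} I would compare annihilators in $B(R)$ with those in $R$. Each element of $B(R)$ lives in some $R_{\mathcal{D}}$, so it can be written as $\sum_{D\in\mathcal{D}}\beta(r_D)\varepsilon_D$, where $\varepsilon_D$ is the idempotent of the factor $R/D^*$. From this normal form I would show two things. First, $\beta$ is $\rho$-compatible, i.e.\ the annihilator of $\beta(A)$ in $B(R)$ depends only on $A^*$. Second, $A\mapsto\beta(A)^*$ induces an isomorphism of Boolean algebras $A(R)\cong A(B(R))$. For the Baer property, take an annihilator of $B(R)$, which corresponds to some $D\in A(R)$. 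The partition $\{D,D^*\}$ produces an idempotent $\varepsilon_D\in R_{\{D,D^*\}}\to B(R)$. The claim to verify is that $\varepsilon_D$ generates the annihilator corresponding to $D$. This comes down to computing annihilators componentwise in a finite product $\prod R/E^*$ and checking that this computation is stable under refinement of partitions. That bookkeeping is the step I expect to be the main obstacle.

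\textbf{Universal property.} Let $\phi\colon R\to S$ be $\rho$-compatible with $S$ Baer.
\begin{itemize}
\item For $D\in A(R)$, let $e_D$ be the idempotent generating $\phi(D)^{**}$, so that $\phi(D)^*=(1-e_D)S$. By $\rho$-compatibility, $e_D$ depends only on the annihilator class of $D$.
\item For a partition $\mathcal{D}$, distinct $D,E$ satisfy $DE\subseteq D\cap E=0$, so the $e_D$ are pairwise orthogonal.
\item Since $(\sum_{D\in\mathcal{D}} D)^{*}=R^*=0$, $\rho$-compatibility gives $\phi(\sum D)^*=\phi(R)^*=0$. It follows that $\sum_{D\in\mathcal{D}} e_D=1$, and hence $S\cong\prod_{D\in\mathcal{D}} Se_D$.
\item Since $\phi(D^*)\phi(D)=0$, we have $\phi(D^*)\subseteq(1-e_D)S$. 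Therefore $r\mapsto\phi(r)e_D$ factors through $R/D^*\to Se_D$.
\end{itemize}
Assembling these maps gives $\phi_{\mathcal{D}}\colon R_{\mathcal{D}}\to S$. To check compatibility with the $f_{\mathcal{C},\mathcal{D}}$, it suffices to show $e_C=\sum_\delta e_{D_\delta}$ when $C=(\sum_\delta D_\delta)^{**}$. This again follows from $\rho$-compatibility together with the first De Morgan law. The $\phi_{\mathcal{D}}$ then induce $\overline{\phi}\colon B(R)\to S$ with $\overline{\phi}\circ\beta=\phi$.

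For uniqueness, any Baer morphism extending $\phi$ must send $\varepsilon_D$ to $e_D$. This is because $\varepsilon_D$ generates $\beta(D)^{**}$ and Baer morphisms preserve annihilators. Since every element has the normal form $\sum_D\beta(r_D)\varepsilon_D$, the extension is determined. Functoriality of $B$ and the adjunction $B\dashv U$ then follow formally from this universal property.
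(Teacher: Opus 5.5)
The paper does not prove this statement; it quotes it from Speed. Your outline follows the same skeleton the paper uses for its tt-analogue:
\begin{itemize}
\item your normal form corresponds to Lemma~\ref{LemStandard};
\item your claim that $\beta(A)^*$ is generated by $\varepsilon_{A^*}$ corresponds to Lemma~\ref{LemIdempotentGenerated};
\item defining the extension on the $\beta(r)$ and on the idempotents corresponds to the proof of Theorem~\ref{ThmttUniversal}.
\end{itemize}
Most of your individual steps are sound. You are also right that $f_{\mathcal{C},\mathcal{D}}$ is only an embedding $R/C^*\hookrightarrow\prod_\delta R/D_\delta^*$, not the isomorphism the paper's construction asserts.

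\textbf{The gap.} In the existence half of the universal property, you never check that $\overline{\phi}\colon B(R)\to S$ is a Baer morphism, i.e.\ that it is $\rho$-compatible. The paper's tt-version at least records this step. Morphisms of $\mathsf{CBaer}$ are Baer morphisms, and a ring map between Baer rings need not be one. For example, reduction $\mathbb{Z}\to\mathbb{F}_2$ has $\{2\}^*=\{1\}^*=0$ in $\mathbb{Z}$, while $\{0\}^*\neq\{1\}^*$ in $\mathbb{F}_2$. Without this check, $\phi\mapsto\overline{\phi}$ does not land in $\mathrm{Hom}_{\mathsf{CBaer}}(B(R),S)$, so the adjunction does not follow formally.

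\textbf{How to fill it.} The missing step can be supplied from your normal form. For $x=\sum_{D\in\mathcal{D}}\beta(r_D)\varepsilon_D$, put $A_x=\bigcup_{D}r_DD\subseteq R$. You have $(\varepsilon_D)^*=\beta(D)^*$ in $B(R)$ and $(e_D)^*=\phi(D)^*$ in $S$, and both families of idempotents are orthogonal with sum $1$. Hence
\[
x^*=\bigcap_{D}\bigl(\beta(r_D)\varepsilon_D\bigr)^*=\beta(A_x)^*,\qquad \overline{\phi}(x)^*=\bigcap_{D}\bigl(\phi(r_D)e_D\bigr)^*=\phi(A_x)^*.
\]
Therefore $X^*=\beta(A_X)^*$ and $\overline{\phi}(X)^*=\phi(A_X)^*$, with $A_X=\bigcup_{x\in X}A_x$.

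Now suppose $X^*=Y^*$. Then $(\varepsilon_{A_X^*})=(\varepsilon_{A_Y^*})$, so $A_X^*=A_Y^*$. This uses injectivity of $D\mapsto\varepsilon_D$, which follows from injectivity of the transition maps and $R/E^*\neq0$ for $E\neq0$. Then $\rho$-compatibility of $\phi$ gives $\overline{\phi}(X)^*=\overline{\phi}(Y)^*$.

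\textbf{The Baer step.} The identity $X^*=\beta(A_X)^*$ is also what your Baer step really needs. The elements of an infinite $X\subseteq B(R)$ need not live at a common stage of the colimit, so refinement bookkeeping alone does not reach arbitrary annihilators. The reduction to a single subset $A_X\subseteq R$ does reach them. It leaves only one computation: $\beta(A)^*=(\varepsilon_{A^*})$ on the partition $\{A^*,A^{**}\}$. There, reducedness gives $y_EE\subseteq A^*\cap A^{**}=0$ on the components $E\subseteq A^{**}$.
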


\begin{example}
    Again consider the commutative reduced ring $R=\mathbb{Z}[x]/(x^2-1)$, which was previously shown to not be a Baer ring. The only non-trivial annihilator ideals of $R$ are $(x+1)=(x-1)^*$ and $(x-1)=(x+1)^*$. Now, following the Baerification construction, we can only produce a one element partition, $\mathcal{C}=\{R\}$, and a two element partition, $\mathcal{D}=\{(x+1),(x-1)\}$. We thus define the rings
    \begin{align*}
        R_{\mathcal{C}} &:= \sfrac{R}{R^*} = \sfrac{R}{(0)} = R,\\
        \text{and } ~~~ R_{\mathcal{D}} &:= \sfrac{R}{(x+1)^*} \times \sfrac{R}{(x-1)^*} = \sfrac{R}{(x-1)} \times \sfrac{R}{(x+1)} \cong \sfrac{\mathbb{Z}[x]}{(x+1)} \times \sfrac{\mathbb{Z}[x]}{(x-1)},
    \end{align*}
    and the inclusion
    \begin{equation*}
        f_{\mathcal{C},\mathcal{D}}\colon\sfrac{\mathbb{Z}[x]}{(x^2-1)}\longrightarrow \sfrac{\mathbb{Z}[x]}{(x+1)} \times \sfrac{\mathbb{Z}[x]}{(x-1)}.
    \end{equation*}
    Taking the colimit of this family of commutative rings, we get $\mathbb{Z}[x]/(x+1) \times \mathbb{Z}[x]/(x-1)$, which is in fact isomorphic to $\mathbb{Z}\times\mathbb{Z}$. Hence, the Baerification of the ring $R=\mathbb{Z}[x]/(x^2-1)$ is $\mathbb{Z}\times\mathbb{Z}$, which we can confirm to be a Baer ring using Example \ref{ExampleZ} and the fact that the product of Baer rings is itself a Baer ring. One can confirm that $\mathrm{Spec}(\mathbb{Z}\times\mathbb{Z})\cong\SpecZ\coprod\SpecZ$ (below) is the absolute of $\mathrm{Spec}\big(\mathbb{Z}[x]/(x^2-1)\big)$ (Figure \ref{fig:2xSpecZglued}) in the sense of Rump's construction.
    
\begin{figure}[H]
    \centering
\begin{tikzpicture}[yscale=1]
\draw[white, fill=white] (4,0) circle (.3cm);
\node[circle] at (4,0) (e) {};
\draw[draw = black] (4,0) circle (0.1cm);
\draw[fill = black] (1,-1.25) circle (0.1cm);
\draw[fill = black] (3,-1.5) circle (0.1cm);
\draw[fill = black] (5,-1.5) circle (0.1cm);
\node at (4,.5) {$(0)$};
\node at (1,-1.75) {$(2)$};
\node at (3,-2.0) {$(3)$};
\node at (5,-2.0) {$(5)$};
\node at (7,-1.75) {$\cdots$};
\draw [->,line join=round,decorate, decoration={zigzag, segment length=6, amplitude=.9,post=lineto, post length=2pt}]  (e) -- (1.25,-1.2);
\draw [->,line join=round,decorate, decoration={zigzag, segment length=6, amplitude=.9,post=lineto, post length=2pt}]  (e) -- (3.1,-1.25);
\draw [->,line join=round,decorate, decoration={zigzag, segment length=6, amplitude=.9,post=lineto, post length=2pt}]  (e)  -- (6.8,-1.2);
\draw [->,line join=round,decorate, decoration={zigzag, segment length=6, amplitude=.9,post=lineto, post length=2pt}]  (e)  -- (4.9,-1.25);
\draw[white, fill=white] (-4,0) circle (.3cm);
\node[circle] at (-4,0) (e) {};
\draw[draw = black] (-4,0) circle (0.1cm);
\draw[fill = black] (-1,-1.25) circle (0.1cm);
\draw[fill = black] (-3,-1.5) circle (0.1cm);
\draw[fill = black] (-5,-1.5) circle (0.1cm);
\node at (-4,.5) {$(0)$};
\node at (-1,-1.75) {$(2)$};
\node at (-3,-2.0) {$(3)$};
\node at (-5,-2.0) {$(5)$};
\node at (-7,-1.75) {$\cdots$};
\draw [->,line join=round,decorate, decoration={zigzag, segment length=6, amplitude=.9,post=lineto, post length=2pt}]  (e) -- (-1.25,-1.2);
\draw [->,line join=round,decorate, decoration={zigzag, segment length=6, amplitude=.9,post=lineto, post length=2pt}]  (e) -- (-3.1,-1.25);
\draw [->,line join=round,decorate, decoration={zigzag, segment length=6, amplitude=.9,post=lineto, post length=2pt}]  (e)  -- (-6.8,-1.2);
\draw [->,line join=round,decorate, decoration={zigzag, segment length=6, amplitude=.9,post=lineto, post length=2pt}]  (e)  -- (-4.9,-1.25);
\end{tikzpicture}\caption{The Zariski spectrum of the Baerification of the ring $\mathbb{Z}[x]/(x^2-1)$, which is given by the disjoint union of two copies of the Zariski spectrum of $\mathbb{Z}$, $\SpecZ\coprod\SpecZ$.}\label{fig:2xSpecZ}
\end{figure}
\end{example}

Rump and Speed's constructions depend purely on the underlying complete Boolean algebras of regular open subsets and annihilator ideals respectively. We now show that the collection of tt-annihilator ideals of an essentially small tt-category with no nilpotent objects forms a complete Boolean algebra, which paves the way for producing an analogous Baerification construction in tt-geometry (Construction \ref{ConstructionttBaer}).

\begin{proposition}\label{PropBoolean}
    Let $\K$ be an essentially small tt-category with no nilpotent objects. Then the collection $A(\mathcal{K})$ of tt-annihilator ideals of $\mathcal{K}$ is a complete Boolean algebra.
\end{proposition}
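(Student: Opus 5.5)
The plan is to present $A(\K)$ as the set of fixed points of the closure operator $\I\mapsto\I^{**}$ on the complete lattice of tt-ideals. I would then verify directly that, with the induced operations, it is a complete lattice in which $\I\mapsto\I^*$ is a complement.

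\emph{Step 1 (Galois connection).} The assignment $\mathcal{S}\mapsto\mathcal{S}^*$ is inclusion-reversing, and $\mathcal{S}\subseteq\mathcal{S}^{**}$ for every subcategory $\mathcal{S}$. Both facts are immediate from the symmetry of $\otimes$. Consequently $\mathcal{S}^{***}=\mathcal{S}^*$, so $\I$ is a tt-annihilator ideal if and only if $\I=\I^{**}$, and $(-)^{**}$ is a closure operator.

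\emph{Step 2 (completeness).} Arbitrary intersections of annihilator ideals are annihilators, since $\bigcap_\lambda \mathcal{S}_\lambda^*=(\bigcup_\lambda\mathcal{S}_\lambda)^*$. This is the infinitary version of Proposition \ref{Prop1ttDML}, and its proof carries over verbatim. So $A(\K)$ is closed under arbitrary meets, which are intersections. It is therefore a complete lattice, with joins $\bigvee_\lambda\I_\lambda=(\bigcup_\lambda\I_\lambda)^{**}=\Thick(\I_\lambda)^{**}$, bottom $(0)=\K^*$ and top $\K=(0)^*$.

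\emph{Step 3 (complements).} For $\I\in A(\K)$ the candidate complement is $\I^*$. First, $\I\cap\I^*=0$: if $x\in\I\cap\I^*$ then $x\otimes x\simeq0$, and since there are no nilpotent objects, $x\simeq0$. Second, the join is $(\I\cup\I^*)^{**}=(\I^*\cap\I^{**})^*=(\I^*\cap\I)^*=0^*=\K$, using Step 2 and $\I^{**}=\I$.

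\emph{Step 4 (distributivity).} This is the step I expect to be the main obstacle, since a complemented complete lattice need not be Boolean. I would show the key identity $\I\cap\J=0\iff\J\subseteq\I^*$ for $\I,\J\in A(\K)$. One direction follows because $\I\otimes\J\subseteq\I\cap\J$. For the other, if $i\in\I$ and $j\in\J$ then $i\otimes j\in\I\cap\J$ (both are ideals), so $i\otimes j\simeq0$. This identity says that $\I^*$ is the pseudocomplement in $A(\K)$, and combined with $\I^{**}=\I$ the lattice is a pseudocomplemented lattice in which every element is regular.

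I would then prove the meet-distributivity law $\I\cap\bigvee_\lambda\J_\lambda=\bigvee_\lambda(\I\cap\J_\lambda)$. The inclusion $\supseteq$ is clear. For $\subseteq$, set $\mathcal{M}=\bigvee_\lambda(\I\cap\J_\lambda)$ and suppose $x\in\I\cap(\bigcup\J_\lambda)^{**}$. For any $y\in\mathcal{M}^*$, the object $x\otimes y$ lies in $\I$, and for each $\lambda$ and each $j\in\J_\lambda$ we have $x\otimes y\otimes j\in\I\cap\J_\lambda$, which $y$ annihilates. Hence $x\otimes y\otimes y\otimes j\simeq0$, and the argument of Lemma \ref{LemttAnnRadical} (radicality, using the absence of nilpotents) gives $x\otimes y\otimes j\simeq0$. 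Thus $x\otimes y\in(\bigcup\J_\lambda)^*$, and since $x\otimes y\in(\bigcup\J_\lambda)^{**}$ as well, $x\otimes y\simeq0$ by the observation in Step 3. Therefore $x\in\mathcal{M}^{**}=\mathcal{M}$.

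A complete lattice that satisfies the infinite distributive law and in which every element has a complement is a complete Boolean algebra, which completes the proof. As a cross-check, one could alternatively use Lemma \ref{LemAnniffOA} and Lemma \ref{LemInteriors} to identify $A(\K)$ with the regular open sets of $\SpcK$ (the sets $U(\I)^\circ$ for $\I\in A(\K)$), which form a complete Boolean algebra by the standard topological fact; I would keep this as a backup route.
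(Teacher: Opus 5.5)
Your argument is correct. Its first three steps coincide with the paper's proof, which also takes the meet to be intersection (an annihilator by the first tt-De Morgan law) and the join to be $\Thick(\I^*,\J^*)^{**}$, i.e.\ your $(\I\cup\J)^{**}$. The paper likewise shows that $\I^*$ and $\I^{**}$ are complementary via $\I^*\cap\I^{**}=0$ and $\Thick(\I^*,\I^{**})^{**}=(\I^{**}\cap\I^*)^*=\K$.

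The difference is that the paper stops there. It treats only binary meets and joins and never verifies distributivity. You instead note, rightly, that a complemented complete lattice need not be Boolean. You get completeness from the closure of $A(\K)$ under arbitrary intersections, and you prove the infinite distributive law by hand. That computation is sound: you show $(x\otimes y\otimes j)^{\otimes 2}\simeq 0$ and then $x\otimes y\in(\bigcup_\lambda\J_\lambda)^*\cap(\bigcup_\lambda\J_\lambda)^{**}=0$. So your version actually establishes the full statement, at the cost of length.

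There is a shorter route to the same end. Without nilpotents, $\I\cap\J=0\iff\J\subseteq\I^*$ for tt-ideals, and $\I^*$ is radical by Lemma \ref{LemttAnnRadical}. Hence $\I^*$ is the pseudocomplement of $\I$ in the frame $\ThickK$. So $A(\K)$ is exactly the set of $\neg\neg$-fixed elements of a frame, and these always form a complete Boolean algebra. Your Step 4 is in effect a hands-on proof of that general fact in this setting.

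Two minor slips:
\begin{enumerate}
\item In Step 4, both of your arguments for $\I\cap\J=0\iff\J\subseteq\I^*$ prove the forward implication. The converse is just $\I\cap\I^*=0$ from Step 3, and the identity is not used afterwards anyway.
\item In your backup route, the regular open sets must be taken in the Hochster dual $\SpcKinv$, not in $\SpcK$. That identification is what the paper establishes afterwards in Proposition \ref{PropositionBooleanIso}.
\end{enumerate}
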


\begin{proof}
    For tt-annihilator ideals $\mathcal{I}^*,\mathcal{J}^*\in A(\mathcal{K})$, we define the meet and join as $\mathcal{I}^*\cap\mathcal{J}^*$ and $\Thick(\mathcal{I}^*,\mathcal{J}^*)^{**}$ respectively. Note that the intersection of tt-annihilator ideals is always a tt-annihilator ideal since the first tt-De Morgan law always holds by Proposition \ref{Prop1ttDML}, and in the case that $\mathcal{K}$ satisfies the second tt-De Morgan law, the join of two tt-annihilator ideals $\I^*$ and $\J^*$ is simply given by $(\mathcal{I}\cap\mathcal{J})^*$.
    
    It remains to show that $A(\mathcal{K})$ has complements. To see this, observe that 
    \begin{align*}
        &\mathcal{I}^*\wedge\mathcal{I}^{**}=\mathcal{I}^*\cap\mathcal{I}^{**}=(0),\\
        \text{and } ~~~ &\mathcal{I}^*\vee\mathcal{I}^{**}=\Thick(\mathcal{I}^*,\mathcal{I}^{**})^{**}=(\mathcal{I}^{**}\cap\mathcal{I}^*)^*=(0)^*=\mathcal{K}.
    \end{align*}
\end{proof}

The following lemmas are required to prove that analogously to the commutative algebra case, this complete Boolean algebra of tt-annihilator ideals is isomorphic to the complete Boolean algebra of regular open subsets of $\SpcKinv$. As in the commutative algebra case presented in Section 1 of \cite{DT21}, we define the complete Boolean algebra $\Omega^{\mathrm{reg}}(\SpcKinv)$ by the relation $A\leq B$ if and only if $A\supseteq B$, $A\vee B=A\cap B$, and $A\wedge B=\Big(\overline{A\cup B}\Big)^\circ$ for $A,B\in\Omega^{\mathrm{reg}}(\SpcKinv)$.

\begin{lemma}\label{LemIntRegOpen}
    Let $X$ be a topological space and let $C\subseteq X$ be a closed set. Then $C^\circ$ is regular open.
\end{lemma}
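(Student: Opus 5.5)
We need to show $\overline{C^\circ}^\circ = C^\circ$. The plan is to prove the two inclusions separately, using only monotonicity of closure and interior together with the fact that $C$ is closed.

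For the inclusion $C^\circ \subseteq \overline{C^\circ}^\circ$: we have $C^\circ \subseteq \overline{C^\circ}$. Taking interiors, and using that $C^\circ$ is open so that it equals its own interior, monotonicity of the interior operator gives $C^\circ = (C^\circ)^\circ \subseteq \overline{C^\circ}^\circ$.

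For the reverse inclusion $\overline{C^\circ}^\circ \subseteq C^\circ$: first, $C^\circ \subseteq C$. Since $C$ is closed, the closure of any subset of $C$ stays inside $C$, so $\overline{C^\circ} \subseteq \overline{C} = C$. Applying the interior operator, which is monotone, yields $\overline{C^\circ}^\circ \subseteq C^\circ$.

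Combining the two inclusions gives $C^\circ = \overline{C^\circ}^\circ$, which is exactly the definition of $C^\circ$ being regular open. There is no real obstacle here. The only point needing care is the second inclusion, where we must invoke that $C$ is closed, so that $\overline{C}=C$; without this hypothesis the argument fails. Everything else is the elementary monotonicity and idempotence of the closure and interior operators.
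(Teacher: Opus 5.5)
Your proof is correct and follows the paper's argument: the inclusion $C^\circ\subseteq\big(\overline{C^\circ}\big)^\circ$ holds for any open set, and the reverse inclusion comes from $\overline{C^\circ}\subseteq C$ (since $C$ is closed) by taking interiors.
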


\begin{proof}
    $C^\circ\subseteq\Big(\overline{C^\circ}\Big)^\circ$ since this is true for any open set. Conversely, $\overline{C^\circ}\subseteq C$ since $C$ is closed. Hence, $\Big(\overline{C^\circ}\Big)^\circ\subseteq C^\circ$.
\end{proof}

Recall from Lemma \ref{LemAnniffOA} that a tt-annihilator ideal $\I$ is necessarily of the form $\mathcal{I}=O_A=\{k\in\mathcal{K}~|~A\subseteq U(k)\}=\bigcap_{\mathcal{P}\in A}\mathcal{P}$ for some open subset $A\subseteq\SpcKinv$.

\begin{lemma}
    Let $A$ and $B$ be regular open subsets of $\SpcKinv$. Then $O_A\wedge O_B=O_{A\cup B}$ and $O_A\vee O_B=O_{A\cap B}$. 
\end{lemma}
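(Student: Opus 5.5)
The plan is to reduce both identities to two facts: the description of tt-annihilator ideals from Lemma \ref{LemAnniffOA}, and a formula for the annihilator of $O_A$ in terms of $A$. The meet is the easy half. In $A(\K)$ the meet is intersection, so
\begin{equation*}
O_A\wedge O_B=O_A\cap O_B=\bigcap_{\P\in A}\P\cap\bigcap_{\P\in B}\P=\bigcap_{\P\in A\cup B}\P=O_{A\cup B}.
\end{equation*}
Here $A\cup B$ is open, so $O_{A\cup B}$ is again a tt-annihilator ideal by Lemma \ref{LemAnniffOA}. Regularity is not needed for this half.

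For the join, the key step is the formula
\begin{equation*}
O_A^*=O_{(A^c)^\circ}\quad\text{for any open } A\subseteq\SpcKinv,
\end{equation*}
where complements and interiors are taken in $\SpcKinv$. I will prove it in two steps.
\begin{enumerate}
\item Write $A=\supp(\J)$. The second half of the proof of Lemma \ref{LemAnniffOA} shows $O_A=\J^*$, so $O_A^*=\J^{**}=(\J^*)^*=O_{\supp(\J^*)}=O_{\supp(O_A)}$, applying the same argument to the tt-ideal $\J^*$.
\item Compute $\supp(O_A)$. An object $x$ lies in $O_A$ iff $\supp(x)\cap A=\varnothing$, i.e. iff $\supp(x)\subseteq A^c$. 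Hence $\supp(O_A)=\bigcup_{x\in O_A}\supp(x)$ is contained in $(A^c)^\circ$. Conversely, the sets $\supp(x)$ form a basis of $\SpcKinv$, so every point of $(A^c)^\circ$ lies in some $\supp(x)\subseteq A^c$, and such an $x$ lies in $O_A$. Thus $\supp(O_A)=(A^c)^\circ$.
\end{enumerate}

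Now compute the join using the first tt-De Morgan law (Proposition \ref{Prop1ttDML}), the key formula, and the meet computation above:
\begin{equation*}
O_A\vee O_B=\Thick(O_A,O_B)^{**}=(O_A^*\cap O_B^*)^*=\big(O_{(A^c)^\circ\cup(B^c)^\circ}\big)^*.
\end{equation*}
Applying the key formula once more gives $O_{C}$ with
\begin{equation*}
C=\big(((A^c)^\circ\cup(B^c)^\circ)^c\big)^\circ=(\overline{A}\cap\overline{B})^\circ=\overline{A}^\circ\cap\overline{B}^\circ=A\cap B.
\end{equation*}
The last equality is exactly where regularity of $A$ and $B$ is used.

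The main obstacle is the support computation $\supp(O_A)=(A^c)^\circ$. It needs care because it uses $\supp(x\otimes\J)=\supp(x)\cap\supp(\J)$ and the absence of nilpotents to pass from $\supp(x)\cap A=\varnothing$ to $x\otimes\J\simeq0$, as in Lemma \ref{LemInteriors}. It also needs the correct identification of the basic open sets of the inverse topology. Once that step is in place, the rest is formal bookkeeping with the first De Morgan law.
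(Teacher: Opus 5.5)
Your proof is correct. The meet half is identical to the paper's, but you handle the join by a different route.

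\textbf{The paper's route.} The paper never uses the formula $\Thick(O_A,O_B)^{**}$ or the first tt-De Morgan law. It verifies directly that $O_{A\cap B}$ is the least upper bound. Antitonicity of $A\mapsto O_A$ gives $O_A,O_B\subseteq O_{A\cap B}$. Now suppose $O_A,O_B\subseteq O_C$ for some open $C$; every tt-annihilator ideal has this form by Lemma \ref{LemAnniffOA}. Applying $U(-)$ and the identity $U(O_A)=\overline{A}$ gives $\overline{C}\subseteq\overline{A}\cap\overline{B}$. Hence $C\subseteq(\overline{C})^\circ\subseteq(\overline{A})^\circ\cap(\overline{B})^\circ=A\cap B$, and so $O_{A\cap B}\subseteq O_C$.

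\textbf{Your route.} You compute the join from its explicit description in Proposition \ref{PropBoolean}. You translate the tt-annihilator into the topological pseudocomplement via $O_A^*=O_{(A^c)^\circ}$. Regularity enters at the same place in both arguments, namely $(\overline{A})^\circ=A$.

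\textbf{Comparison.} Your route gives slightly more: the identity $O_A^*=O_{(A^c)^\circ}$ shows that $A\mapsto O_A$ respects complements. It does, however, rely on $\Thick(-,-)^{**}$ really being the least upper bound in $A(\K)$. That is true, because any annihilator ideal $\mathcal{L}$ satisfies $\mathcal{L}=\mathcal{L}^{**}$. The paper's order-theoretic argument is shorter, and its key identity $U(O_A)=\overline{A}$ is reused for injectivity in Proposition \ref{PropositionBooleanIso}.

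\textbf{A small correction to your last paragraph.} Step 2 needs neither $\supp(x\otimes\J)=\supp(x)\cap\supp(\J)$ nor the absence of nilpotents. The equivalence $x\in O_A\iff\supp(x)\cap A=\varnothing$ is just the definition of $O_A$. The rest only uses that the sets $\supp(x)$ form a basis of $\SpcKinv$. The no-nilpotents hypothesis is actually used in step 1, where you identify $O_{\supp(\J)}$ with $\J^*$.
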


\begin{proof}
    Firstly, $O_A\wedge O_B=O_A\cap O_B$ from Proposition \ref{PropBoolean}, and
    \begin{equation*}
        O_A\cap O_B = \Bigg(\bigcap_{\mathcal{P}\in A}\mathcal{P}\Bigg) \bigcap \Bigg(\bigcap_{\mathcal{P}\in B}\mathcal{P}\Bigg) = \bigcap_{\mathcal{P}\in A\cup B}\mathcal{P} = O_{A\cup B}.
    \end{equation*}
    Secondly, $A\cap B\subseteq A$ and $A\cap B\subseteq B$, so $O_A\subseteq O_{A\cap B}$ and $O_B\subseteq O_{A\cap B}$. Now suppose $C$ is an open subset of $\SpcKinv$ such that $O_A\subseteq O_C$ and $O_B\subseteq O_C$. Hence, $U(O_C)\subseteq U(O_A)$ and $U(O_C)\subseteq U(O_B)$, and by observing that $U(O_A)$ is the smallest closed subset of $\SpcKinv$ which contains $A$, i.e., $U(O_A)=\overline{A}$, we have $\overline{C}\subseteq\overline{A}$ and $\overline{C}\subseteq\overline{B}$. Therefore,
    \begin{align*}
        &C\subseteq\Big(\overline{C}\Big)^\circ\subseteq\Big(\overline{A}\Big)^\circ = A,\\
        \text{and } ~~~ &C\subseteq\Big(\overline{C}\Big)^\circ\subseteq\Big(\overline{B}\Big)^\circ = B
    \end{align*}
    since $A$ and $B$ are regular open subsets of $\SpcKinv$. Hence, $C\subseteq A\cap B$ and thus, $O_{A\cap B}\subseteq O_C$, which completes that proof that the join of $O_A$ and $O_B$ is $O_{A\cap B}$.
\end{proof}

\begin{proposition}\label{PropositionBooleanIso}
    Let $\mathcal{K}$ be an essentially small tt-category with no nilpotent objects. Then the Boolean algebra of tt-annihilator ideals is isomorphic to the Boolean algebra of regular open subsets of $\SpcKinv$.
\end{proposition}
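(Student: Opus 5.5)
The natural candidate is the map $\Phi\colon\Omega^{\mathrm{reg}}(\SpcKinv)\to A(\K)$, $A\mapsto O_A$, with proposed inverse $\Psi\colon\I\mapsto\supp(\I^*)$, or equivalently $\I\mapsto U(\I)^\circ$. By Lemma \ref{LemAnniffOA}, each $O_A$ for $A$ open is a tt-annihilator ideal; indeed, writing $A=\supp(\J)$ we have $O_A=\J^*$. So $\Phi$ is well defined. The preceding lemma shows that $\Phi$ sends $A\cup B$ to $O_A\wedge O_B$ and $A\cap B$ to $O_A\vee O_B$. These are exactly the meet and join in $\Omega^{\mathrm{reg}}(\SpcKinv)$ with the reversed order; strictly, the meet there is $(\overline{A\cup B})^\circ$, which I will handle in the next step. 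Since a lattice bijection between Boolean algebras automatically preserves complements, the proof reduces to showing that $\Phi$ is an order isomorphism: $A\supseteq B$ if and only if $O_A\subseteq O_B$, together with surjectivity.

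\emph{Surjectivity.} Let $\I$ be a tt-annihilator ideal. The proof of Lemma \ref{LemAnniffOA} gives $\I=O_{U(\I)^\circ}$. Since $U(\I)$ is closed in $\SpcKinv$, Lemma \ref{LemIntRegOpen} shows that $U(\I)^\circ$ is regular open, so $\I$ lies in the image of $\Phi$.

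\emph{Order embedding.} If $A\supseteq B$, then clearly $O_A\subseteq O_B$, since $O_A$ is an intersection over more primes. For the converse, I use the observation from the previous proof that $U(O_A)=\overline{A}$. (This holds because $O_A$ is radical, so $U(O_A)$ is the closed set of primes containing $\bigcap_{\P\in A}\P$, namely the closure of $A$.) Then $O_A\subseteq O_B$ implies $U(O_B)\subseteq U(O_A)$, that is, $\overline{B}\subseteq\overline{A}$. Taking interiors and using regularity gives $B=(\overline{B})^\circ\subseteq(\overline{A})^\circ=A$. This argument gives injectivity as well. An order isomorphism between complete lattices preserves all meets and joins, which also settles the subtlety about $(\overline{A\cup B})^\circ$ versus $A\cup B$: one should check that $O_{A\cup B}=O_{(\overline{A\cup B})^\circ}$, and this follows from $U(O_{A\cup B})=\overline{A\cup B}=\overline{(\overline{A\cup B})^\circ}$.

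The main obstacle is the identity $U(O_A)=\overline{A}$ in the dual topology. It needs the fact that the closure of a set of primes in $\SpcKinv$ is the set of primes containing their intersection. I would justify this via the frame isomorphism $\ThickK\cong\Omega(\SpcKinv)$: the closed sets of $\SpcKinv$ are exactly the sets $U(\J)$ for radical $\J$, and $U(\J)\supseteq A$ holds if and only if $\J\subseteq\bigcap_{\P\in A}\P$. The smallest such closed set is therefore $U(O_A)$, which is the required identity.
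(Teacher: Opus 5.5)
Your proof is correct and follows the paper's argument. It uses the same map $A\mapsto O_A$, obtains surjectivity from $\mathcal{I}=O_{U(\mathcal{I})^\circ}$ and Lemma \ref{LemIntRegOpen}, and gets injectivity from $U(O_A)=\overline{A}$ together with regularity. The only difference is that you prove $\Phi$ is an order isomorphism (for the reversed order on regular opens) and deduce preservation of meets and joins abstractly, which also justifies the step $O_{(\overline{A\cup B})^\circ}=O_{A\cup B}$ that the paper uses without comment.
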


\begin{proof}
    Define $\phi\colon\Omega^{\mathrm{reg}}(\mathrm{Spc}(\mathcal{K})^\vee)\rightarrow A(\mathcal{K})$ by $A\mapsto O_A=\{k\in\mathcal{K}~|~A\subseteq U(k)\}=\bigcap_{\mathcal{P}\in A}\mathcal{P}$, and let $A,B\in\Omega^{\mathrm{reg}}(\mathrm{Spc}(\mathcal{K})^\vee)$ such that $\phi(A)=\phi(B)$, i.e., $O_A=O_B$. Since every tt-annihilator ideal is radical by Lemma \ref{LemttAnnRadical}, we have that $U(O_A)=U(O_B)$, and from before we know that $U(O_A)=\overline{A}$ and $U(O_B)=\overline{B}$. Hence,
    \begin{equation*}
        \overline{A}=\overline{B}\implies \Big(\overline{A}\Big)^\circ=\Big(\overline{B}\Big)^\circ \implies A=B
    \end{equation*}
    since $A$ and $B$ are regular open subsets of $\SpcKinv$, so $\phi$ is therefore injective. Now, for every tt-annihilator ideal $\mathcal{I}\in A(\mathcal{K})$, $I=O_{U(\mathcal{I})^\circ}$ by Lemma \ref{LemAnniffOA}, and $U(\mathcal{I})^\circ$ is regular open by Lemma \ref{LemIntRegOpen}. Hence, $\phi$ is surjective. It remains to show that $\phi$ preserves meets and joins:
    \begin{align*}
        &\phi(A\vee B) = \phi(A\cap B) = O_{A\cap B} = O_A\vee O_B = \phi(A)\vee\phi(B),\\
        \text{and } ~~~ &\phi(A\wedge B) = \phi\bigg(\Big(\overline{A\cup B}\Big)^\circ\bigg) = O_{\big(\overline{A\cup B}\big)^\circ} = O_{A\cup B} = O_A\wedge O_B = \phi(A)\wedge \phi(B).
    \end{align*}
\end{proof}

The above isomorphism allows us to produce a Baerification construction for essentially small rigid tt-categories as follows. Recall that for any tt-category $\K$ and any open set $U$ of $\SpcKinv$, we have a localization sequence $\Gamma_U\K\rightarrow\K\rightarrow L_U\K$, where $\Gamma_U\K$ can be thought of as the piece of $\K$ supported at $U$, and $L_U\K$ the piece of $\K$ supported away from $U$.

\begin{construction}\label{ConstructionttBaer}
    Let $\mathcal{K}$ be an essentially small rigid tt-category and let $A(\mathcal{K})$ be the complete Boolean algebra of all tt-annihilator ideals of $\mathcal{K}$. A family of tt-annihilator ideals $D\subseteq A(\mathcal{K})$ is said to be a partition of $\mathcal{K}$ if for $\mathcal{D},\mathcal{E}\in D$ distinct we have $\mathcal{D}\cap\mathcal{E}=(0)$, and $\Thick(\mathcal{D}~|~\mathcal{D}\in D)^{**}=\K$. For two partitions $C,D\in\pi(\mathcal{K})$, we write $C\leq D$ when $D$ is finer than $C$, and denote by $\pi(\mathcal{K})$ the directed set of all finite partitions of $A(\mathcal{K})$.

We define a family of tt-categories $\{\mathcal{K}_{D}~|~D\in\pi(\mathcal{K})\}$ and functors $F_{C,D}\colon\mathcal{K}_{C}\rightarrow \mathcal{K}_{D}$ when $C\leq D$ as follows:
\begin{enumerate}
    \item $\mathcal{K}_D=\prod\limits_{\mathcal{D}\in D}L_{\supp(\mathcal{D}^*)}\mathcal{K}$ for each $D\in\pi(\mathcal{K})$.
    \item For $C\leq D$ in $\pi(\mathcal{K})$, since $D$ is a finer partition than $C$, we have that each $\D\in D$ is contained in some $\C\in C$, giving us a functor
    \begin{equation*}
        F_{C,D}\colon\prod\limits_{\mathcal{C}\in C}L_{\supp(\mathcal{C}^*)}\mathcal{K}\longrightarrow\prod\limits_{\mathcal{D}\in D}L_{\supp(\mathcal{D}^*)}\mathcal{K}
    \end{equation*}
    induced by $L_{\supp(\mathcal{C}_i^*)}\mathcal{K}\rightarrow\prod\limits_{\D\subseteq\C_i}L_{\supp(\mathcal{D}^*)}\mathcal{K}$ for all $\C_i\in C$.
\end{enumerate}
We call the filtered colimit of the above family the \textbf{Baerification} of $\mathcal{K}$, denoted by $B(\mathcal{K})$, and let $\beta\colon\mathcal{K}\rightarrow B(\mathcal{K})$ be the inclusion functor of the subcategory $\mathcal{K}$ into its Baerification $B(\mathcal{K})$. Note that this functor will not necessarily be fully faithful, since a localization functor $\mathcal{K}\rightarrow L_U\mathcal{K}$ is not necessarily fully faithful.
\end{construction}

The following results show that firstly, the Baerification of an essentially small rigid tt-category is in fact a Baer tt-category, and secondly that it satisfies the expected universal property.

\begin{lemma}\label{LemStandard}
    Let $x\in B(\mathcal{K})$. Then there is a family of idempotents $\{e_i~|~1\leq i\leq n\}$ such that $\bigoplus_{1\leq i\leq n}e_i=\mathbbm{1}\in B(\mathcal{K})$ and a family of objects $\{k_i~|~1\leq i\leq n\}\subseteq\mathcal{K}$ such that
    \begin{equation*}
        x = \bigoplus\limits_{1\leq i\leq n}(\beta(k_i)\otimes e_i).
    \end{equation*}
    Furthermore, the idempotents $\{e_i~|~1\leq i\leq n\}$ can be represented by $(0,\mathbbm{1})\in L_{\supp(\mathcal{D}_i)}\mathcal{K}\times L_{\supp(\mathcal{D}_i^*)}\mathcal{K}$, where $\{\mathcal{D}_i~|~1\leq i\leq n\}$ is a partition of $\mathcal{K}$.
\end{lemma}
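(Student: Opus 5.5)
The plan is to use the fact that $B(\K)$ is a filtered colimit: every object of $B(\K)$ is represented at some finite stage. Concretely, given $x\in B(\K)$, I will choose a partition $D=\{\D_1,\dots,\D_n\}\in\pi(\K)$ and an object $\tilde x=(x_1,\dots,x_n)\in\K_D=\prod_{i}L_{\supp(\D_i^*)}\K$ whose image in $B(\K)$ is $x$. This is possible because objects of a filtered colimit of categories are equivalence classes of objects at the individual stages.

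Next I lift each component to $\K$. Each $L_{\supp(\D_i^*)}\K$ is a Verdier quotient of $\K$ (it is the idempotent completion of one, for the essentially small rigid version), so the localization functor is essentially surjective on objects, at least up to summands. I will take $k_i\in\K$ with $x_i\cong k_i$ in $L_{\supp(\D_i^*)}\K$. If idempotent completion intervenes, I will replace $k_i$ by $k_i\oplus\Sigma k_i$ or refine the statement to allow summands. This is the step I expect to be the main obstacle, and I would first check whether the paper's $L_U\K$ is taken as the plain Verdier quotient $\K/\Gamma_U\K$, in which case there is no issue.

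Now define $e_i\in\K_D$ to be the object with $\mathbbm{1}$ in the $i$-th factor and $0$ elsewhere. Each $e_i$ is idempotent, with complementary idempotent the object carrying $0$ in the $i$-th factor and $\mathbbm{1}$ elsewhere. The relevant triangle is the split triangle $e_i\to\mathbbm{1}\to f_i\to\Sigma e_i$ in the product category, and $e_i\otimes f_i=0$. Clearly $\bigoplus_i e_i=\mathbbm{1}_{\K_D}$. These relations are preserved by the exact monoidal functor $\K_D\to B(\K)$, so they also hold in $B(\K)$.

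Since $\beta(k_i)$ is represented at stage $D$ by the diagonal image $(k_i,\dots,k_i)$, the tensor product $\beta(k_i)\otimes e_i$ is represented by $k_i$ in the $i$-th slot and $0$ elsewhere. Summing over $i$ recovers $(x_1,\dots,x_n)=\tilde x$, which gives $x\cong\bigoplus_i\beta(k_i)\otimes e_i$.

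For the final clause, I identify the $i$-th factor via the two-piece partition $\{\D_i,\D_i^*\}$. This is a partition because $\D_i\cap\D_i^*=0$ (there are no nilpotents) and $\Thick(\D_i,\D_i^*)^{**}=\K$ by Proposition~\ref{PropBoolean}. The stage for this partition is $L_{\supp(\D_i^*)}\K\times L_{\supp(\D_i)}\K$, using $\D_i^{**}=\D_i$. It maps to $\K_D$ via $F$, since $D$ refines it: $\D_i$ goes to itself, and each $\D_j$ with $j\neq i$ lies in $\D_i^*$. Under this map, the object that is $\mathbbm{1}$ on the $\D_i$-factor and $0$ on the other factor goes to $e_i$. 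Reordering the factors gives the representation $(0,\mathbbm{1})\in L_{\supp(\D_i)}\K\times L_{\supp(\D_i^*)}\K$ stated in the lemma.
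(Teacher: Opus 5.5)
Your proposal is correct and is essentially the paper's proof: represent $x$ at a finite stage $\prod_{i}L_{\supp(\D_i^*)}\K$, let $e_i$ be the Kronecker-delta objects, and take $k_i=x_i$. The paper does this directly, implicitly treating $L_U\K$ as the plain Verdier quotient so that no lifting issue arises; your fallback of replacing $k_i$ by $k_i\oplus\Sigma k_i$ would not have worked anyway, since it represents $x_i\oplus\Sigma x_i$ rather than $x_i$, and your check of the final clause (the two-piece partition $\{\D_i,\D_i^*\}$ is refined by $D$ and sends $(0,\mathbbm{1})$ to $e_i$) is correct and supplies a step the paper leaves implicit.
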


\begin{proof}
    By construction,
    \begin{equation*}
        x=(x_i)_{1\leq i\leq n}\in\prod_{1\leq i\leq n}L_{\supp(\mathcal{D}_i^*)}\mathcal{K}
    \end{equation*}
    for some partition $\{\mathcal{D}_i~|~1\leq i\leq n\}$ of $\mathcal{K}$. For each $1\leq i\leq n$, we set
    \begin{equation*}
        e_i = (\delta_{ij})_{1\leq j\leq n}\in\prod_{1\leq j\leq n}L_{\supp(\mathcal{D}_j^*)}\mathcal{K}
    \end{equation*}
    where $\delta$ is the Kronecker delta, and set $k_i = x_i$ which completes the proof.
\end{proof}

The representations of the idempotents $e_i$ from the previous lemma will hereafter be denoted by $(0,\mathbbm{1)}_{\mathcal{D}_i,\mathcal{D}_i^*}$ for convenience.

\begin{lemma}\label{LemIdempotentGenerated}
    Let $a\in\mathcal{K}$. Then the annihilator of $\beta(a)$ in $B(\mathcal{K})$ is generated by the idempotent $e_a=(0,\mathbbm{1)}_{(a)^*,(a)^{**}}$.
\end{lemma}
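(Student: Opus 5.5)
We need to show that $\beta(a)^* = \Thick(e_a)$ in $B(\K)$. The first step is to check that $D_a:=\{(a)^*,(a)^{**}\}$ is a partition in the sense of Construction \ref{ConstructionttBaer}. Indeed $(a)^*\cap(a)^{**}=(0)$, because $\K$ has no nilpotent objects. Moreover $\Thick((a)^*,(a)^{**})^{**}=((a)^{**}\cap(a)^{***})^*=(0)^*=\K$ by Proposition \ref{Prop1ttDML}. So $e_a=(0,\mathbbm{1})$ lives in $\K_{D_a}=L_{\supp((a)^{**})}\K\times L_{\supp((a)^{***})}\K$, using the notation of Lemma \ref{LemStandard}. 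Since $(a)^{***}=(a)^*$, the first factor is the part away from $\supp((a)^{**})$ and the second is the part away from $\supp((a)^*)$. The factor on which $e_a$ is $\mathbbm{1}$ should be the one where $\beta(a)$ vanishes.

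\textbf{Inclusion $\Thick(e_a)\subseteq\beta(a)^*$.} It suffices to check that $\beta(a)\otimes e_a\simeq0$ in $\K_{D_a}$, since the transition functors are tensor functors. The image of $a$ in the relevant factor is the localization of $a$ away from $\supp(\mathcal{E})$, with $\mathcal{E}=(a)^{**}$ or $(a)^{*}$ according to the labelling. We have $\supp(a)\subseteq \supp((a)^{**})$ because $a\in(a)^{**}$. Hence $a$ lies in $\Gamma_{\supp((a)^{**})}\K$, and therefore dies in $L_{\supp((a)^{**})}\K$. So $a$ vanishes on exactly the coordinate where $e_a$ is nonzero, assuming the labelling is set up that way (I will fix the convention so that this holds). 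Tensoring then gives $0$.

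\textbf{Inclusion $\beta(a)^*\subseteq\Thick(e_a)$.} Let $x\in B(\K)$ with $x\otimes\beta(a)\simeq0$. By Lemma \ref{LemStandard}, and after refining to a common partition, $x$ is represented in some $\K_D$ with $D\geq D_a$. Write $x=\bigoplus_i\beta(k_i)\otimes e_i$, where each $\mathcal{D}_i\in D$ lies inside either $(a)^*$ or $(a)^{**}$. I then argue coordinatewise.

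For $\mathcal{D}_i\subseteq(a)^{**}$, I want to show that the coordinate of $x$ vanishes modulo refinement. The condition $x\otimes\beta(a)=0$ in the colimit means that it vanishes at some finer stage. On such a coordinate, $a$ is "supported everywhere": $\supp(a)$ is dense in $\supp(\mathcal{D}_i)$ in the Hochster dual, because $(a)^{**}$ corresponds under Proposition \ref{PropositionBooleanIso} to the regular open hull of $\supp(a)$. From $k_i\otimes a\simeq0$ in $L_{\supp(\mathcal{D}_i^*)}\K$ I want to deduce $k_i\simeq0$ there. Supports in the localization are computed on the closed complement $U(\mathcal{D}_i^*)=\overline{\supp(\mathcal{D}_i)}$. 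There $\supp(k_i)\cap\supp(a)=\varnothing$, and $\supp(k_i)$ is open in the dual topology. Density of $\supp(a)$ in $\overline{\supp(\mathcal{D}_i)}$ then forces $\supp(k_i)\cap\overline{\supp(\mathcal{D}_i)}=\varnothing$, so $k_i$ vanishes in that localization. I use the standard fact that $\mathrm{Spc}(L_U\K)$ is the complement of $U$.

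Consequently $x$ is supported only on coordinates inside $(a)^*$. Hence $x=x\otimes e_a\in\Thick(e_a)$, using $e_a\otimes e_i=e_i$ or $0$ according to containment.

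The main obstacle is this last density/support step. It requires identifying the spectrum of each localization $L_{\supp(\mathcal{D}^*)}\K$ with the closed subset $\overline{\supp(\mathcal{D})}$. It also requires handling the filtered colimit carefully, since vanishing in $B(\K)$ only means vanishing at some refined stage. The same argument applied to each finer piece resolves the latter, because refinement preserves the dichotomy.
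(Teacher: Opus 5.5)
Your argument is correct, and its skeleton is the paper's: put $x$ in the normal form of Lemma \ref{LemStandard}, refine against $\{(a)^*,(a)^{**}\}$, and compare coordinates. No labelling convention actually needs fixing. In the paper's notation $(0,\mathbbm{1})_{\D,\D^*}\in L_{\supp(\D)}\K\times L_{\supp(\D^*)}\K$, so $e_a$ is $\mathbbm{1}$ precisely on $L_{\supp((a)^{**})}\K$, which is where $a$ dies.

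The difference is in the key step.

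\textbf{The paper's route.} For an arbitrary piece $\D_i$, the paper proves the algebraic equivalence $k_i\in((a)^{**}\cap\D_i)^*\Longleftrightarrow a\otimes k_i\in\D_i^*$. In the hard direction, for $b\in(a)^{**}\cap\D_i$ the object $b\otimes k_i$ lies in $(a)^*\cap(a)^{**}$. So it squares to zero and hence vanishes.

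\textbf{Your route.} You work on spectra instead, using three facts:
\begin{enumerate}
\item $\mathrm{Spc}(L_{\supp(\D^*)}\K)\cong U(\D^*)=\overline{\supp(\D)}$, by Lemma \ref{LemmattClosure};
\item supports of objects are open in $\SpcKinv$;
\item $\supp((a)^{**})=\big(\overline{\supp(a)}\big)^\circ$.
\end{enumerate}
Together these show that an open set avoiding $\supp(a)$ on $\overline{\supp(\D_i)}$ avoids $\overline{\supp(\D_i)}$ entirely.

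\textbf{Comparison.} These are the same fact in two languages. The paper's $b\otimes k_i$ is exactly an object whose open support sits inside $\supp((a)^{**})$ and misses $\supp(a)$.

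The paper's route is self-contained and never needs the spectra of the localizations or the Hochster-dual topology.

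Yours makes the geometry visible. It also splits the statement into an easy inclusion and a hard one; in the easy one, $a\in(a)^{**}$ kills $a$ on the $e_a$-factor.

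Yours also handles the filtered colimit explicitly, by working at the finer stage where $x\otimes\beta(a)$ actually vanishes. The paper leaves this point implicit: it uses that transition functors detect vanishing of objects. That holds because $U(\D^*)$ is the union of the $U(\mathcal{E}^*)$ over the pieces $\mathcal{E}$ refining $\D$.
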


\begin{proof}
    We show that for $x\in\mathcal{K}$,
    \begin{equation*}
        \beta(a)\otimes x\simeq0\Longleftrightarrow e_a \otimes x\simeq x.
    \end{equation*}
    From Lemma \ref{LemStandard}, we can write $x$ as $\bigoplus_{1\leq i\leq n}(\beta(k_i)\otimes e_i)$. Hence,
    \begin{equation*}
        e_a \otimes x\simeq x \Longleftrightarrow e_a \otimes\beta(k_i)\otimes e_i\simeq\beta(k_i)\otimes e_i ~ \text{ for all } ~ 1\leq i\leq n,
    \end{equation*}
    where $e_i = (0,\mathbbm{1)}_{\mathcal{D}_i,\mathcal{D}_i^*}$ for some partition $\{\mathcal{D}_i~|~1\leq i\leq n\}$ of $\mathcal{K}$. Therefore, $\beta(k_i)\otimes e_i = (0,k_i)_{\mathcal{D}_i,\mathcal{D}_i^*}$. We now refine the partitions $\{(a)^*,(a)^{**}\}$ and $\{\mathcal{D}_i,\mathcal{D}_i^*\}$ to $\{((a)^*\cap\mathcal{D}_i)^*,~((a)^*\cap\mathcal{D}_i^*)^*,~((a)^{**}\cap\mathcal{D}_i)^*,~((a)^{**}\cap\mathcal{D}_i^*)^*\}$. We therefore have
    \begin{align*}
        e_a &= (\mathbbm{1},\mathbbm{1},0,0)_{\{((a)^*\cap\mathcal{D}_i)^*,~((a)^*\cap\mathcal{D}_i^*)^*,~((a)^{**}\cap\mathcal{D}_i)^*,~((a)^{**}\cap\mathcal{D}_i^*)^*\}},\\
        \text{and }~~~\beta(k_i)\otimes e_i &= (k_i,0,k_i,0)_{\{((a)^*\cap\mathcal{D}_i)^*,~((a)^*\cap\mathcal{D}_i^*)^*,~((a)^{**}\cap\mathcal{D}_i)^*,~((a)^{**}\cap\mathcal{D}_i^*)^*\}}.
    \end{align*}
    Hence, $e_a \otimes\beta(k_i)\otimes e_i\simeq\beta(k_i)\otimes e_i$ if and only if $k_i\simeq0$ in $L_{\supp(((a)^{**}\cap\mathcal{D}_i)^*)}\mathcal{K}$, and $\beta(a)\otimes\beta(k_i)\otimes e_i\simeq0$ if and only if $a\otimes k_i\simeq 0$ in $L_{\supp(\mathcal{D}_i^*)}\mathcal{K}$. But these conditions are equivalent to $k_i\in((a)^{**}\cap\mathcal{D}_i)^*$ and $a\otimes k_i\in \mathcal{D}_i^*$ respectively. To complete the proof we will show that these two conditions are in fact equivalent.

    First suppose $k_i\in((a)^{**}\cap\mathcal{D}_i)^*$ and let $b\in \mathcal{D}_i$. Then $a\otimes b\in (a)^{**}\cap \mathcal{D}_i$. Hence, $a\otimes b\otimes k_i\simeq 0$ and so, $a\otimes k_i\in\mathcal{D}_i^*$. Conversely, suppose $a\otimes k_i\in\mathcal{D}_i^*$ and let $b\in(a)^{**}\cap\mathcal{D}_i$. Then $b\otimes k_i\in(a)^{**}$ and $b\otimes a\otimes k_i\simeq 0$, i.e., $b\otimes k_i\in(a)^*$. Therefore, $(b\otimes k_i)^{\otimes2}\simeq 0$ and thus, $b\otimes k_i\simeq0$ since $\mathcal{K}$ has no nilpotent objects. Hence,
    \begin{equation*}
        b\in(k_i)^* \implies (a)^{**}\cap\mathcal{D}_i\subseteq(k_i)^* \implies k_i\in(k_i)^{**}\subseteq ((a)^{**}\cap\mathcal{D}_i)^*.
    \end{equation*}
\end{proof}

\begin{proposition}\label{PropBaer}
    $B(\mathcal{K})$ is a Baer tt-category.
\end{proposition}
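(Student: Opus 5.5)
We need to show every tt-annihilator ideal $\mathcal{S}^*$ of $B(\mathcal{K})$ is generated by an idempotent. The plan is to reduce to annihilators of single objects of the form $\beta(a)$, which Lemma \ref{LemIdempotentGenerated} already handles, and then pass to arbitrary subcategories via the Boolean algebra structure.

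\textbf{Step 1: annihilators of single objects.} Given $x\in B(\mathcal{K})$, use Lemma \ref{LemStandard} to write $x=\bigoplus_i \beta(k_i)\otimes e_i$ with the $e_i$ orthogonal idempotents summing to $\mathbbm{1}$. Then $x^*=\bigcap_i(\beta(k_i)\otimes e_i)^*$, and $y\otimes\beta(k_i)\otimes e_i\simeq0$ if and only if $y\otimes e_i\in\beta(k_i)^*$. By Lemma \ref{LemIdempotentGenerated}, $\beta(k_i)^*=\Thick(e_{k_i})$, so $(\beta(k_i)\otimes e_i)^*$ is generated by the idempotent $e_{k_i}\otimes e_i\oplus f_i$, where $f_i=\bigoplus_{j\neq i}e_j$ is the complementary idempotent. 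Intersections of finitely many idempotent-generated ideals are generated by the tensor product of the idempotents (since $\Thick(e)\cap\Thick(e')=\Thick(e\otimes e')$ for idempotents). Hence $x^*$ is generated by an idempotent $e_x$, and this idempotent can again be represented by some $(0,\mathbbm{1})_{\mathcal{D},\mathcal{D}^*}$ at a common refinement of the partitions involved.

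\textbf{Step 2: arbitrary annihilators.} For a subcategory $\mathcal{S}$, we have $\mathcal{S}^*=\bigcap_{x\in\mathcal{S}}\Thick(e_x)$. Each $e_x$ corresponds, via the partition representation, to a tt-annihilator ideal $\mathcal{D}_x^*$ of $\mathcal{K}$, that is, to an element of the complete Boolean algebra $A(\mathcal{K})$; here Proposition \ref{PropBoolean} and Proposition \ref{PropositionBooleanIso} are used. Take the meet $\mathcal{M}=\bigcap_x\mathcal{D}_x^*$ in $A(\mathcal{K})$, which exists by completeness. Since $\{\mathcal{M},\mathcal{M}^*\}$ is a finite partition (Proposition \ref{PropBoolean}), it defines an idempotent $e=(0,\mathbbm{1})_{\mathcal{M}^*,\mathcal{M}}$ in $B(\mathcal{K})$. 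We then aim to show $\mathcal{S}^*=\Thick(e)$.

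For the inclusion $\Thick(e)\subseteq\mathcal{S}^*$, note that $\mathcal{M}\subseteq\mathcal{D}_x^*$ means that $e$ lies below each $e_x$, so $e\otimes x\simeq0$ for every $x\in\mathcal{S}$.

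For the reverse inclusion, take $y\in\mathcal{S}^*$ and write it in standard form at some finite partition $D$. Then $y\in\Thick(e_x)$ for all $x$, which means that the components of $y$ vanish outside $\mathcal{D}_x^*$ for every $x$. Translating through Lemma \ref{LemAnniffOA} and the regular-open picture, the part of $y$ supported at each $\mathcal{D}\in D$ is nonzero only where $\mathcal{D}$ meets every $\mathcal{D}_x^*$, hence only where it meets $\mathcal{M}$. From this we want to conclude $e\otimes y\simeq y$.

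\textbf{Main obstacle.} The difficulty is that arbitrary meets in $A(\mathcal{K})$ are infinitary, while objects of $B(\mathcal{K})$ live at finite partitions. The key check is therefore that the condition ``$y$ vanishes on the complement of each $\mathcal{D}_x^*$'' passes to the infinite meet. I would handle this by working one component $k$ of $y$ at a time on a piece $\mathcal{D}$. The conditions give $k\otimes b\simeq0$ in the relevant localizations for all $b$ in $\mathcal{D}\cap\mathcal{D}_x^{**}$. By the first tt-De Morgan law (Proposition \ref{Prop1ttDML}), $(\bigcup_x\mathcal{D}_x^{**})^*=\bigcap_x\mathcal{D}_x^{*}$, which equals $\mathcal{M}$ up to double annihilators, and I would use this to obtain $k\in(\mathcal{D}\cap\mathcal{M}^*)^*$. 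This is the same style of argument as the final paragraph of the proof of Lemma \ref{LemIdempotentGenerated}, using the absence of nilpotents. That argument then yields $e\otimes y\simeq y$.
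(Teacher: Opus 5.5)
Your proposal is correct and follows the paper's route. You first show that single-object annihilators are idempotent generated via Lemma~\ref{LemStandard} and Lemma~\ref{LemIdempotentGenerated}, then write $\mathcal{S}^*=\bigcap_x\Thick(e_x)$ and realise it by the idempotent attached to the meet $\mathcal{M}$ in $A(\mathcal{K})$. Your explicit check of the infinitary step goes through (for $c\in\mathcal{D}\cap\mathcal{M}^*$ one gets $k\otimes c\in\mathcal{M}\cap\mathcal{M}^*=0$), and together with the genuinely complementary pair $\{\mathcal{M},\mathcal{M}^*\}$ it supplies what the paper only asserts: the paper's pair $\bigcap_k\mathcal{D}_k$, $\bigcap_k\mathcal{D}_k^*$ is not complementary in general.

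The only slip is notational. In the paper's convention, $e_a=(0,\mathbbm{1})_{(a)^*,(a)^{**}}$ generates $\beta(a)^*$ and so is attached to $(a)^*$; by the same convention, the idempotent attached to $\mathcal{M}$ is $(0,\mathbbm{1})_{\mathcal{M},\mathcal{M}^*}$, not $(0,\mathbbm{1})_{\mathcal{M}^*,\mathcal{M}}$. You only use $e$ and the $e_x$ through their meaning, so nothing in the argument is affected.
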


\begin{proof}
    Let $x = \bigoplus_{1\leq i\leq n}(\beta(k_i)\otimes e_i)\in B(\mathcal{K})$. Then
    \begin{equation*}
        (x)^* = \bigcap\limits_{1\leq i\leq n}(\beta(k_i)\otimes e_i)^* = \bigcap\limits_{1\leq i\leq n}(\beta(k_i)^{**}\otimes e_i)^*,
    \end{equation*}
    with the last equality following from Lemma \ref{LemProductsEqual}. Hence, using Lemma \ref{LemIdempotentGenerated}, one can see that $(x)^*$ is idempotent generated.

    Now let $\mathcal{I}$ be a tt-ideal of $B(\mathcal{K})$. Then
    \begin{equation*}
        \mathcal{I}^* = \bigcap\limits_{k\in\mathcal{I}}(k)^* = \bigcap\limits_{k\in\mathcal{I}}(e_k)
    \end{equation*}
    where $e_k$ is some idempotent object by the above argument. We know that each $e_k$ is of the form $(0,\mathbbm{1)}_{\mathcal{D}_k,\mathcal{D}_k^*}$ for some tt-annihilator ideals $\mathcal{D}_k,\D_k^*\subseteq\K$. Hence,
    \begin{equation*}
        \bigcap\limits_{k\in\mathcal{I}}(e_k) = \Big((0,\mathbbm{1)}_{\bigcap\limits_{k\in\mathcal{I}}\mathcal{D}_k,~\bigcap\limits_{k\in\mathcal{I}}\mathcal{D}_k^*}\Big),
    \end{equation*}
    using the fact that the intersection of tt-annihilator ideals is again a tt-annihilator ideal by Proposition \ref{PropBoolean}. $\mathcal{I}^*$ is thus generated by an idempotent object and hence, $B(\mathcal{K})$ is a Baer tt-category.
\end{proof}

\begin{definition}
    Let $\mathcal{K,L}$ be essentially small tt-categories. A functor $F\colon\mathcal{K}\rightarrow\mathcal{L}$ is \textbf{annihilator compatible} if for all tt-ideals $\mathcal{I},\mathcal{J}\subseteq\mathcal{K}$ such that $\mathcal{I}^*=\mathcal{J}^*$, we have that $F(\mathcal{I})^*=F(\mathcal{J})^*$. In the case that $\mathcal{K,L}$ are Baer tt-categories, we call $F$ a \textbf{Baer functor}.
\end{definition}

\begin{theorem}\label{ThmttUniversal}
    Let $\mathcal{K}$ be an essentially small rigid tt-category. Then there is a Baer tt-category $B(\mathcal{K})$ and an annihilator compatible functor $\beta\colon\mathcal{K}\rightarrow B(\mathcal{K})$ such that for all annihilator compatible functors $F\colon\mathcal{K}\rightarrow\mathcal{B}$ from $\mathcal{K}$ to a Baer tt-category $\mathcal{B}$, there is a unique functor $\overline{F}\colon B(\mathcal{K})\rightarrow\mathcal{B}$ such that $\overline{F}\circ\beta=F$.
\end{theorem}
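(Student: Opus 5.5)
The proof has three parts: $B(\mathcal{K})$ is Baer by Proposition \ref{PropBaer}, $\beta$ is annihilator compatible, and every annihilator compatible $F$ factors uniquely through $\beta$. The factorisation is built level by level on the filtered colimit of Construction \ref{ConstructionttBaer}, in the same way as Speed's proof of Theorem \ref{TheoremBaerification}.

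\textbf{Step 1: $\beta$ is annihilator compatible.} Suppose $\mathcal{I}^*=\mathcal{J}^*$. By Lemma \ref{LemIdempotentGenerated}, for each $a\in\mathcal{K}$ the annihilator $\beta(a)^*$ is generated by $e_a=(0,\mathbbm{1})_{(a)^*,(a)^{**}}$, which depends only on $(a)^*$. Hence
\begin{equation*}
\beta(\mathcal{I})^*=\bigcap_{a\in\mathcal{I}}(e_a),
\end{equation*}
and, as in the proof of Proposition \ref{PropBaer}, this is generated by the idempotent attached to the annihilator ideal $\bigcap_{a\in\mathcal{I}}(a)^*=\mathcal{I}^*$. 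The same holds for $\mathcal{J}$, so $\beta(\mathcal{I})^*=\beta(\mathcal{J})^*$.

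\textbf{Step 2: construct $\overline{F}$ on each stage $\mathcal{K}_D$.} Fix a finite partition $D=\{\mathcal{D}_1,\dots,\mathcal{D}_n\}$. Since $\mathcal{B}$ is Baer, $F(\mathcal{D}_i^*)^*$ is generated by an idempotent $\epsilon_i\in\mathcal{B}$.
\begin{itemize}
\item[(a)] Use annihilator compatibility of $F$ to show that the $\epsilon_i$ are pairwise orthogonal and satisfy $\bigoplus_i\epsilon_i\simeq\mathbbm{1}_{\mathcal{B}}$.
  \begin{itemize}
  \item[$\circ$] Orthogonality: $\mathcal{D}_i\cap\mathcal{D}_j=0$ gives $\mathcal{D}_i\otimes\mathcal{D}_j=0$, so $\mathcal{D}_j\subseteq\mathcal{D}_i^*$; transport this through $F$.
  \item[$\circ$] Covering: $\Thick(\mathcal{D}_i)^{**}=\mathcal{K}$, so $\Thick(\mathcal{D}_i)^*=\mathcal{K}^*$. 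Compatibility then gives $F(\Thick(\mathcal{D}_i))^*=F(\mathcal{K})^*=0$, since $F(\mathbbm{1})=\mathbbm{1}$. Now argue with idempotent triangles as in Lemma \ref{LemttIdempotents}.
  \end{itemize}
\item[(b)] Show that the composite $\mathcal{K}\xrightarrow{F}\mathcal{B}\xrightarrow{-\otimes\epsilon_i}\epsilon_i\otimes\mathcal{B}$ kills $\Gamma_{\supp(\mathcal{D}_i^*)}\mathcal{K}=\mathcal{D}_i^*$. This holds because $F(\mathcal{D}_i^*)\otimes\epsilon_i\simeq0$ by the choice of $\epsilon_i$. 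The universal property of Verdier localization then gives a factorisation through $L_{\supp(\mathcal{D}_i^*)}\mathcal{K}$.
\item[(c)] Define $\overline{F}_D(x_1,\dots,x_n)=\bigoplus_i\overline{F}_i(x_i)$. This lands in $\mathcal{B}\simeq\prod_i\epsilon_i\otimes\mathcal{B}$, where the splitting comes from the orthogonal idempotents via \cite[Theorem 2.11.]{Bal07}, exactly as in the proof of Theorem \ref{ThmBaer2ttDML}.
\end{itemize}

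\textbf{Step 3: compatibility with refinement.} For $C\leq D$, show $\overline{F}_D\circ F_{C,D}\simeq\overline{F}_C$. This reduces to checking that the idempotent for $\mathcal{C}\in C$ is the sum of the idempotents for the $\mathcal{D}\in D$ contained in it. That follows because $\mathcal{C}^*=\bigcap_{\mathcal{D}\subseteq\mathcal{C}}\mathcal{D}^*$ (Proposition \ref{Prop1ttDML} together with the Boolean algebra structure of Proposition \ref{PropBoolean}), and $F$ respects these annihilator relations. The $\overline{F}_D$ then assemble into $\overline{F}$ on the filtered colimit. Since the trivial partition $\{\mathcal{K}\}$ gives $\mathcal{K}_{\{\mathcal{K}\}}=\mathcal{K}$ with $\epsilon=\mathbbm{1}$, we get $\overline{F}\circ\beta=F$.

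\textbf{Uniqueness.} Any tt-functor $G$ with $G\circ\beta=F$ must send the idempotent $(0,\mathbbm{1})_{\mathcal{D}_i,\mathcal{D}_i^*}$ to a generator of $F(\mathcal{D}_i^*)^*$, since by Lemma \ref{LemIdempotentGenerated} that idempotent is characterised through annihilators of objects in $\beta(\mathcal{K})$. Idempotent generators of a given ideal in a rigid category are unique up to isomorphism \cite{BF11}, so $G(e_i)\simeq\epsilon_i$. By Lemma \ref{LemStandard}, every object of $B(\mathcal{K})$ has the form $\bigoplus_i\beta(k_i)\otimes e_i$, so $G$ is determined on objects.

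\textbf{Expected obstacle.} The hardest part is Step 2(a): deducing $\bigoplus_i\epsilon_i\simeq\mathbbm{1}$ and mutual orthogonality from annihilator compatibility alone. Here I would use the second tt-De Morgan law in $\mathcal{B}$ (Theorem \ref{ThmBaer2ttDML}) to turn intersections of the $F(\mathcal{D}_i)^*$ into thick closures. A secondary concern is that the uniqueness and well-definedness of $\overline{F}$ hold only up to natural isomorphism, so the statement has to be read in that sense.
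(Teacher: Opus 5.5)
Your proposal is correct, and it is organised differently from the paper's proof. The paper works with the normal forms of Lemma \ref{LemStandard}. It sets $\overline{F}(\beta(k))=F(k)$ and sends $(0,\mathbbm{1})_{\I^*,\I^{**}}$ to the idempotent generator of $F(\I)^*$; annihilator compatibility is used only to make this independent of the representative $\I$. It then extends by $\overline{F}\bigl(\bigoplus_i\beta(k_i)\otimes e_i\bigr)=\bigoplus_iF(k_i)\otimes e_{F(\I_i)^*}$ and asserts well-definedness and uniqueness ``on generators''. You instead build $\overline{F}_D$ stage by stage, from the universal property of the Verdier quotients $\K\to\K/\D_i^*$ and the splitting $\mathcal{B}\simeq\prod_i\epsilon_i\otimes\mathcal{B}$, and then check compatibility with the $F_{C,D}$. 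On objects you get the same formula. The difference is that your route actually shows $\overline{F}$ is an exact, unit-preserving monoidal functor that is defined on morphisms and compatible across the colimit. The paper never checks that the images of a partition's idempotents are orthogonal and sum to $\mathbbm{1}$. The price of your route is that everything holds only up to natural isomorphism, as you note.

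A few of your justifications should be replaced, though each gap closes quickly.
\begin{enumerate}
\item Orthogonality. Transporting $\D_j\subseteq\D_i^*$ through $F$ only gives $\epsilon_i\otimes F(\D_j)\simeq0$. Use instead $\epsilon_i\otimes\epsilon_j\in F(\D_i^*)^*\cap F(\D_j^*)^*=F(\Thick(\D_i^*,\D_j^*))^*$, together with $\Thick(\D_i^*,\D_j^*)^*=\D_i\cap\D_j=0=\K^*$ and annihilator compatibility.
\item Covering. No De Morgan law is needed. Since $F(\D_i)\otimes F(\D_i^*)\simeq0$, you have $(\epsilon_i)^*=F(\D_i^*)^{**}\subseteq F(\D_i)^*$. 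So the complement of $\bigoplus_i\epsilon_i$ lies in $\bigcap_iF(\D_i)^*=F(\Thick(\D_1,\dots,\D_n))^*=0$.
\item Uniqueness. A tt-functor $G$ does not preserve annihilators, so you only get $G(e_i)\in F(\D_i^*)^*=(\epsilon_i)$. The isomorphism $G(e_i)\simeq\epsilon_i$ then follows because $\{G(e_i)\}$ and $\{\epsilon_i\}$ are both complete orthogonal families. The same comparison also settles Step 3.
\item Morphisms. $G$ is pinned down on morphisms by the Verdier universal property at each stage, not by Lemma \ref{LemStandard} alone.
\end{enumerate}
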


\begin{proof}
    Our construction defines the Baer tt-category $B(\mathcal{K})$ and the functor $\beta\colon\mathcal{K}\rightarrow B(\mathcal{K})$, and Proposition \ref{PropBaer} shows that $B(\mathcal{K})$ is a Baer tt-category. To see that $\beta$ is annihilator compatible, let $\mathcal{I},\mathcal{J}$ be tt-ideals of $\mathcal{K}$ such that $\mathcal{I}^*=\mathcal{J}^*$. Then $\beta(\mathcal{I})^*=(e)$ for some idempotent object $e\in B(\mathcal{K})$, and $e=(0,\mathbbm{1})_{\mathcal{I}^*,\mathcal{I}^{**}} = (0,\mathbbm{1})_{\mathcal{J}^*,\mathcal{J}^{**}}$. Hence, $\beta(\mathcal{I})^* = \beta(\mathcal{J})^*$.

    Now let $F\colon\mathcal{K}\rightarrow\mathcal{B}$ be an annihilator compatible functor from $\mathcal{K}$ to a Baer tt-category $\mathcal{B}$. We define the functor $\overline{F}\colon B(\mathcal{K})\rightarrow\mathcal{B}$ as follows: for $k\in\mathcal{K}$ we set $\overline{F}(\beta(k)) = F(k)$, and for an idempotent object $e=(0,\mathbbm{1})_{\mathcal{I}^*,\mathcal{I}^{**}}\in B(\K)$ we set $\overline{F}(e) = e_{F(\mathcal{I})^*}$, the idempotent generator of $F(\mathcal{I})^*$ in $\mathcal{B}$. Then for every object $x\in B(\mathcal{K})$ we have
    \begin{equation*}
        \overline{F}(x) = \bigoplus\limits_{1\leq i\leq n}(F(k_i)\otimes e_{F(\mathcal{I}_i)^*}).
    \end{equation*}
    Hence, $\overline{F}$ is a well-defined functor such that $\overline{F}\circ\beta=F$, $\overline{F}$ is a Baer functor since $F$ was assumed to be annihilator compatible, and $\overline{F}$ is unique since it is defined on the generators of the objects of $B(\mathcal{K})$.

    Finally, the pair $(B(\mathcal{K}),\beta)$ must be unique since we have constructed a left adjoint to the forgetful functor from Baer tt-categories to essentially small rigid tt-categories. To see this, let $B\colon\mathsf{2CAlg^{rig}}\rightarrow \mathsf{2CBaer}$ denote the previously constructed Baerification functor from the category of essentially small rigid tt-categories and functors between them, to the category of Baer tt-categories and Baer functors, and let $U\colon\mathsf{2CBaer}\rightarrow\mathsf{2CAlg^{rig}}$ denote the forgetful functor from Baer tt-categories to essentially small rigid tt-categories. Then the unit and counit of the adjunction are given by $\eta\colon1_{\mathsf{2CAlg^{rig}}}\rightarrow UB$ and $\epsilon\colon BU\rightarrow 1_{\mathsf{2CBaer}}$ respectively, so 
    \begin{align*}
        &\eta_\mathcal{K} = \beta\colon\mathcal{K}\rightarrow B(\mathcal{K}) ~ \text{ for all } ~ \mathcal{K}\in\mathsf{2CAlg^{rig}},\\ 
        \text{and } &\epsilon_\mathcal{K} = 1_{\mathsf{2CBaer}}\colon\mathcal{K}\rightarrow \mathcal{K} ~ \text{ for all } ~ \mathcal{K}\in\mathsf{2CBaer}. 
    \end{align*}
    To complete the proof, one can see that these natural transformations force the following compositions to be the identity morphisms on $B$ and $U$ respectively:
    \begin{align*}
        &B\xrightarrow{B\eta}BUB\xrightarrow{\epsilon B}B,\\
        &U\xrightarrow{\eta U}UBU\xrightarrow{U\epsilon}U.
    \end{align*}
\end{proof}

The existence of the isomorphism $A(\mathcal{K})\cong\Omega^{\mathrm{reg}}(\mathrm{Spc}(\mathcal{K})^\vee)$ hints at a connection between the construction of Rump and our Baerification construction. The next result will show this connection explicitly. We first require a lemma which gives an important property of $\SpcKinv$.

\begin{lemma}\label{LemmattClosure}
    Let $\mathcal{K}$ be an essentially small tt-category with no nilpotent objects and let $\mathcal{I}$ be a tt-ideal of $\mathcal{K}$. Then the closure of $\supp(\mathcal{I})$ in $\SpcKinv$ is $\overline{\supp(\mathcal{I})}=U(\mathcal{I}^*)$.
\end{lemma}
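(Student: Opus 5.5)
We prove the two inclusions $\overline{\supp(\I)}\subseteq U(\I^*)$ and $U(\I^*)\subseteq\overline{\supp(\I)}$ separately, working in $\SpcKinv$. There the open sets are exactly the supports $\supp(\J)$ of tt-ideals, so the closed sets are exactly the sets $U(\J)=\{\P\mid \J\subseteq\P\}$. Throughout we use $\supp(x\otimes\J)=\supp(x)\cap\supp(\J)$ and the standard fact, valid when $\K$ has no nilpotent objects, that an object with empty support is zero. This is the same toolkit as in the proof of Lemma \ref{LemInteriors}.

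\textbf{Step 1: $\overline{\supp(\I)}\subseteq U(\I^*)$.} Since $U(\I^*)$ is closed, it suffices to show $\supp(\I)\subseteq U(\I^*)$. Let $\P\in\supp(\I)$, so there is $i\in\I$ with $i\notin\P$. For any $x\in\I^*$ we have $x\otimes i\simeq0\in\P$. Primality of $\P$ then forces $x\in\P$, because $i\notin\P$. Hence $\I^*\subseteq\P$, that is, $\P\in U(\I^*)$.

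\textbf{Step 2: $U(\I^*)\subseteq\overline{\supp(\I)}$.} Write $\overline{\supp(\I)}=U(\J)$ for some tt-ideal $\J$; this is possible because every closed set in $\SpcKinv$ has this form. It is enough to show $\J\subseteq\I^*$, since then any $\P\supseteq\I^*$ also satisfies $\P\supseteq\J$. Let $x\in\J$. Then $\supp(x)\subseteq\supp(\J)$, and $\supp(\J)$ is the complement of $U(\J)$, so it is disjoint from $\supp(\I)$. Therefore $\supp(x\otimes\I)=\supp(x)\cap\supp(\I)=\varnothing$. By the vanishing criterion, $x\otimes\I\simeq0$, so $x\in\I^*$.

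An alternative route for Step 2 goes through $O_{\supp(\I)}$. In the proof of Lemma \ref{LemAnniffOA} it is shown that $O_{\supp(\I)}=\I^*$. Together with the identity $U(O_A)=\overline{A}$ used in the subsequent lemma, this gives $\overline{\supp(\I)}=U(O_{\supp(\I)})=U(\I^*)$ directly. For a self-contained argument, however, I would use the direct version above.

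The only delicate point is the vanishing step: deducing $x\otimes\I\simeq 0$ from empty support, objectwise over $\I$. This is exactly where the hypothesis of no nilpotent objects enters: an object with empty support is $\otimes$-nilpotent, hence zero. The same step is already used in Lemmas \ref{LemInteriors} and \ref{LemAnniffOA}, so I expect no real obstacle beyond citing it.
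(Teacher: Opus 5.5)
Your proof is correct and follows the paper's argument. Step 1 is identical. Your Step 2, writing $\overline{\supp(\I)}=U(\J)$ and showing $\J\subseteq\I^*$, is the contrapositive of the paper's check that every open neighbourhood $\supp(\J)$ of a point of $U(\I^*)$ meets $\supp(\I)$; both rest on the fact that $\supp(\I)\cap\supp(\J)=\varnothing$ forces $\I\otimes\J\simeq 0$ when $\K$ has no nilpotent objects.
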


\begin{proof}
    Let $\mathcal{P}\in\supp(\mathcal{I})$. Then $\mathcal{I}^*\subseteq P$ since $\mathcal{I}\otimes \mathcal{I}^*\simeq0\in \mathcal{P}$. Hence, $\supp(\mathcal{I})\subseteq U(\mathcal{I}^*)$ and therefore $\overline{\supp(\mathcal{I})}\subseteq U(\mathcal{I}^*)$.

    Conversely, let $\mathcal{P}\in U(\mathcal{I}^*)$ and let $N$ be an open neighbourhood of $\mathcal{P}$, so $N$ is of the form $\supp(\mathcal{J})$ for some tt-ideal $\mathcal{J}\subseteq\mathcal{K}$. $\mathcal{J}\nsubseteq\mathcal{P}$ so $\mathcal{J}\nsubseteq\mathcal{I}^*$ and hence, $\mathcal{I}\otimes\mathcal{J}\not\simeq0$. Then, $0\not\simeq \mathcal{I}\otimes\mathcal{J}\subseteq\sqrt{\mathcal{I}\otimes\mathcal{J}}$, therefore,
    \begin{equation*}
        \supp(\mathcal{I})\cap \supp(\mathcal{J})=\supp(\mathcal{I}\otimes\mathcal{J})=\supp(\sqrt{\mathcal{I}\otimes\mathcal{J}})\neq \supp(0)=\varnothing.
    \end{equation*}
    Every neighbourhood of $\mathcal{P}$ thus intersects with $\supp(\mathcal{I})$, i.e., $\mathcal{P}\in\overline{\supp(\mathcal{I})}$ and hence, $U(\mathcal{I}^*)\subseteq\overline{\supp(\mathcal{I})}$ as required.
\end{proof}

\begin{proposition}\label{PropositionBaerRump}
    Let $\K$ be an essentially small rigid tt-category and let $B(\K)$ denote its Baerification given by Construction \ref{ConstructionttBaer}. Then the Hochster dual of the Balmer spectrum of $B(\K)$ is homeomorphic to the absolute of the Hochster dual of the Balmer spectrum of $\K$, given by Construction \ref{ConstructionRump}.
\end{proposition}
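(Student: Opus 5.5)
The plan is to compute $\mathrm{Spc}(B(\K))$ directly from the colimit description of $B(\K)$, and then match it, together with its Hochster dual topology, against Rump's inverse limit. There are three steps.

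\emph{Step 1: the spectrum of a filtered colimit.} I would use that the Balmer spectrum turns filtered colimits of essentially small tt-categories into cofiltered limits of spaces. Equivalently, the frame $\ThickK$ sends filtered colimits to filtered colimits of frames, since every object and every morphism of $B(\K)$ already lives in some finite stage $\K_D$. This gives
\begin{equation*}
    \mathrm{Spc}(B(\K)) \cong \lim_{D\in\pi(\K)} \mathrm{Spc}(\K_D),
\end{equation*}
with transition maps $\mathrm{Spc}(F_{C,D})$. I would apply this to the Hochster duals. Hochster duality is a functor on spectral spaces, so it commutes with these limits: cofiltered limits of spectral spaces along spectral maps are spectral, and the constructible topology commutes with the limit.

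\emph{Step 2: identify each finite stage.} The spectrum of a finite product of tt-categories is the disjoint union of the spectra. For an open $U=\supp(\D^*)$ of $\SpcKinv$, the localization $L_U\K$ has spectrum the complement of the corresponding Thomason set. In the dual topology this is the closed set $U(\D^*)$. By Lemma \ref{LemmattClosure}, and since $\D=\D^{**}$, we have $U(\D^*)=\overline{\supp(\D)}$. So
\begin{equation*}
    \mathrm{Spc}(\K_D)^\vee \cong \coprod_{\D\in D} \overline{\supp(\D)}.
\end{equation*}
Here the closure carries the subspace topology from $\SpcKinv$; one should double-check that the spectrum of the Verdier quotient $L_U\K$ carries exactly the subspace topology on its Hochster dual. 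Next I would use Proposition \ref{PropositionBooleanIso}, under which $\D\mapsto\supp(\D)$ corresponds to $U(\D)^\circ$, a regular open set. Since $\D^*\cap\D=0$, the set $\supp(\D)$ is dense in $U(\D^*)$. Under this isomorphism the partitions in $\pi(\K)$ are exactly Rump's $\Phi(\SpcKinv)$:
\begin{itemize}
    \item pairwise intersection $(0)$ translates to pairwise disjoint regular opens, via the meet in the Boolean algebra;
    \item $\Thick(\D\mid\D\in D)^{**}=\K$ translates to the union being dense.
\end{itemize}
The refinement orders also correspond. The order reversal in the paper's convention for $\Omega^{\mathrm{reg}}$ needs care here, but refinement is preserved.

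\emph{Step 3: compatibility of the transition maps.} I need $\mathrm{Spc}(F_{C,D})^\vee$ to be Rump's inclusion $\coprod_{U\in D}\overline U\to\coprod_{V\in C}\overline V$. On each factor $L_{\supp(\mathcal{C}^*)}\K\to L_{\supp(\D^*)}\K$ the functor is a further localization. On spectra it therefore induces the inclusion of closed subsets $U(\D^*)\subseteq U(\mathcal{C}^*)$, which holds because $\D\subseteq\mathcal{C}$. Taking the limit then gives $\mathrm{Spc}(B(\K))^\vee\cong\breve{X}$ for $X=\SpcKinv$.

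\emph{Main obstacle.} I expect Step 1 to be the hard part: continuity of $\mathrm{Spc}$ for filtered colimits, and the fact that Hochster duality commutes with the limit. Rump's limit is taken in all topological spaces, so I must check that the topological limit of these spectral spaces agrees with the spectral limit. My first attempt would be to argue at the level of frames. I would show that the frame of radical tt-ideals of $B(\K)$ is the colimit of the frames of radical tt-ideals of the $\K_D$, using that each object comes from some finite stage and that the transition functors are localizations. Then I would apply Stone duality, which turns colimits of spatial frames into limits of sober spaces, and compare the result with the limit in $\mathsf{Top}$.
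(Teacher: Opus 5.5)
Your proposal follows the paper's proof step for step: Gallauer's continuity of $\mathrm{Spc}$ under filtered colimits, the spectrum of a finite product as a disjoint union, Balmer's identification $\mathrm{Spc}(L_{\supp(\D^*)}\K)^\vee\cong U(\D^*)=\overline{\supp(\D)}$, and Proposition~\ref{PropositionBooleanIso} to match $\pi(\K)$ with $\Phi(\SpcKinv)$. You are also more careful than the paper about the transition maps and about Hochster duality commuting with the cofiltered limit; the latter is harmless, since duality is an automorphism of the category of spectral spaces and cofiltered limits of spectral spaces along spectral maps, taken in $\mathsf{Top}$, are spectral.

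One correction is needed. Proposition~\ref{PropositionBooleanIso} pairs $\D$ with $U(\D)^\circ=\supp(\D^*)$, which is the regular-open complement of $\supp(\D)$, not $\supp(\D)$ itself. The assignment you actually use, $\D\mapsto\supp(\D)=U(\D^*)^\circ$, is that isomorphism precomposed with $\D\mapsto\D^*$. It is this composite that is an inclusion-preserving bijection, and it turns pairwise-zero intersections and $\Thick(\D\mid\D\in D)^{**}=\K$ into disjointness and density.
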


\begin{proof}
    The Hochster dual of the Balmer spectrum of $B(\K)$ is given by
    \begin{equation*}
        \mathrm{Spc}(B(\K))^\vee = \mathrm{Spc}\Bigg(\colim_{\substack{\longrightarrow\\D\in\pi(\K)}}\Bigg(\prod\limits_{\D\in D}L_{\supp(\D^*)}\K\Bigg)\Bigg)^\vee.
    \end{equation*}
    By \cite[Proposition 8.2.]{Gal18}, the Balmer spectrum of the filtered colimit of a family of essentially small tt-categories is homeomorphic to the cofiltered limit of the Balmer spectra of those tt-categories. Hence,
    \begin{equation*}
        \mathrm{Spc}(B(\K))^\vee \cong \lim_{\substack{\longleftarrow\\D\in\pi(\K)}} \mathrm{Spc}\Bigg(\prod\limits_{\D\in D}L_{\supp(\D^*)}\K\Bigg)^\vee.
    \end{equation*}
    Now, by \cite[Example 18.16.]{BCHS23}, the Balmer spectrum of a finite product of essentially small tt-categories is homeomorphic to the disjoint union of the Balmer spectra of those tt-categories. Therefore,
    \begin{equation*}
        \mathrm{Spc}(B(\K))^\vee \cong \lim_{\substack{\longleftarrow\\D\in\pi(\K)}} \coprod\limits_{\D\in D} \mathrm{Spc}(L_{\supp(\D^*)}\K)^\vee.
    \end{equation*}
    Finally, $\mathrm{Spc}(L_{\supp(\D^*)}\K)^\vee$ is homeomorphic to $U(\D^*)$ by \cite[Proposition 3.11]{Bal05}, and $U(\D^*)=\overline{\supp(\D)}$ by Lemma \ref{LemmattClosure}. All together we have that
    \begin{equation*}
        \mathrm{Spc}(B(\K))^\vee \cong \lim_{\substack{\longleftarrow\\D\in\pi(\K)}} \coprod\limits_{\D\in D} \overline{\supp(\D)},
    \end{equation*}
    where $\supp(\D)$ is regular open by Lemmas \ref{LemIntRegOpen} and \ref{LemmattClosure}. An application of Proposition \ref{PropositionBooleanIso}, which says that the Boolean algebra of tt-annihilator ideals is isomorphic to the Boolean algebra of regular open subsets of $\SpcKinv$, gives us
    \begin{equation*}
        \mathrm{Spc}(B(\K))^\vee \cong \lim_{\substack{\longleftarrow\\\A\in\Phi(\mathrm{Spc}(\mathcal{K})^\vee)}}~\coprod\limits_{U\in\A}\overline{U},
    \end{equation*}
    precisely the definition of the absolute of $\SpcKinv$ from Construction \ref{ConstructionRump}.
\end{proof}

We conclude by giving an example which shows the Baerification construction in use, and again highlights the connection with the construction of Rump, specifically the computation in Example \ref{ExAbsSpecZinv}.

\begin{example}\label{ExB(DZ)}
    Let $\mathcal{K}=\DZ$. By Lemma \ref{LemAnniffOA}, the tt-annihilator ideals of $\K$ are of the form $\bigcap_{\mathcal{P}\in A}\mathcal{P}$ for $A$ an open set of $\SpcKinv$. Recall from Example \ref{ExDZ} that an open set of $\SpcKinv$ is a union of prime ideals $\P_p=\{k\in\K~|~k\otimes\mathbb{F}_p\simeq0\}$ for $p\in\mathbb{Z}$ non-zero and prime. The tt-annihilator ideals of $\K$ are therefore given by the intersection of any collection of prime tt-ideals $\P_p$ of $\K$.

    We will compute the Baerification of $\K$ in two parts, rationally and over primes. That is, given the functor $\beta\colon\K\rightarrow B(\K)$ of the Baerification construction, we need to compute $\beta(\DZp)$ and $\beta(\DQ)$. Here, $\DZp$ denotes the tt-ideal $\bigcap_{q\neq p}\P_q\cong\mathsf{D}^{\mathrm{perf}}(\mathbb{F}_p)$, which can be thought of as the tt-ideal supported at the prime $p$.

    We first compute $\DZp$. Let $\{\D_i~|~1\leq i\leq n\}$ be some finite partition of tt-annihilator ideals of $\K$. Since the $\D_i$ are pairwise disjoint, there is a unique $\D_j$ such that $\mathbb{F}_p\subseteq\D_j$, and hence $\DZp\subseteq\D_j$. We therefore have that $\DZp\subseteq\D_i^*$ for every $i\neq j$. This gives rise to the following commutative diagram,

\begin{center}
\begin{tikzcd}
\DZp \arrow[rr] \arrow[dd, hook] &  & L_{\supp(\D_j^*)}\K \arrow[dd, hook]            \\
                                 &  &                                                 \\
\K \arrow[rr]                    &  & \prod\limits_{1\leq i\leq n}L_{\supp(\D_i^*)}\K
\end{tikzcd}
\end{center}

which commutes since $\DZp$ is zero in all of the localizations $L_{\supp(\D_i^*)}\K$ except when $i=j$. As a consequence, $\DZp$ embeds fully faithfully into $L_{\supp(\D_j^*)}\K$. Since $\{\D_i~|~1\leq i\leq n\}$ was an arbitrary partition of $\K$, we have that $\DZp$ embeds fully faithfully via $\beta$ into $B(\K)$ as a copy of $\DZp\cong\mathsf{D}^{\mathrm{perf}}(\mathbb{F}_p)$.

Now looking at $\DQ$, since each tt-annihilator ideal $\D$ of $\K$ is an intersection of prime tt-ideals $\P_p$, we have $\D\subseteq\P_0$ for all $\D\in A(\K)$. Therefore, since $\DQ=L_{\supp(\P_0)}\K$, $\DQ$ is contained in every localization of $\K$ by a tt-annihilator ideal. Hence, for a given partition $\{\D_i~|~1\leq i\leq n\}$, $\prod_{1\leq i\leq n}L_{\supp(\D_i^*)}\K$ will contain $n$ copies of $\DQ$. Taking the filtered colimit over all finite partitions we conclude that
\begin{equation*}
    \beta(\DQ) = \prod\limits_\mathbb{N}\DQ = \mathsf{D}^{\mathrm{perf}}\Big(\prod\limits_\mathbb{N}\mathbb{Q}\Big).
\end{equation*}

One can verify that $\mathrm{Spc}(B(\K))^\vee$ is homeomorphic to the absolute of $\SpecZ^\vee$ as computed in Example \ref{ExAbsSpecZinv}, with $\mathsf{D}^{\mathrm{perf}}(\mathbb{F}_p)$ corresponding to the generic points $\P_p$, and $\mathsf{D}^{\mathrm{perf}}\Big(\prod\limits_\mathbb{N}\mathbb{Q}\Big)$ corresponding to the copy of $\beta\mathbb{N}$.
\end{example}

Recall Balmer's Nerves of Steel conjecture, which claims that the Balmer spectrum and the homological spectrum of a given tt-category are always homeomorphic (see \cite{Bal20} for more details). This conjecture has since been disproven in \cite{BHR26}, but it is interesting nonetheless to observe that the above Baerification construction preserves the property of a tt-category satisfying the Nerves of Steel conjecture.

\begin{definition}
    (\cite[Definition 1.1.]{BCHS24}) Let $F\colon\mathcal{C}\rightarrow\mathcal{D}$ be a functor and let $f\colon X\rightarrow Y$ be a morphism in $\mathcal{C}$. Then $F$ is \textbf{nil-conservative} if $F(f)=0$ in $\mathcal{D}$ implies that there is some $n\in\mathbb{N}$ such that $f^{\otimes n}=0$ in $\mathcal{C}$.
\end{definition}

\begin{proposition}
    Let $\mathcal{K}$ be an essentially small rigid tt-category which satisfies the Nerves of Steel conjecture. Then $B(\mathcal{K})$ also satisfies the Nerves of Steel conjecture.
\end{proposition}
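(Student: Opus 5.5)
The plan is to use Balmer's homological spectrum and its comparison map $\phi\colon\mathrm{Spc}^h(\K)\to\SpcK$. The Nerves of Steel property for $\K$ says exactly that $\phi$ is injective; it is always surjective for rigid $\K$. I will show that $B(\K)$ has the same property by writing it as a filtered colimit of finite products of localizations, as in Construction \ref{ConstructionttBaer}, and checking that each operation preserves injectivity of $\phi$. The pattern follows the proof of Proposition \ref{PropositionBaerRump}, with $\mathrm{Spc}^h$ in place of $\mathrm{Spc}$.

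First, I will check that each $L_{\supp(\D^*)}\K$ satisfies Nerves of Steel. For a Verdier quotient $q\colon\K\to\K/\J$, Balmer identifies $\mathrm{Spc}^h(\K/\J)$ with the subspace of homological primes of $\K$ whose images under $\phi$ contain $\J$ \cite{Bal20}. This identification is compatible with $\phi$ and with the inclusion $\mathrm{Spc}(\K/\J)\cong U(\J)\subseteq\SpcK$, so the comparison map for the quotient is a restriction of an injective map. Next, for a finite product $\prod_i\K_i$, both spectra are disjoint unions of the factors' spectra, by \cite[Example 18.16.]{BCHS23} and its homological analogue, since idempotents split the module category. Hence $\phi$ is the disjoint union of the individual maps and stays injective. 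Together these show that each $\K_D=\prod_{\D\in D}L_{\supp(\D^*)}\K$ satisfies Nerves of Steel.

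The main step, and the one I expect to be the obstacle, is passing to the filtered colimit. On the Balmer side, \cite[Proposition 8.2.]{Gal18} gives $\mathrm{Spc}(B(\K))\cong\lim_D\mathrm{Spc}(\K_D)$. For the homological spectrum I would prove the analogous statement $\mathrm{Spc}^h(\colim\K_D)\cong\lim_D\mathrm{Spc}^h(\K_D)$, or at least that two homological primes of $B(\K)$ agreeing on every $\K_D$ are equal. I would approach this through maximal Serre tensor ideals of $\mathrm{mod}\text{-}B(\K)$. Because $B(\K)$ is a filtered colimit, every finitely presented module, and every morphism between such modules, is pulled back from some stage $\K_D$. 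A maximal Serre ideal $\mathcal{B}$ of $B(\K)$ is therefore determined by its restrictions $\mathcal{B}_D$ to the stages. Each $\mathcal{B}_D$ is a homological prime of $\K_D$, being the pullback along a tt-functor, which is allowed since pullbacks of homological primes are homological primes. The nil-conservativity notion just defined enters here: I would use it to show that the transition functors $F_{C,D}$, restricted to the relevant pieces, detect whether modules vanish. This guarantees that restriction commutes with the transition maps and that the pulled-back ideals are again maximal.

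Finally I would assemble injectivity. Let $\mathcal{B},\mathcal{B}'$ be homological primes of $B(\K)$ with $\phi(\mathcal{B})=\phi(\mathcal{B}')$. Their images in each $\mathrm{Spc}(\K_D)$ agree, since $\phi$ is natural with respect to tt-functors. By injectivity at each stage, $\mathcal{B}_D=\mathcal{B}'_D$ for all $D$, and by the colimit step $\mathcal{B}=\mathcal{B}'$. So $\phi$ is injective for $B(\K)$. Since $B(\K)$ is rigid, $\phi$ is also surjective, and hence a bijection, which is the Nerves of Steel property for $B(\K)$.
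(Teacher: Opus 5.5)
Your decomposition into finite localizations, finite products and a filtered colimit is the same as the paper's, and the first two steps are fine. For products, splitting $\mathrm{mod}\text{-}(\K_1\times\K_2)\simeq\mathrm{mod}\text{-}\K_1\times\mathrm{mod}\text{-}\K_2$ is a direct substitute for the paper's use of joint nil-conservativity of the projections and \cite[Proposition 9.2.]{BHSZ26}.

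The gap is in the colimit step. Write $F_D\colon\K_D\to B(\K)$ for the canonical functor. You claim that $\mathcal{B}_D:=\hat{F}_D^{-1}(\mathcal{B})$ is a homological prime of $\K_D$ because ``pullbacks of homological primes are homological primes''. That is not a fact you can quote. Balmer's functoriality of $\mathrm{Spc}^h$ only shows that $\hat{F}^{-1}(\mathcal{B})$ is contained in a \emph{unique} maximal Serre $\otimes$-ideal, and defines $\mathrm{Spc}^h(F)(\mathcal{B})$ to be that ideal. Maximality of the preimage itself is not known in general.

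So Nerves of Steel for $\K_D$, together with naturality of $\phi$, only gives $\mathrm{Spc}^h(F_D)(\mathcal{B})=\mathrm{Spc}^h(F_D)(\mathcal{B}')$. That says $\mathcal{B}_D$ and $\mathcal{B}'_D$ lie in a common homological prime, not that $\mathcal{B}_D=\mathcal{B}'_D$. Hence the observation that $\mathcal{B}$ is determined by $(\mathcal{B}_D)_D$ does not finish the argument. Invoking nil-conservativity of the transition functors does not fill this either. Nil-conservativity controls surjectivity on homological spectra (the paper uses it, trivially, for the projections out of a finite product), not maximality of preimages, and your proposal does not specify how it would be used.

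The paper handles this step by citing \cite[Theorem 2.26.]{BBB24}, which identifies the homological spectrum of a filtered colimit with the limit of the homological spectra. Combined with \cite[Proposition 8.2.]{Gal18}, the comparison map of $B(\K)$ is then a limit of injections, hence injective.

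Alternatively, your argument can be completed using only the weaker containment you actually have:
\begin{enumerate}
\item Suppose $\mathcal{B}\neq\mathcal{B}'$. By maximality, $\hat{\mathbbm{1}}$ lies in the Serre $\otimes$-ideal generated by $\mathcal{B}\cup\mathcal{B}'$.
\item This membership is witnessed by finitely many objects, morphisms and exact sequences of $\mathrm{mod}\text{-}B(\K)$. After enlarging $D$, these are all realised in $\mathrm{mod}\text{-}\K_D$, with the generators lying in $\mathcal{B}_D\cup\mathcal{B}'_D$.
\item Then $\hat{\mathbbm{1}}$ lies in the Serre $\otimes$-ideal generated by $\mathcal{B}_D\cup\mathcal{B}'_D$. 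But that ideal is contained in the proper ideal $\mathrm{Spc}^h(F_D)(\mathcal{B})=\mathrm{Spc}^h(F_D)(\mathcal{B}')$, a contradiction.
\end{enumerate}
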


\begin{proof}
    By \cite[Corollary 6.15.]{BHSZ26}, if $\mathcal{K}$ satisfies the Nerves of Steel conjecture so to does every finite localization of $\mathcal{K}$. Now consider $\prod_{\mathcal{D}\in D}L_{\supp(\mathcal{D}^*)}\mathcal{K}$ for some partition $D\subseteq A(\mathcal{K})$. The family of projection functors
    \begin{equation*}
        (\pi_\mathcal{E})_{\mathcal{E}\in D}\colon\prod\limits_{\mathcal{D}\in D}L_{\supp(\mathcal{D}^*)}\mathcal{K} \rightarrow L_{\supp(\mathcal{E}^*)}\mathcal{K}
    \end{equation*}
    are jointly nil-conservative, since a morphism being zero in all of the localizations $L_{\supp(\mathcal{D}^*)}\mathcal{K}$ implies that it is zero in the product $\prod_{\mathcal{D}\in D}L_{\supp(\mathcal{D}^*)}\mathcal{K}$. Also, each localization $L_{\supp(\mathcal{D}^*)}\mathcal{K}$ satisfies the Nerves of Steel conjecture, and the induced map
    \begin{equation*}
        \phi\colon\coprod\limits_{\mathcal{D}\in D}\mathrm{Spc}(L_{\supp(\mathcal{D}^*)}\mathcal{K}) \rightarrow \mathrm{Spc}(\prod\limits_{\mathcal{D}\in D}L_{\supp(\mathcal{D}^*)}\mathcal{K})
    \end{equation*}
    is injective (in fact bijective). Hence, by \cite[Proposition 9.2.]{BHSZ26}, $\prod_{\mathcal{D}\in D}L_{\supp(\mathcal{D}^*)}\mathcal{K}$ satisfies the Nerves of Steel conjecture for all partitions $D\subseteq A(\mathcal{K})$. Finally, as a consequence of \cite[Theorem 2.26.]{BBB24} and \cite[Proposition 8.2.]{Gal18}, the filtered colimit of a family of tt-categories which satisfy the Nerves of Steel conjecture will do likewise, which shows that $B(\mathcal{K})$ satisfies the Nerves of Steel conjecture.
\end{proof}

\begin{remark}
    As a consequence of this result, the Baerification of $\DZ$ which was computed in Example \ref{ExB(DZ)} must necessarily satisfy the Nerves of Steel conjecture.
\end{remark}

\printbibliography
\end{document}